\pgfplotsset{compat=1.18}
\pgfplotsset{
    cycle list/Set1,
}
\newcommand{\ic}{{\rm i}}
\DeclareMathOperator{\rank}{rank}
\DeclareMathOperator{\Tr}{Tr}
\DeclareMathOperator{\DIV}{div}
\newcommand{\sDIV}{\nabla^{\perp}\cdot}
\newcommand{\grad}{\nabla}
\newcommand{\sgrad}{\grad^{\perp}}
\DeclareMathOperator{\curl}{curl}
\DeclareMathOperator{\Diff}{Diff}
\DeclareMathOperator{\St}{St}
\DeclareMathOperator{\iter}{it}
\DeclareMathOperator{\rf}{ref}
\DeclareMathOperator{\vc}{vec}
\DeclareMathOperator{\cay}{cay}
\DeclareMathOperator{\Iso}{\texttt{Iso-2}}
\DeclareMathOperator{\ad}{ad}
\DeclareMathOperator{\Ad}{Ad}
\DeclareMathOperator{\SU}{SU}
\DeclareMathOperator{\U}{U}
\newcommand{\bra}[2]{\ensuremath{\left\{#1,#2\right\}}}
\newcommand{\Stwo}{\mathbb{S}^2}
\newcommand{\fru}{\mathfrak{u}}
\newcommand{\frg}{\mathfrak{g}}
\newcommand{\Rcal}{\ensuremath{\mathcal{R}}}
\newcommand{\Ical}{\ensuremath{\mathcal{I}}}
\newcommand{\Tcal}{\ensuremath{\mathcal{T}}}
\newcommand{\Lcal}{\ensuremath{\mathcal{L}}}
\newcommand{\Fcal}{\ensuremath{\mathcal{F}}}
\newcommand{\norm}[1]{\lVert #1 \rVert}
\newcommand{\normF}[1]{\lVert #1 \rVert_F} % Frobenius
\newcommand{\inprodL}[2]{\left<#1,#2\right>_{L^2(\Stwo)}}
\newcommand{\inprodF}[2]{\left<#1,#2\right>_F} % Frobenius
\newcommand{\inprodN}[2]{\left<#1,#2\right>_{F,N}} % scaled Frobenius
\newcommand{\ubf}{\ensuremath{\bm{u}}}
\newcommand{\ebf}{\ensuremath{\bm{e}}}
\newcommand{\tind}{\ensuremath{\tau}}
\newcommand{\Ham}{H}
\newcommand{\Mr}{\mathcal{M}_r}
\newcommand{\ap}[1]{\widehat{#1}}
\newcommand{\apT}{\Tcal_{\ap{N}}}
\newcommand{\svd}[1]{\Pi_r(#1)}
\newcommand{\Wsvd}{\svd{W}}
\newcommand{\Wref}{W_{\rf}}
\newcommand{\coo}{\ensuremath{\Psi}}
\newcommand{\dcoo}{\ensuremath{\mathrm{d}\coo}}
\newcommand{\dt}{\ensuremath{\Delta t}}
\newcommand{\exd}{\mathsf{d}}
\newcommand{\contr}{\mathsf{i}} % contraction operator
\numberwithin{equation}{section}
\theoremstyle{plain}
\newtheorem{lemma}{Lemma}[section]
\newtheorem{proposition}{Proposition}[section]
\newtheorem{corollary}{Corollary}[section]
\newtheorem{remark}{Remark}[section]
\newcommand{\email}[1]{\protect\href{mailto:#1}{#1}}
\title{Geometric low-rank approximation of the Zeitlin model of incompressible fluids on the sphere}
\author{Cecilia Pagliantini\thanks{Dipartimento di Matematica,
			  Universit\`a di Pisa, Pisa,
			  Italy.
  (\email{cecilia.pagliantini@unipi.it}).\\
  Funding from the MIUR Excellence Department Project awarded to the Department of Mathematics, University of Pisa, CUP I57G22000700001, and from the INDAM/GNCS 2024 project CUP E53C23001670001 are acknowledged.}
}
\date{}
\begin{document}

\maketitle

% REQUIRED
\begin{abstract}
We consider the vorticity formulation of the Euler equations describing the flow of a two-dimensional incompressible ideal fluid on the sphere. Zeitlin's model provides a finite-dimensional approximation of the vorticity formulation that preserves the underlying geometric structure: it consists of an isospectral Lie–Poisson flow on the Lie algebra of skew-Hermitian matrices. We propose an approximation of Zeitlin's model based on a time-dependent low-rank factorization of the vorticity matrix and evolve a basis of eigenvectors according to the Euler equations. In particular, we show that the approximate flow remains isospectral and Lie--Poisson and that the error in the solution, in the approximation of the Hamiltonian and of the Casimir functions only depends on the approximation of the vorticity matrix at the initial time. The computational complexity of solving the approximate model is shown to scale quadratically with the order of the vorticity matrix and linearly if a further approximation of the stream function is introduced.
\end{abstract}

\section{Introduction}
The motion of inviscid ideal fluids is governed by the Euler equations
which, for incompressible flows, read:
\begin{equation}\label{eq:IE}
\left\{\begin{aligned}
         & \partial_t \ubf +\DIV(\ubf\otimes \ubf) + \nabla p = 0,\\
         & \DIV \ubf = 0,
     \end{aligned}\right.
 \end{equation}
where $\ubf$ represents the velocity of the fluid, and $p$ is the hydrodynamic pressure.
In his pioneering work, Arnold \cite{Arnold66} showed that ideal fluid motions describe geodesics on the Lie group of volume-preserving diffeomorphisms endowed with a right-invariant metric corresponding to kinetic energy.
This result has not only brought to light the geometric structure underlying Euler's equations but it has been used to give rigorous local well-posedness results \cite{EM70}, and to relate the stability of the fluid motion to the sectional curvature of the Riemannian metric \cite{Arnold66, Pre02}, see also \cite[Introduction]{MP24} for further references.
More in general, geodesic motion on a Lie group can be associated with the (basic) \emph{Euler--Poincar\'e equations}, for a purely quadratic Lagrangian, on the corresponding Lie algebra \cite{Sch22}. Other than the incompressible Euler equations \eqref{eq:IE}, many partial differential equations has been shown to fit this framework, although for different infinite-dimensional groups and Riemannian metrics. In the context of hydrodynamics, these models are referred to as \emph{Euler--Arnold equations} and include the Korteweg--de Vries equation, the Camassa--Holm equation, the Landau--Lifshitz equation, the magnetohydrodynamic equations, etc.

When looking at finite-dimensional approximations of the Euler equations, the traditional approach of considering the dynamical variables by Fourier transforming the system, and then truncating at some frequency breaks the geometric structure of the problem, see, e.g., \cite{KSD98}, and leads to unphysical numerical simulations.
To retain as much as possible of the geometric structure of the Euler equations to the finite-dimensional approximation,
numerical methods have been derived in several works \cite{cotter07,pavlov11,gawlik11,GGB21}.

In this work we focus on the two-dimensional incompressible Euler equations on the sphere, relevant for geophysical flows, and leverage the finite-dimensional approximation introduced by Zeitlin \cite{zeitlin91, zeitlin04}. The Zeitlin model is, to the best of our knowledge, the only (spatial) finite-dimensional approximation of the two-dimensional Euler equations, on the torus and on the sphere, that fully adopts Arnold’s geometric description. 
The Zeitlin model consists of an isospectral Lie--Poisson flow for the vorticity matrix on the Lie algebra of skew-Hermitian $N\times N$ matrices.
Local convergence of the solutions of the Zeitlin model to the solutions of the Euler equations, as $N\to\infty$, was first established by Gallagher \cite{G02} and more recently in \cite{FPV22,MV24}. Furthermore, it has been shown in \cite{MP24} that the Zeitlin model also preserves the stable/unstable nature of stationary solutions of the Euler equations.

The fact that the Zeitlin model retains the geometric structure of the Euler equations -- in the sense that it also describes geodesics on a Lie group with a right-invariant Riemannian metric --
results in a coherent approach to the simulation of the qualitative long-time behavior of 2D Euler equations, as it has been recently discovered by Modin and co-authors \cite{MV20b,MV22,cifani2023efficient}. In particular, numerical simulations based on Zeitlin's model have been shown to reproduce the spectral power laws in the inverse energy
cascade \cite{cifani22} and to ensure conservation of Casimir invariants, such as enstrophy, which is critical for 2D turbulence.
The bulk of numerical simulations based on Zeitlin's model rely on a family of numerical time integration schemes, introduced in \cite{MV20}, that preserve the isospectrality and Lie--Poisson structure of the flow. A major bottleneck of this family of time integrators is their computational complexity: the most used second order time integrator of this family scales as $N^3$, even when an efficient $O(N^2)$ computation of the stream function is considered \cite{cifani2023efficient}.

In this work we propose a numerical approximation of the Zeitlin model that preserves the geometric structure of the problem, as derived by Arnold, at a favorable computational complexity. The idea is to perform a time-dependent low-rank factorization of the vorticity matrix and evolve a basis of eigenvectors according to the reconstruction equation of the Euler equations.
In particular, we show that the approximate flow remains isospectral and Lie--Poisson and that the error in the approximation of the Hamiltonian and of the Casimir functions only depends on the approximation of the vorticity matrix at the initial time.
Moreover, we establish a priori error estimates showing that the error, in the Frobenius norm, between the solution of the Zeitlin model and the proposed low-rank approximation is bounded by the truncated singular values of the initial vorticity matrix.
For the numerical time integration of the reconstruction equation, we propose a family of explicit isospectral methods based on Lie groups acting on manifolds. Since these schemes are generally not Lie--Poisson, a second order method based on the implicit midpoint rule is also presented.
The computational complexity of computing the low-rank approximation is shown to scale quadratically with $N$ and linearly if a further approximation of the stream function is introduced.
The proposed method has superior properties in terms of efficiency and accuracy whenever the dynamics has a low-rank structure, for example in the presence of vortex blobs. If this is not the case and the rank of the vorticity matrix equals $N$, then the 
proposed method provides an isospectral, in some cases Lie--Poisson, time integration scheme whose performances are comparable to solving the Zeitlin model with the numerical method of \cite{MV20}.
We also propose an extension of the low-rank approximation to general Euler--Arnold equations characterized by non-isospectral flows. For this alternative approach, a structure-preserving time splitting is introduced to solve the evolution equations for the low-rank factors.

The remainder of the paper is organized as follows.
In \Cref{sec:background} we recall the Zeitlin truncation of the incompressible Euler equations on the sphere and the second order isospectral Lie--Poisson time integrator proposed in \cite{MV20}.
\Cref{sec:lowrank} pertains to the derivation of a low-rank approximation of the Zeitlin model and to the discussion of its geometric properties and convergence results. The resulting approximate dynamics is solved by evolving a basis of eigenvectors, as described in
\Cref{sec:Sfixed}, where a time discretization on the manifold of unitary matrices is presented.
In \Cref{sec:appot} a truncation of the stream matrix is analyzed with the aim of further reducing the computational complexity of the approximate model.
The extension to general Euler--Arnold equations with a factorization of the solution matrix with time-dependent factors is introduced in \Cref{sec:splitting}.
Numerical experiments are discussed in \Cref{sec:numexp}.
\Cref{sec:concl} presents some concluding remarks.

%%%%%%%%%%%%%%%%%%%%%%%%%%%%%%%%%%%%%%%%%%%%%
\section{Background}
\label{sec:background}

In this section we recall the vorticity formulation of the 2D incompressible Euler equations and its characterization as an infinite-dimensional Lie--Poisson system on the space of smooth zero-mean functions. We then present the approximation given by the Zeitlin model, a finite-dimensional Lie--Poisson system on the dual of the Lie algebra of traceless skew-Hermitian matrices. The content of the next two sections is largely based on \cite{MR13,MV24}.

\subsection{Vorticity formulation of the 2D incompressible Euler equations}

Let $\Ical:=(0,T]\subset\mathbb{R}$ be a given temporal interval, with $T\in\mathbb{R}_+$.
We consider as spatial domain the unit sphere $\Stwo\subset\mathbb{R}^3$.
Let $\omega:\Ical\times \Stwo\rightarrow\mathbb{R}$ be the vorticity defined as
$\omega=\curl \ubf=\sDIV \ubf$ where $\sDIV$ denotes the skew-divergence. Problem \eqref{eq:IE} in the vorticity variable reads
$\partial_t \omega=-\ubf\cdot\grad\omega$.
Introducing the stream function $\psi:\Ical\times \Stwo\rightarrow\mathbb{R}$ as the skew-gradient of the velocity,
%$\curl \psi=
$\sgrad\psi= \ubf$, the vorticity satisfies
\begin{equation}\label{eq:IEvort}
    \left\{\begin{aligned}
        &\partial_t{\omega}=-\grad\omega\cdot\sgrad \psi& \mbox{in }\,\Ical\times\Stwo,\\
        &\Delta\psi = \omega & \mbox{in }\,\Ical\times\Stwo,\\
    \end{aligned}\right.
\end{equation}
where the stream function $\psi$ is related to the vorticity via the Laplace--Beltrami operator $\Delta$.
We remark that, in the inclination-azimuthal spherical coordinates $(\theta,\phi)\in [0,\pi]\times [0,2\pi)$, the gradient and skew-gradient of a function $f(\theta,\phi)\in C^\infty(\Stwo)$, and the skew-divergence of a vector-valued function
$\ubf=u_\theta(\theta,\phi)\ebf_{\theta} + u_\phi(\theta,\phi)\ebf_{\phi}\in C^\infty(\Stwo,\mathbb{R}^2)$ are given by
\begin{equation*}
\begin{aligned}
\grad f(\theta,\phi) &= \dfrac{\partial f}{\partial \theta}\ebf_{\theta} + \dfrac{1}{\sin\theta}\dfrac{\partial f}{\partial \phi}\ebf_{\phi},\\
\sgrad f(\theta,\phi) &= -\dfrac{1}{\sin\theta}\dfrac{\partial f}{\partial \phi}\ebf_{\theta} + \dfrac{\partial f}{\partial \theta}\ebf_{\phi},\\
\sDIV\ubf(\theta,\phi) &= -\dfrac{1}{\sin\theta}\dfrac{\partial u_\theta}{\partial \phi} + \dfrac{1}{\sin\theta}\dfrac{\partial(u_\phi\sin\theta)}{\partial \theta}.
\end{aligned}
\end{equation*}
where $\ebf_{\theta}$ and $\ebf_{\phi}$ are the unit vectors in the inclination/colatitudinal direction and azimuth/longitudinal direction, respectively.

The unit sphere $\Stwo$ is a symplectic manifold with the symplectic form given by the spherical area $\mu$, which,
in spherical coordinates,
%in the inclination-azimuthal spherical coordinates $(\theta,\phi)\in [0,\pi]\times [0,2\pi)$,
reads $\mu=\sin\theta \exd\theta\wedge\exd\phi$.
Let $\mathbb{J}:T\Stwo\to T\Stwo$ represents the rotation by $\pi/2$ in the positive direction.
%so that $\sgrad=-\mathbb{J}\nabla$.
Given a function $f$ on $\Stwo$, the corresponding Hamiltonian vector field $X_f$ is defined by
$\contr_{X_f}\mu=\exd f$.
By identifying 1-forms with vector fields via the Riemannian structure of $\Stwo$, one can write $\mathbb{J}X_f=\nabla f$. Since the velocity field $\ubf$ of the Euler flow satisfies $\ubf=\sgrad\psi=-\mathbb{J}\nabla\psi$, it is the Hamiltonian vector field for %the Hamiltonian given by
the stream function $\psi$, i.e., $\ubf=X_{\psi}$.
The space $\mathcal{X}_{\mu}(\Stwo)$ of smooth Hamiltonian vector fields on $\Stwo$ forms an infinite-dimensional Lie algebra with the vector field bracket 
$[X_{\psi},X_{\xi}]_{\mathcal{X}}=-X_{\psi}\cdot \mathbb{J} X_{\xi}$.
%$[\cdot,\cdot]_{\mathcal{X}}$.
This algebra is isomorphic to the Poisson algebra of smooth functions modulo constants $C^{\infty}(\Stwo)/\mathbb{R}$ via the mapping $\psi\mapsto X_{\psi}$ since
$-[X_{\psi},X_{\xi}]_{\mathcal{X}}=X_{\bra{\psi}{\xi}}$
where $\bra{\psi}{\xi}=\sgrad\psi\cdot\grad\xi$.

The Euler equations \eqref{eq:IEvort} can be characterized as
a Hamiltonian system on the dual of
the Lie algebra $\mathfrak{g}:=C^{\infty}(\Stwo)/\mathbb{R}$. More in details, the smooth dual of $\mathfrak{g}$ can be constructed so that $C^{\infty}(\Stwo)^*\simeq C^{\infty}(\Stwo)$ via the $L^2$-pairing
$$\langle\omega,\psi\rangle_{L^2(\Stwo)}:=\int_{\Stwo} \omega\psi\,\mu.$$
The vorticity can then be thought of as the dual variable to the stream function $\psi\in\mathfrak{g}$:
$$\omega\in\mathfrak{g}^*=\big(C^{\infty}(\Stwo)/\mathbb{R}\big)^*\simeq C_0^{\infty}(\Stwo)=\left\{f\in C^{\infty}(\Stwo)\,:\,\int_{\Stwo} f\,\mu = 0\right\}.$$
The Hamiltonian system on $\mathfrak{g}^*=C_0^{\infty}(\Stwo)$ for a Hamiltonian $H:\mathfrak{g}^*\to\mathbb{R}$ is given by
\begin{equation}\label{eq:LP}
\partial_t \omega +\ad^*_{\delta_{\omega}H}\omega=0,
\end{equation}
where $\ad^*_f:\mathfrak{g}^*\to\mathfrak{g}^*$, $f\in\mathfrak{g}$,
is the representation of the Lie algebra $\mathfrak{g}$ on 
$\mathfrak{g}^*$ defined as
$\langle\ad^*_f\omega,\xi\rangle_{L^2(\Stwo)}=\langle\omega,\bra{f}{\xi}\rangle_{L^2(\Stwo)}$ for any $\xi\in\mathfrak{g}$.
By the divergence theorem, it holds
$$\langle\omega,\bra{f}{\xi}\rangle_{L^2(\Stwo)}=
\int_{\Stwo}\omega\sgrad f\cdot\grad \xi\, \mu=
\langle -\nabla\cdot(\omega\sgrad f),\xi\rangle_{L^2(\Stwo)}=
\langle-\bra{f}{\omega},\xi\rangle_{L^2(\Stwo)}.$$
Hence, $\ad^*_f\omega=-\bra{f}{\omega}$ and being minus the Poisson bracket, it reflects the fact that the $L^2$ pairing on $C^{\infty}(\Stwo)$ is bi-invariant.

Equation \eqref{eq:IEvort} can then be written as the infinite-dimensional Lie--Poisson system
\begin{equation}\label{eq:IELieP}
    \left\{\begin{aligned}
        &\partial_t{\omega}+\bra{\psi}{\omega}=0& \mbox{in }\,\Ical\times\Stwo,\\
        &\Delta\psi = \omega & \mbox{in }\,\Ical\times\Stwo,\\
    \end{aligned}\right.
\end{equation}
with Hamiltonian given by the kinetic energy
\begin{equation*}
    H(\omega)=\frac12 \int_{\Stwo} |\ubf|^2\, \mu=-\frac12 \int_{\Stwo} \psi\,\omega\,\mu = -\frac12 \langle\psi,\omega\rangle_{L^2(\Stwo)},\qquad \delta_{\omega} H(\omega)=-\psi,
\end{equation*}
and where the differential operator $\Delta$ can be thought of as an isomorphism between $\mathfrak{g}=C^{\infty}(\Stwo)/\mathbb{R}$ and its dual $\mathfrak{g}^*=C_0^{\infty}(\Stwo)$.

Since
%Equation \eqref{eq:IEvort} entails that
the vorticity function $\omega$ is infinitesimally transported by the time-dependent Hamiltonian vector field $\ubf=X_{\psi}$, the flow map $\Phi:\Stwo\times\mathbb{R}\to\Stwo$ generates curves $t\mapsto \Phi(\cdot,t)$ in the Lie group corresponding to the Lie algebra $\mathfrak{g}$ and given by the set $\Diff_{\mu}(\Stwo)$ of symplectic diffeomorphisms of $\Stwo$. Thus, any smooth solution $\omega$
%\in C^{\infty}(\Stwo)$
of \eqref{eq:IELieP}, with initial condition $\omega_0\in\mathfrak{g}^*$, evolves on the co-adjoint orbit
$$\mathcal{O}(\omega_0)=\{\Ad^*_{\Phi^{-1}}\omega_0=\Phi_*\omega_0\,:\,\Phi\in\Diff_{\mu}(\Stwo)\},$$
where the co-adjoint action $\Ad^*_{\Phi^{-1}}$
of $G=\Diff_{\mu}(\Stwo)$ on $\mathfrak{g}^*=C_0^{\infty}(\Stwo)$
is defined by
$$\inprodL{\Ad^*_{\Phi^{-1}}\omega}{\xi}=
\inprodL{\omega}{\Ad_{\Phi^{-1}}\xi}\qquad \forall\,\Phi\in G,\omega\in\mathfrak{g}^*, \xi\in\mathfrak{g},$$
and the adjoint action $\Ad$ of $G$ on its Lie algebra is the push-forward operation on vector fields, see \cite[Section 14.1]{MR13} for further details.

Any functional of the form
\begin{equation*}
    \mathcal{C}_f(\omega)=\int_{\Stwo} f(\omega)\,\mu,\qquad f:\mathbb{R}\to\mathbb{R},
\end{equation*}
is a conserved quantity of motion, in particular a Casimir invariant.
Linear, quadratic, etc. invariants are obtained by
taking $f(\omega)$ as monomials.
The presence of these integrals imposes an infinite number of constraints on the dynamical variables given by the Fourier components of the vorticity field \cite{zeitlin91}.
A crude truncation in Fourier space would hinder the geometric structure of the problem yielding an inconsistent approximation of the dynamics which induces, in turn, spurious effects and poorly accurate solutions.
This problem was solved by Zeitlin \cite{zeitlin91,zeitlin04} via the so-called sine truncation and based on quantization results of Hoppe \cite{hoppe89}.
The idea is to generate a sequence of finite-mode approximations yielding a finite-dimensional Lie--Poisson system
and providing a number of Casimir functions which tend to the original ones when the truncation size $N$ tends to infinity.

%%%%%%%%%%%%%%%%%%%%%%%%%%%%%%%%%%%%%%%%%%%%%
\subsection{The Zeitlin model on $\Stwo$}\label{sec:zeitlin}
%%%%%%%%%%%%%%%%%%%%%%%%%%%

The approximation proposed by Zeitlin relies on quantization theory \cite{hoppe89} and $L_{\alpha}$-limits \cite[Definition 2.1]{BMS94} to spatially discretize the vorticity equation \eqref{eq:IELieP} by replacing the infinite-dimensional Poisson algebra of smooth functions with the matrix Lie algebra $\fru(N)$.

The idea \cite{HoppeYau98,BHSS91,BMS94} is to, first, introduce a sequence of linear surjective maps $\{p_N\}_{N\in\mathbb{N}^+}$, $p_N:C^{\infty}(\Stwo)\to\fru(N)$, mapping the unit function $x\mapsto 1$ to the imaginary identity matrix $iI\in\fru(N)$. This implies that, for any $N\in\mathbb{N}^+$, $p_N$ descends to a map between the quotient Lie algebras $C^{\infty}(\Stwo)/\mathbb{R}$ and $\mathfrak{pu}(N):=\fru(N)/i\mathbb{R}I$. The dual $\mathfrak{pu}^*(N)$ is naturally identified with $\mathfrak{su}(N)$ via the pairing corresponding to the Frobenius inner product $\inprodF{W}{P}=\Tr(W^*P)$, with $W\in\mathfrak{su}(N)$ and $P\in\mathfrak{pu}(N)$. The corresponding $\ad^*$-operator on $\mathfrak{su}(N)$ is
$$\ad^*_P(W)=-[P,W]_N:=-\dfrac{1}{\hbar_N}[P,W],\qquad \hbar_N:=\dfrac{2}{\sqrt{N^2-1}},$$
where $[\cdot,\cdot]$ is the matrix commutator.

An explicit expression for the maps $p_N$ can be given by considering the $L^2(\Stwo)$-orthonormal basis for $C^{\infty}_0(\Stwo)$ provided by the complex spherical harmonics
$\{Y_{\ell,m}\,:\,0\leq \ell\leq N-1,\, -\ell\leq m\leq \ell\}$
in inclination-azimuthal coordinates.
For a fixed $N\in\mathbb{N}^+$, the projection $p_N$ can then be defined as
\begin{equation*}%\label{eq:omW}
    p_N:\;\omega(\theta,\phi)=\sum_{\ell=0}^{\infty}\sum_{m=-\ell}^{\ell} \omega^{\ell m}Y_{\ell,m}(\theta,\phi)\longmapsto
    \sum_{\ell=0}^{N-1}\sum_{m=-\ell}^{\ell}\ic \omega^{\ell m} T_{\ell,m}^N=W,
\end{equation*}
by associating to each spherical harmonic $Y_{\ell,m}$ a matrix $T^N_{\ell,m}\in\mathfrak{sl}(N,\mathbb{C})$ defined as
\begin{equation*}
    (T_{\ell,m}^N)_{m_1,m_2}:= \sqrt{\dfrac{N}{4\pi}}(-1)^{s-m_1}\sqrt{2\ell+1}
    \begin{pmatrix}
    s & \ell & s\\
    -m_1 & m & m_2
    \end{pmatrix},\qquad s:=\dfrac{N-1}{2},
\end{equation*}
where the term in parenthesis denotes the Wigner 3j-symbol.
Note that, for any fixed pair $(\ell,m)$, the matrix $T^N_{\ell,m}$ has non-zero entries only on the $-m$th diagonal. Indeed, by the properties of the Wigner 3j-symbol, $(T_{\ell,m}^N)_{m_1,m_2}= 0$ whenever
$-m_1+m+m_2\neq 0$.
Moreover, the matrices $\{T^N_{\ell,m}\}_{\ell,m}$ are orthonormal with respect to the scaled Frobenius inner product defined as
$$\inprodN{A}{B}:=\dfrac{4\pi}{N}\inprodF{A}{B}=\dfrac{4\pi}{N}\Tr(A^*B),\qquad \forall\, A,B\in\mathbb{C}^{N\times N}.$$
Note that the $L^2$-dual $p_N^*:\fru(N)\to \mathfrak{g}$ of $p_N$ is a right inverse of $p_N$.
We refer to \cite[Section 3.1]{MV24} for more details on such quantization.

A suitable ``quantized'' Laplace operator $\Delta_N:\fru(N)\to\mathfrak{su}(N)$ is constructed so that its kernel is $i\mathbb{R}I$ and it descends to a bijective map $\mathfrak{pu}(N)\to\mathfrak{su}(N)$.
Typically, the discrete Laplacian is taken so that it keeps the
spectral properties of the Laplace operator \cite{HoppeYau98}:
\begin{equation}\label{eq:lapl}
    \Delta_N T_{\ell,m}^N = -\ell(\ell+1) T_{\ell,m}^N,\qquad 0\leq\ell\leq N-1,\;-\ell\leq m\leq \ell,
\end{equation}
meaning that $T^N_{\ell,m}$ is eigenvector of $\Delta_N$ with eigenvalue $-\ell(\ell+1)$.
The operator $\Delta_N^{-1}$ is normal since it has a basis of eigenvectors that are orthogonal in the Frobenius norm, and it is symmetric since its eigenvalues are real. Hence, $\Delta_N^{-1}$ is self-adjoint with respect to the Frobenius inner product.

Using this matrix algebra approximation of $(C^{\infty}(\Stwo),\bra{\cdot}{\cdot})$,
the Zeitlin model is the finite-dimensional Lie--Poisson system on $\frg^*=\mathfrak{pu}^*(N)\simeq\mathfrak{su}(N)$ given by %the isospectral flow
\begin{equation}\label{eq:meqt}
\left\{\begin{aligned}
     & \dot{W} + [P(W),W]_N=0,\\
     & \Delta_N P = W.
 \end{aligned}\right.
\end{equation}
The Zeitlin equation \eqref{eq:meqt} is itself an Euler--Arnold equation for the Lie group $G=\SU(N)$ with Lie algebra $\frg=\mathfrak{su}(N)$,
and Hamiltonian given by
\begin{equation}\label{eq:ham}
    H(W) = -\frac12 \inprodN{W}{P(W)}.
\end{equation}

The co-adjoint action of $R\in \SU(N)$ on $W\in\mathfrak{su}(N)$ is given by $\Ad^*_R(W)=RWR^*$. By the spectral theorem, it follows that the co-adjoint orbit
\begin{equation*}
\mathcal{O}_{\SU(N)}(W_0)
%$\Ad^*(W_0)
=\{\Ad_R^*(W_0)=R W_0 R^*\,:\, R\in\SU(N)\},\qquad W_0\in\mathfrak{su}(N),
\end{equation*}
consists of all elements of $\mathfrak{su}(N)$ with the same spectrum as $W_0$. Thus, the Zeitlin flow \eqref{eq:meqt} is isospectral since
its solution belongs to the co-adjoint orbit $\mathcal{O}_{\SU(N)}(W_0)$ of its initial condition $W_0\in\mathfrak{su}(N)$.
The quantities
\begin{equation}\label{eq:meqtCas}
    C_k(W):=\dfrac{4\pi}{N}\Tr(W^k),\qquad k=1,\ldots,N,
\end{equation}
are Casimir invariants.
\begin{remark}
The above characterization implies that
there exists $t\mapsto R(t)\in\SU(N)$
solution of the reconstruction equation for geodesics on $\SU(N)$, namely
\begin{equation}\label{eq:rec}
    \dot{R}+\dfrac{1}{\hbar_N}P(RW_0R^{-1})R=0,
\end{equation}
with $R(t_0)=I$, and such that $R(t)W_0R^{-1}(t)$ is the solution of the Zeitlin model. The approximation proposed in \Cref{sec:Sfixed} is inspired by this consideration.
\end{remark}

%%%%%%%%%%%%%%%%%%%%%%%%%%%%%%%%%%%%%%%%%%%%%
\subsection{Time integration of the Zeitlin model}
\label{sec:isoInt}
Let us consider a uniform partition of the temporal interval $\Ical=(0,T]=\cup_{\tind}\mathcal{I}_{\tind}$ where $\mathcal{I}_{\tind}:=(t_{\tind},t_{\tind+1}]$,  $t_{\tind}:=\tind \dt$ with $0\leq \tind\leq N_t-1$ and $\dt=T/N_t$.
In \cite{MV20} a numerical temporal integrator that preserves the Lie--Poisson and isospectral structure of the flow of \eqref{eq:meqt} was introduced.
In this section we recall a second order integrator from the family of methods introduced in \cite{MV20}, the one that has been mostly used in numerical simulations of the Zeitlin model and its extensions \cite{cifani2023efficient,franken24,franken24b}.

In each temporal subinterval $\mathcal{I}_{\tind}$, with $\tind\geq 0$, given $W_{\tind}$, the method consists in setting $\widetilde{W}^{(0)}=W_{\tind}$ and then computing, for $j=0,1,\ldots$
\begin{equation}\label{eq:Iso2NLit}
    \widetilde{W}^{(j+1)}=W_{\tind} - \frac{\dt}{2}[P(\widetilde{W}^{(j)}),\widetilde{W}^{(j)}]_N+
    \frac{\dt^2}{4\hbar_N^2}P(\widetilde{W}^{(j)})\widetilde{W}^{(j)} P(\widetilde{W}^{(j)}),
\end{equation}
until a certain stopping criterion is satisfied.
Once the update has stopped at the $n_{\iter}^{\tind}$th iteration, one sets $\widetilde{W}=\widetilde{W}^{(n_{\iter}^{\tind})}$ and computes the updated vorticity matrix as
\begin{equation}\label{eq:Iso2upd}
    W_{\tind+1}=\left(I-\frac{\dt}{2\hbar_N}P(\widetilde{W})\right)\widetilde{W}
    \left(I+\frac{\dt}{2\hbar_N}P(\widetilde{W})\right).
\end{equation}

Note that, to the best of our knowledge, this is the lowest order time integrator of this family on $\mathfrak{su}(N)$ (and on $\fru(N)$).

In the next result we establish the arithmetic complexity of the numerical time integration scheme described above.
To this end,
for an efficient computation of the inverse discrete Laplacian $\Delta_N$ one can observe, as in \cite{cifani2023efficient}, that it is a fourth order tensor which can be split into $2N-1$ blocks $\{\Delta^m\}_{m=-(N-1)}^{N-1}$ of size $N-|m|$, for $m=-(N-1),\ldots, N-1$. Then, the computation of the matrix $P$ consists in solving 
a linear system for each $\Delta^m$ with right hand side given by the $m$th diagonal of $W$ and giving the $m$th diagonal of $P$.
Each $\Delta^m$, for $m\geq 0$, is a tridiagonal symmetric matrix of size $N-|m|$, whose definition is given in, e.g., \cite[Equation (13)]{cifani2023efficient},
and $\Delta^{-|m|}=\Delta^{|m|}$ for any $m$.

\begin{proposition}\label{prop:IsoCost}
Let us consider the numerical time integration scheme \eqref{eq:Iso2NLit}-\eqref{eq:Iso2upd} for the approximation of problem \eqref{eq:meqt} on the temporal interval $\Ical_{\tind}=(t_{\tind},t_{\tind+1}]$, $\tind\geq 0$.
The arithmetic complexity of the algorithm is
\begin{equation*}
    O(N^3 n_{\iter}^{\tind}) %O(N^3 N_{\iter})
\end{equation*}
where
$n_{\iter}^{\tind}$ is the
number of iterations required by the nonlinear step \eqref{eq:Iso2NLit}.
\end{proposition}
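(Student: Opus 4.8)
The plan is to count operations in a single pass of the scheme, noting that there are two distinct pieces: the fixed-point iteration \eqref{eq:Iso2NLit}, repeated $n_{\iter}^{\tind}$ times, and the single multiplicative update \eqref{eq:Iso2upd}. Since both pieces are built from the same primitives, I would first establish the cost of those primitives and then add up.

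First I would account for the cost of evaluating $P = \Delta_N^{-1}W$. Using the block decomposition recalled before the statement, $\Delta_N^{-1}$ acts diagonal-by-diagonal: for each $m \in \{-(N-1),\dots,N-1\}$ one solves a tridiagonal symmetric linear system of size $N-|m|$ with the $m$th diagonal of $W$ as right-hand side. A tridiagonal solve of size $k$ costs $O(k)$, so the total over all $2N-1$ blocks is $\sum_{m} O(N-|m|) = O(N^2)$. (If the tridiagonal factorizations are precomputed once and for all, they do not enter the per-step count; even recomputing them each step costs only $O(N^2)$.) Hence each evaluation $W \mapsto P(W)$ costs $O(N^2)$.

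Next I would count the remaining linear-algebra operations. In \eqref{eq:Iso2NLit} the dominant cost is the matrix products: forming $[P(\widetilde W^{(j)}),\widetilde W^{(j)}]_N$ and the triple product $P(\widetilde W^{(j)})\widetilde W^{(j)}P(\widetilde W^{(j)})$ each require a constant number of $N\times N$ matrix–matrix multiplications, i.e. $O(N^3)$ flops (with standard multiplication; the bound is stated up to the chosen matrix-multiplication exponent), while the matrix additions and scalar multiplications are $O(N^2)$ and the $P$-evaluation is $O(N^2)$ as just shown. So one fixed-point iteration costs $O(N^3)$, and $n_{\iter}^{\tind}$ of them cost $O(N^3 n_{\iter}^{\tind})$. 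The final update \eqref{eq:Iso2upd} is again a bounded number of $N\times N$ products plus one $P$-evaluation, hence $O(N^3)$, which is absorbed. Summing, the per-step complexity is $O(N^3 n_{\iter}^{\tind})$, as claimed.

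There is no real obstacle here; the only point requiring a little care is making explicit that the inverse Laplacian is \emph{not} the bottleneck — it is $O(N^2)$, strictly cheaper than the $O(N^3)$ matrix multiplications — so that the $N^3$ in the bound comes entirely from the dense matrix products in \eqref{eq:Iso2NLit} and \eqref{eq:Iso2upd}, and the factor $n_{\iter}^{\tind}$ comes solely from repeating the nonlinear step. This contrast is precisely what motivates the low-rank approach of the later sections, where the $O(N^3)$ dense products are what one aims to avoid.
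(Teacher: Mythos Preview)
Your proof is correct and follows essentially the same approach as the paper: first show that evaluating $P=\Delta_N^{-1}W$ costs $O(N^2)$ via the diagonal-by-diagonal tridiagonal solves, then observe that each nonlinear iteration and the final update are dominated by a constant number of dense $N\times N$ matrix products at $O(N^3)$, giving $O(N^3 n_{\iter}^{\tind})$ overall. The paper is marginally more explicit in counting exactly two matrix--matrix multiplications per iteration (exploiting skew-Hermitian structure so that $P\widetilde W$ yields both $\widetilde W P$ and the triple product with one further multiply), but your ``constant number'' is entirely sufficient for the stated asymptotic bound.
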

\begin{proof}
As shown in \cite{cifani2023efficient}, the computation of the stream matrix can be performed in $c N^2$ operations, for some constant $c\in\mathbb{R}_+$.
Indeed, the stream matrix $P$ satisfying the Laplacian problem in \eqref{eq:meqt} can be
obtained by solving $N$ linear systems 
$\Delta^m p_m = w_m$ for $m=0,\ldots,N-1$,
where $p_m$ and $w_m$ denote the $m$th diagonals of $P$ and $W$, respectively.
Since each system has size $N-m$ and it is tridiagonal, Thomas algorithm allows a linear cost in the dimension $N-m$.

Moreover, at the $j$th iteration of the nonlinear solver \eqref{eq:Iso2NLit}, the computation of the bracket requires one multiplication of the stream matrix and of the vorticity matrix, while the last term of \eqref{eq:Iso2NLit} requires one further matrix-matrix multiplication.
Hence, two (typically dense) matrix-matrix multiplications, of complexity $O(N^3)$, are needed for each update of $\widetilde{W}^{(j)}$ and $W_{\tind}$, namely $n_{\iter}^{\tind}+1$ times. More precisely, given the stream matrix $P(\widetilde{W}^{(j)})$,
each iteration requires $4N^3+N^2$ operations.
Therefore, the total arithmetic complexity of the algorithm in $\mathcal{I}_{\tind}$ is
$(n_{\iter}^{\tind}+1)\big(4 N^3 + (c+1)N^2\big)$ and the conclusion follows.
\end{proof}

%%%%%%%%%%%%%%%%%%%%%%%%%%%%%%%%%%%
\section{Geometric low-rank approximation of the Zeitlin model}
\label{sec:lowrank}

As described in \Cref{sec:zeitlin}, the solutions of the Zeitlin model belong to the co-adjoint orbits on $\SU(N)$ associated with given initial conditions.
We propose to approximate the solution $W(t)$ of \eqref{eq:meqt} for the initial condition $W_0\in\mathfrak{su}(N)$ with the trajectory of the Zeitlin flow associated with a low-rank approximation $Y_0$ of $W_0$. More in details,
we consider the initial condition $Y_0\in\mathfrak{u}(N)$
obtained by diagonalizing $W_0$ and truncating its spectrum to the $r\leq N$ eigenvalues of largest modulus.
Since, in general, $Y_0$ has non vanishing trace, the trajectory $Y(t)$ satisfying \eqref{eq:meqt} belongs to the co-adjoint orbit $\mathcal{O}_{\U(N)}(Y_0)\subset\fru(N)$ of $\U(N)$, defined as
\begin{equation}\label{eq:orbit}
\mathcal{O}_{\U(N)}(Y_0)
=\{\Ad_R^*(Y_0)=R Y_0 R^*\,:\, R\in\U(N)\},\qquad Y_0\in\mathfrak{u}(N).
\end{equation}
Let us denote the best rank-$r$ approximation of any $W\in\mathbb{C}^{N\times N}$ as $\Wsvd$ so that $Y_0=\svd{W_0}$.
Let $\sigma_1(W_0)\geq\dots\geq \sigma_N(W_0)$ be the singular values of $W_0$ and let $\{\lambda_j(W_0)\}_{j=1}^N$ be the (purely imaginary) eigenvalues of $W_0$ ordered such that $|\lambda_1(W_0)|\geq\dots\geq |\lambda_N(W_0)|$. Note that $|\Im(\lambda_k(W_0))|=\sigma_k(W_0)$ for $1\leq k\leq N$.
The aforementioned choice of $Y_0$, i.e., $Y_0=\svd{W_0}$, gives $\sigma_k(Y_0)=\sigma_k(W_0)$ and $\lambda_k(Y_0)=\lambda_k(W_0)$ for all $1\leq k\leq r$.
It trivially holds that, if $W_0$ has rank $r\leq N$, then the solution $W(t)$ of \eqref{eq:meqt} coincides with the rank-$r$ solution $Y(t)$ at all times $t\in\Ical$.

By construction, the flow of $Y(t)$ is Lie--Poisson on the dual of $\fru(N)$ and isospectral since $Y(t)\in\mathcal{O}_{\U(N)}(Y_0)$ for any $t\in\Ical$.
This implies that the error between the $k$th Casimir evaluated at the solution $W$ of \eqref{eq:meqt} and at the rank-$r$ solution $Y$ only depends on the approximation at the initial time. Indeed,
\begin{equation}\label{eq:consCasWY}
|C_k(W(t))-C_k(Y(t))|=
%|C_k(W_0)-C_k(Y_0)|=
\dfrac{4\pi}{N}|\Tr(W_0^k)-\Tr(Y_0^k)|=
\dfrac{4\pi}{N}\Bigg|\sum_{j=r+1}^N \lambda^k_j(W_0)\Bigg|.
\end{equation}
Similarly, the Hamiltonian satisfies
$|H(W(t))-H(Y(t))|=|H(W_0)-H(Y_0)|$.
The rationale for introducing a low-rank approximation of the solution trajectories is that, in the presence of spatially localized solutions of the Euler equations, $Y(t)$ is expected to provide a ``good'' approximation (in a sense to be defined) of $W(t)$ at all times, at a reduced computational cost.
The next sections are devoted to the study of these two aspects. First
we analyze the accuracy of the proposed approximation.

%%%%%%%%%%%%%%%%%%%%%%%%%%%%%%%%%%%%%%%%
\subsection{A priori error estimates}
In this section we derive an a priori bound on the error between the solution $W$ of the Zeitlin model \eqref{eq:meqt} with initial condition $W_0$ and the low-rank approximate solution $Y$ obtained from the initial condition $Y_0=\svd{W_0}$.
To this end, we first prove the Lipschitz continuity of the velocity field of \eqref{eq:meqt} in the Frobenius norm on the co-adjoint orbits of $\U(N)$.

\begin{lemma}\label{lem:LipsXH}
Let $W_0\in\mathfrak{u}(N)$ be given and let $\mathcal{O}_{\U(N)}(W_0)$ be the associated co-adjoint orbit as defined in \eqref{eq:orbit}.
Then, the operator $X_{\Ham}:\fru(N)\rightarrow \fru(N)$ given by $$X_{\Ham}(A):=
%-\ad^*_{\nabla H(A)}A=
\ad^*_{\Delta_N^{-1}A}A=[A,\Delta_N^{-1} A]_N,\qquad \forall A\in\fru(N),$$
is Lipschitz continuous in the Frobenius norm and it holds
$$\normF{X_{\Ham}(A)-X_{\Ham}(B)}\leq 2\hbar_N^{-1}\sqrt{N}\rho(W_0)\normF{A-B},\qquad\forall\, A,B\in\mathcal{O}_{\U(N)}(W_0)$$
where $\rho(W_0)$ denotes the spectral radius of $W_0$.
\end{lemma}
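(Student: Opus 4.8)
The plan is to estimate $X_{\Ham}(A)-X_{\Ham}(B)$ by writing the difference of commutators as a telescoping sum and controlling each piece using submultiplicativity of the Frobenius norm and the fact that $A,B$ lie on a fixed co-adjoint orbit, so their spectral radii and operator norms are uniformly bounded by $\rho(W_0)$. Concretely, $X_{\Ham}(A)-X_{\Ham}(B)=\hbar_N^{-1}\big([A,\Delta_N^{-1}A]-[B,\Delta_N^{-1}B]\big)$, and I would insert an intermediate term, e.g.
\begin{equation*}
[A,\Delta_N^{-1}A]-[B,\Delta_N^{-1}B]
=[A-B,\Delta_N^{-1}A]+[B,\Delta_N^{-1}(A-B)].
\end{equation*}
Then, using $\normF{[X,Y]}\leq 2\norm{X}\normF{Y}$ (or the symmetric variant $\leq 2\normF{X}\norm{Y}$) where $\norm{\cdot}$ is the spectral norm, the first term is bounded by $2\normF{A-B}\,\norm{\Delta_N^{-1}A}$ and the second by $2\norm{B}\,\normF{\Delta_N^{-1}(A-B)}$.

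The key facts feeding the final bound are: (i) for $A\in\mathcal{O}_{\U(N)}(W_0)$ one has $\norm{A}=\rho(A)=\rho(W_0)$ since $A$ is skew-Hermitian (hence normal) and isospectral to $W_0$, and likewise $\norm{B}=\rho(W_0)$; (ii) $\Delta_N^{-1}$ is self-adjoint with respect to the Frobenius inner product with eigenvalues $-1/(\ell(\ell+1))$ for $1\leq\ell\leq N-1$, so $\norm{\Delta_N^{-1}}_{F\to F}=\tfrac12\leq 1$ in the Frobenius norm, giving $\normF{\Delta_N^{-1}(A-B)}\leq\normF{A-B}$; and (iii) to bound $\norm{\Delta_N^{-1}A}$ in spectral norm I would instead pass through the Frobenius norm via $\norm{\Delta_N^{-1}A}\leq\normF{\Delta_N^{-1}A}\leq\normF{A}$, and then use $\normF{A}\leq\sqrt{N}\,\norm{A}=\sqrt{N}\,\rho(W_0)$. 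Combining, the first term is at most $2\sqrt{N}\,\rho(W_0)\,\normF{A-B}$ and the second at most $2\rho(W_0)\,\normF{A-B}$, and after multiplying by $\hbar_N^{-1}$ the stated estimate $2\hbar_N^{-1}\sqrt{N}\,\rho(W_0)\,\normF{A-B}$ follows (absorbing the smaller second contribution, or noting $1\leq\sqrt N$ so the sum of the two bounds stays within a constant — one may need to be slightly more careful with the constant, but the claimed form with $2\sqrt N$ is reached by putting the spectral-norm factor on the $A-B$ difference in the first term and the Frobenius factor in the second).

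The main obstacle, and the point requiring care, is matching the exact constant $2\hbar_N^{-1}\sqrt{N}\,\rho(W_0)$ rather than something like $4\hbar_N^{-1}\sqrt{N}\,\rho(W_0)$. The cleanest route is to \emph{not} split symmetrically: write the whole difference so that the factor $\normF{A-B}$ appears once, multiplied by a spectral-norm quantity bounded by $\sqrt N\rho(W_0)$. For instance, bound $\normF{[A,\Delta_N^{-1}A]-[B,\Delta_N^{-1}B]}$ by first using bilinearity to get $\normF{[A-B,\Delta_N^{-1}A]}+\normF{[B,\Delta_N^{-1}(A-B)]}$, then for the first summand use the bound $\normF{[X,Y]}\le 2\normF{Y}\,\norm{X}$ with $X=\Delta_N^{-1}A$ harmless — here one needs $\norm{\Delta_N^{-1}A}$, which is $\leq\norm{A}=\rho(W_0)$ because $\Delta_N^{-1}$ is a Frobenius-self-adjoint contraction and $A$ normal is not quite enough for the spectral bound of the product, so one instead keeps $\normF{\Delta_N^{-1}A}\leq\normF{A}$ and uses $\normF{[X,Y]}\le 2\norm{X}_F\|Y\|_F$? — this is where the $\sqrt N$ inevitably enters and the bookkeeping must be done honestly. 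I would therefore present the estimate by committing to: $\normF{[A-B,\Delta_N^{-1}A]}\le 2\,\norm{A-B}_F\,\norm{\Delta_N^{-1}A}$ with $\norm{\Delta_N^{-1}A}\le\norm{A}=\rho(W_0)$ — wait, this gives only $\sqrt N$-free — so in fact the $\sqrt N$ must come from an inequality like $\norm{\Delta_N^{-1}A}\le\normF{\Delta_N^{-1}A}\le\normF A\le\sqrt N\,\rho(W_0)$ being used in a place where the sharper spectral bound is unavailable; pinning down exactly that place, and confirming the second summand contributes a strictly smaller term so the total stays at $2\hbar_N^{-1}\sqrt N\rho(W_0)$, is the delicate part of the argument.
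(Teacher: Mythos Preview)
Your telescoping approach is correct and is essentially the paper's argument in different clothing: the paper bounds the Jacobian of $\vc(A)\mapsto\vc(X_H(A))$, whose action on $\vc(\eta)$ is $\hbar_N^{-1}\vc\big([\eta,\Delta_N^{-1}A]+[A,\Delta_N^{-1}\eta]\big)$ --- exactly your bilinear split with $\eta=A-B$ --- and then invokes the mean-value inequality. Your direct estimate avoids the (minor) issue that the segment from $A$ to $B$ does not lie on the orbit.

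Your hesitation about the constant is unnecessary and is entirely resolved by using the sharp value $\norm{\Delta_N^{-1}}_{F\to F}=\tfrac12$ (which you state but then relax to $\le 1$) in \emph{both} terms. With it, route (iii) gives
\[
\normF{[A-B,\Delta_N^{-1}A]}\le 2\normF{A-B}\,\norm{\Delta_N^{-1}A}_2
\le 2\normF{A-B}\,\normF{\Delta_N^{-1}A}
\le \normF{A}\,\normF{A-B}
\le \sqrt{N}\,\rho(W_0)\,\normF{A-B},
\]
while $\normF{[B,\Delta_N^{-1}(A-B)]}\le 2\norm{B}_2\cdot\tfrac12\normF{A-B}=\rho(W_0)\,\normF{A-B}$. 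Summing and multiplying by $\hbar_N^{-1}$ yields $\hbar_N^{-1}(\sqrt N+1)\rho(W_0)\normF{A-B}\le 2\hbar_N^{-1}\sqrt N\,\rho(W_0)\normF{A-B}$, the stated bound. This is the same chain the paper uses, written there as $\hbar_N^{-1}(\normF{A}+\norm{A}_2)\le 2\hbar_N^{-1}\normF{A}\le 2\hbar_N^{-1}\sqrt N\,\rho(W_0)$.
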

\begin{proof}
Let $\mathcal{D}:=\{\vc(A)\in\mathbb{C}^{N^2}:\,A\in\mathcal{O}_{\U(N)}(W_0)\}$. Let $\beta:\mathcal{D}\rightarrow \mathcal{D}$ be defined as
\begin{equation*}
    \beta(\vc(A))=\vc(X_{\Ham}(A))=\hbar_N^{-1}(I_N\otimes A^{\top}-A\otimes I_N)\vc(\Delta_N^{-1} A).
\end{equation*}
Since the inverse Laplace operator $\Delta_N^{-1}$ is a linear operator, there exists a matrix $L\in\mathbb{R}^{N^2\times N^2}$ such that $\vc(\Delta_N^{-1} A)=L\vc(A)$.
Let $J_{\beta}(\vc(A))\in\mathbb{C}^{N^2\times N^2}$ denote the Jacobian matrix of $\beta$ at $\vc(A)$.
We observe that $\beta\in C^1$ and the map $\vc(A)\mapsto J_{\beta}(\vc(A))$ is continuous. Since $\mathcal{D}$ is compact, $\norm{J_{\beta}(\vc(A))}_2$ attains a maximum value in $\mathcal{D}$.
In particular, for a given $\vc(A)\in\mathbb{C}^{N^2}$, the Jacobian applied to any $\vc(\eta)\in\mathbb{C}^{N^2}$ is given by $J_{\beta}(\vc(A))(\vc(\eta))=
\vc(\ad^*_{\Delta_N^{-1}\eta}A+\ad^*_{\Delta_N^{-1}A}\eta)$.
Observing that $\norm{\Delta_N^{-1}}_2=\frac12$, we thus have,
for any $\vc(A)\in\mathcal{D}$,
\begin{equation*}
\begin{aligned}
    \norm{J_{\beta}(\vc(A))}_2& =\sup_{\norm{\eta}_2=1}\norm{J_{\beta}(\vc(A))(\vc(\eta))}_2\\
& \leq \dfrac{1}{\hbar_N} \normF{\Delta_N^{-1}A} \sup_{\norm{\eta}_2=1}\norm{I\otimes\eta^{\top}-\eta\otimes I}_2
+ \dfrac{2}{\hbar_N}\norm{A}_2 \sup_{\norm{\eta}_2=1}\normF{\Delta_N^{-1}\eta} \\
& \leq  \dfrac{2}{\hbar_N} \norm{A}_F\leq
\dfrac{2}{\hbar_N}\sqrt{N}\norm{A}_2 = \dfrac{2}{\hbar_N}\sqrt{N}\rho(W_0)\sim N^{3/2}\rho(W_0).
\end{aligned}
\end{equation*}
Then, the conclusion follows.
\end{proof}
To derive a priori error estimates between the solution $W$ of the Zeitlin model \eqref{eq:meqt} with initial condition $W_0\in\mathfrak{su}(N)$ and the solution $Y$ of \eqref{eq:meqt} with initial condition $\svd{W_0}$, we derive an error bound between $Y$ and the best low-rank approximation of $W$ at each time.
Since the flow \eqref{eq:meqt} is isospectral, the error between $W(t)$ and its best rank-$r$ approximation $\svd{W(t)}$, which is given by the truncated SVD of $W(t)$ at time $t\in\Ical$, is constant in time: by the Eckart--Young--Mirsky theorem \cite{EYM36}, it holds
\begin{equation*}
    \normF{W(t)-\svd{W(t)}}^2 = \sum_{i=r+1}^{N} \sigma_i^2(W_0).
\end{equation*}
Note that the best rank-$r$ approximation is unique if and only if, for all $t\geq 0$, $\sigma_r(W(t))\neq \sigma_{r+1}(W(t))$. Without loss of generality we can always consider the case in which $r$ is such that $\sigma_r(W_0)> \sigma_{r+1}(W_0)$.

Observe that the best rank-$r$ approximation of $W$ satisfies the evolution equation
\begin{equation}\label{eq:evolSVD}
    d_t(\svd{W(t)}) = [\Wsvd,P(W)]_N=X_{\Ham}(\Wsvd)+[\Wsvd,P(W-\Wsvd)]_N,
\end{equation}
where the first term only depends on the retained modes, while the last term takes into account the interaction of the retained modes with the neglected ones. To better highlight the relationship between the evolution of $\Wsvd$ and the one of the low-rank approximation $Y$, equation \eqref{eq:evolSVD} can be equivalently written as
\begin{equation*}
    d_t(\svd{W(t)}) = \Pi_{T_{\Wsvd}\mathcal{M}_r}(X_{\Ham}(W))+L_{\Wsvd}[W-\Wsvd](X_{\Ham}(W))
\end{equation*}
where $X\mapsto L_{\Wsvd}[N](X)$ is the Weingarten map at $\Wsvd$ with normal direction $N$, and $\Pi_{T_{\Wsvd}\mathcal{M}_r}$ denotes the projection onto the tangent space of the rank-$r$ matrix manifold $\mathcal{M}_r$ at $\Wsvd$. 

\begin{proposition}\label{prop:err}
Let $W_0\in\mathfrak{su}(N)$. For any $t\in(0,T]$, let $W(t)$ be solution of \eqref{eq:meqt} with initial condition $W_0$, let $Y(t)$ be the solution of \eqref{eq:meqt} with initial condition $Y_0=\svd{W_0}$ and 
let $\svd{W(t)}$ be the best rank-$r$ approximation of $W(t)$. Then,
\begin{equation}\label{eq:errYWsvd}
    \normF{Y(t)-\svd{W(t)}}
    \leq
    \dfrac{\normF{\svd{W_0}}}{\hbar_N K}(e^{K t}-1) \sqrt{\sum_{i=r+1}^{N} \sigma_i^2(W_0)}\,,
\end{equation}
with $K$
Lipschitz continuity constant of $X_{\Ham}=-\ad^*_{\nabla H}$ in the Frobenius norm.
\end{proposition}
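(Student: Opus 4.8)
The plan is to set up a Gr\"onwall argument comparing the two trajectories $Y(t)$ and $\svd{W(t)}$, both of which lie on the same co-adjoint orbit $\mathcal{O}_{\U(N)}(Y_0)$ (here I use that $\svd{W(t)}$ is isospectral with $\svd{W_0}=Y_0$, since $W(t)$ is isospectral with $W_0$ by \eqref{eq:meqt} and the truncated spectra coincide). Write $E(t):=Y(t)-\svd{W(t)}$, with $E(0)=0$. The curve $Y(t)$ satisfies $d_t Y = X_{\Ham}(Y)$, while $\svd{W(t)}$ satisfies the perturbed equation \eqref{eq:evolSVD}, namely $d_t(\svd{W(t)}) = X_{\Ham}(\svd{W(t)}) + [\svd{W(t)},P(W-\svd{W(t)})]_N$. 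Subtracting,
\begin{equation*}
d_t E = \big(X_{\Ham}(Y) - X_{\Ham}(\svd{W(t)})\big) - [\svd{W(t)},P(W-\svd{W(t)})]_N.
\end{equation*}

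Next I would estimate the two pieces on the right in Frobenius norm. For the first, \Cref{lem:LipsXH} applied on the orbit $\mathcal{O}_{\U(N)}(Y_0)$ gives $\normF{X_{\Ham}(Y) - X_{\Ham}(\svd{W(t)})} \le K \normF{E(t)}$ with $K = 2\hbar_N^{-1}\sqrt{N}\,\rho(Y_0) \le 2\hbar_N^{-1}\sqrt{N}\,\rho(W_0)$ (using $\rho(Y_0)\le\rho(W_0)$ by the truncation). For the inhomogeneous term, I would bound
\begin{equation*}
\normF{[\svd{W(t)},P(W-\svd{W(t)})]_N} \le \hbar_N^{-1}\,2\,\norm{\svd{W(t)}}_2\,\normF{P(W-\svd{W(t)})} \le \hbar_N^{-1}\,2\,\rho(W_0)\,\norm{\Delta_N^{-1}}_2\,\normF{W-\svd{W(t)}},
\end{equation*}
and then use $\norm{\Delta_N^{-1}}_2=\tfrac12$ together with the Eckart--Young--Mirsky identity $\normF{W(t)-\svd{W(t)}}=\big(\sum_{i=r+1}^N\sigma_i^2(W_0)\big)^{1/2}$, which is constant in $t$ because of isospectrality. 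This produces a constant forcing term $g:=\hbar_N^{-1}\rho(W_0)\big(\sum_{i=r+1}^N\sigma_i^2(W_0)\big)^{1/2}$, i.e. a differential inequality $\tfrac{d}{dt}\normF{E(t)} \le K\normF{E(t)} + g$. Integrating via Gr\"onwall with $E(0)=0$ yields $\normF{E(t)}\le \tfrac{g}{K}(e^{Kt}-1)$.

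Finally I would reconcile the prefactor with the stated bound: writing $g/K = \tfrac{\hbar_N^{-1}\rho(W_0)}{K}\big(\sum_{i=r+1}^N\sigma_i^2\big)^{1/2}$ and noting $\rho(W_0)=\norm{Y_0}_2 \le \norm{Y_0}_F = \normF{\svd{W_0}}$, we get $g/K \le \tfrac{\normF{\svd{W_0}}}{\hbar_N K}\big(\sum_{i=r+1}^N\sigma_i^2(W_0)\big)^{1/2}$, which is exactly \eqref{eq:errYWsvd} after absorbing the factor $\hbar_N^{-1}$ that is implicit in the definition of $K$ into the displayed $\tfrac{1}{\hbar_N K}$; a small bookkeeping check of the constants against the statement of \Cref{lem:LipsXH} is needed here. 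The main obstacle, and the step that requires the most care, is the handling of the inhomogeneous term $[\svd{W(t)},P(W-\svd{W(t)})]_N$: one must verify that the evolution equation \eqref{eq:evolSVD} for the best rank-$r$ approximation is valid (which requires the simple-gap assumption $\sigma_r(W_0)>\sigma_{r+1}(W_0)$ so that $\svd{W(t)}$ is a differentiable curve on $\mathcal{M}_r$), and that $P(W-\svd{W(t)})=\Delta_N^{-1}(W-\svd{W(t)})$ can be bounded in Frobenius norm by $\tfrac12\normF{W-\svd{W(t)}}$ uniformly in $t$ — both are available from the spectral properties of $\Delta_N^{-1}$ recalled in \Cref{sec:zeitlin} and from Eckart--Young--Mirsky.
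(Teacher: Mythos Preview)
Your proposal is correct and follows essentially the same Gr\"onwall argument as the paper: write the difference of the two evolution equations, bound the Lipschitz term via \Cref{lem:LipsXH}, bound the commutator forcing term using $\norm{\Delta_N^{-1}}_2=\tfrac12$ and the time-independence of $\normF{W(t)-\svd{W(t)}}$, and integrate. The only cosmetic differences are that the paper bounds the commutator with $\normF{\svd{W(t)}}$ directly (rather than $\norm{\svd{W(t)}}_2$ followed by $\rho(W_0)\le\normF{\svd{W_0}}$), and that your worry about ``absorbing $\hbar_N^{-1}$'' is unfounded: your $g/K=\hbar_N^{-1}\rho(W_0)(\sum\sigma_i^2)^{1/2}/K$ is already $\le \normF{\svd{W_0}}(\sum\sigma_i^2)^{1/2}/(\hbar_N K)$ with no further manipulation needed.
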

\begin{proof}
Using the evolution equations \eqref{eq:evolSVD} for $\Wsvd$ and \eqref{eq:meqt} for $Y$ gives
\begin{equation}\label{eq:bound1}
\begin{aligned}
    \normF{\dot{Y}(t)-d_t(\svd{W(t)})}
     \leq &\, \normF{X_{\Ham}(Y)-X_{\Ham}(\svd{W})}\\
     & +\normF{[\Wsvd,P(W-\svd{W})]_N}.
\end{aligned}
\end{equation}
The first term can be bounded using the Lipschitz continuity of the Hamiltonian vector field $X_H$ as shown in \Cref{lem:LipsXH}. 
The second term in \eqref{eq:bound1} can be bounded using the linearity of the commutator as
\begin{equation*}
\begin{aligned}
 \normF{[\Wsvd,P(W-\Wsvd)]_N}
 & \leq 2\hbar_N^{-1}\normF{\Wsvd}\normF{P(W-\Wsvd)}.
\end{aligned}
\end{equation*}
Moreover, $\normF{P(W)}\leq \norm{\Delta_N^{-1}}_2\normF{W}=2^{-1}\normF{W_0}$, for any $t\geq 0$.
Note that, more generally, the norm of the velocity field of the flow satisfies \cite[Theorem 2.2]{WuXu10} 
\begin{equation*}
    \hbar_N^2\normF{X_{\Ham}(W)}^2
    %=\norm{[P(W),W]}^2
    \leq 2\normF{P(W)}^2\normF{W}^2-8 H(W)^2\leq
    \frac12\normF{W_0}^4-8 H(W)^2.
%    2 \norm{\Delta_N^{-1}}_2^2\norm{W_0}^4-8 H(W)^2.
\end{equation*}
Combining the bounds above we get, for any $t\geq 0$,
\begin{equation*}
\begin{aligned}
    \normF{\dot{Y}(t)-d_t(\svd{W(t)})}
    \leq &\, K \normF{Y(t)-\svd{W(t)}}\\
    & + \hbar_N^{-1}\normF{\svd{W(t)}}\normF{W(t)-\svd{W(t)}}.
\end{aligned}
\end{equation*}
Gronwall's inequality together with $\normF{W(t)-\svd{W(t)}}=\normF{W_0-\svd{W_0}}$ and
$\normF{\svd{W(t)}}=\normF{\svd{W_0}}$, gives
\begin{equation*}
    \normF{Y(t)-\svd{W(t)}}
    \leq  \hbar_N^{-1}\normF{\svd{W_0}}\normF{W_0-\svd{W_0}}\int_0^t e^{K(t-s)}\, ds.
\end{equation*}
Computing the integral yields the conclusion.
\end{proof}
Note that, if we take $K=K(N)=2\hbar_N^{-1}\sqrt{N}\rho(W_0)$ as in \Cref{lem:LipsXH} then
the  error in \eqref{eq:errYWsvd} can be bounded as
\begin{equation*}
     \normF{Y(t)-\svd{W(t)}}
     \leq
     \dfrac12(e^{K(N) t}-1) \normF{W_0-\svd{W_0}}\,.
\end{equation*}
Since the Lipschitz constant depends on $N$ the convergence, as $r\to N$, is not uniform in $N$.

The proposed low-rank approximation preserves the geometric structure of the Zeitlin model and it converges to its solution in the sense of \Cref{prop:err}.
However, solving problem \eqref{eq:meqt} from the low-rank initial condition $\svd{W_0}$ using the time integration scheme of \Cref{sec:isoInt} is computationally as expensive as solving the full-rank problem, namely the Zeitlin model with full-rank initial condition $W_0$.
The idea is then to exploit the low-rank structure of the approximate rank-$r$ solution via a suitable factorization.
In the next sections we consider two possible options.

%%%%%%%%%%%%%%%%%%%%%%%%%%%%%%%%%%%%
\section{Fixed spectrum approximation}\label{sec:Sfixed}
At the initial time, since $W_0\in\mathfrak{su}(N)$ is normal, it is unitarily diagonalizable.
%, namely there exist $D_0$ diagonal and an unitary $V_0\in\U(N)$ such that $W_0=V_0D_0V_0^*$.
We then approximate the solution $W(t)\in\mathfrak{su}(N)$ of \eqref{eq:meqt}, at any time $t\in\Ical$, with
\begin{equation}\label{eq:fact}
    Y(t)=U(t)S_0 U^*(t)\in\fru(N),
\end{equation}
where $S_0\in\mathbb{C}^{r\times r}$ contains the $r$ largest eigenvalues of $W_0$, and, for any $t\in\Ical$, $U(t)$
belongs to the Stiefel manifold $\St(r,\mathbb{C}^{N})$.
%:= \{M\in\mathbb{C}^{N\times r}:\;M^* M = I_{r}\}$.}
At the initial time we set $U(t_0)=U_0\in\mathbb{C}^{N\times r}$, where the columns of $U_0$ are the eigenvectors associated with the $r$ largest eigenvalues of $W_0$, and require that $t\mapsto U(t)$ satisfies the reconstruction equation \eqref{eq:rec} on $\St(r,\mathbb{C}^{N})$, that is
    \begin{equation}\label{eq:US0}
\dot{U} + \dfrac{1}{\hbar_N} P(US_0U^*) U = 0.
    \end{equation}
%    with initial condition $U(t_{0})=U_0\in\St(r,\mathbb{C}^{N})$.
Note that, with the factorization introduced in \eqref{eq:fact}, solving problem \eqref{eq:US0} corresponds to evolving a basis of eigenvectors for the approximate state $Y$.

By construction, the approximate trajectory $t\mapsto Y(t)=U(t)S_0U^*(t)$, obtained by solving \eqref{eq:US0} for $U$, is solution of the Zeitlin model \eqref{eq:meqt} with initial condition $Y_0=U_0S_0U^*_0=\svd{W_0}$ and, thus, it satisfies the geometric properties discussed in \Cref{sec:lowrank}.
In particular, if the rank of the solution $W$ of problem \eqref{eq:meqt} is $r\leq N$, then the approximate state $Y$ in \eqref{eq:fact} coincides with $W$.

For the numerical temporal approximation of problem \eqref{eq:US0} we need to make sure that $U(t)\in\mathbb{C}^{N\times r}$ remains on $\St(r,\mathbb{C}^N)$ for all $t\in\Ical$.
Moreover, we aim at achieving a computational complexity lower than $O(N^3)$, namely the one required to solve the original problem \eqref{eq:meqt} and described in \Cref{prop:IsoCost}.

%%%%%%%%%%%%%%%%%%%%%%%%%%%%%%%%%%%%%%%%%%
\subsection{Time integration of the reconstruction equation}
%based on Lie groups acting on manifolds}
\label{sec:RKMK}

In this section we describe and analyze numerical time integrators for the solution of the evolution equation \eqref{eq:US0}.
The first class of methods we propose is based on Lie groups acting on manifolds. The resulting methods fall within the class of numerical integration schemes known as Runge--Kutta Munthe-Kaas (RK-MK) methods \cite{RKMK}. They are explicit, of arbitrary order, and, although not Lie--Poisson, they allow to preserve the isospectrality of the flow by ensuring that the discrete trajectories remain on the co-adjoint orbits \eqref{eq:orbit}.

Let us first observe that problem \eqref{eq:US0} can be written as $\dot{U}(t) = \Lcal_{S_0}(U(t))\, U(t)$ for some $\Lcal_{S_0}:\St(r,\mathbb{C}^{N})\rightarrow\fru(N)$.
The idea is then to derive an evolution equation on the Lie algebra $\fru(N)$ via a coordinate map (of the first kind), namely a smooth function
$\coo:\fru(N)\rightarrow \U(N)$.
The coordinate map should satisfy $\coo(0)=I\in \U(N)$ and $\dcoo_0=I$, where
$\dcoo:\fru(N)\times\fru(N)\rightarrow\fru(N)$ is the right trivialized tangent of $\coo$ defined as
$d_t \coo(A(t))=\dcoo_{A(t)}(\dot{A}(t))\coo(A(t))$,
for any $A:\mathbb{R}\rightarrow \fru(N)$.
For sufficiently small $t\geq t_0$, the solution of \eqref{eq:US0}
is given by
$U(t) = \coo(\Omega(t))U(t_0)$ where $\Omega(t)\in\fru(N)$
satisfies
\begin{equation}\label{eq:NalgS0}
		\dot{\Omega}(t) = \dcoo_{\Omega(t)}^{-1}\big(\Lcal_{S_0}\big(U(t)\big)\big),\qquad\mbox{for }\; t\in\Ical,
\end{equation}
with $\Omega(t_0) = 0$.
Problem \eqref{eq:NalgS0} can be solved using traditional Runge--Kutta (RK) methods.
Let $(b_i,a_{i,j})$, for $i=1,\ldots,N_s$ and $j=1,\ldots,N_s$,
be the coefficients of the Butcher tableau describing an $N_s$-stage explicit RK method. Then, the numerical approximation of \eqref{eq:NalgS0} in the interval $\mathcal{I}_{\tind}=(t_{\tind},t_{\tind+1}]$, $\tind\geq 0$, is performed as in \Cref{algo:RKMK}.

\begin{algorithm}[H]
\caption{Explicit RK-MK scheme to solve \eqref{eq:NalgS0} in $(t_\tind,t_{\tind+1}]$}\label{algo:RKMK}
\normalsize
\begin{algorithmic}[1]
 \Require{$U_{\tind}\in\St(r,\mathbb{R}^{N})$, $\{b_i\}_{i=1}^{N_s}$, $\{a_{i,j}\}_{i,j=1}^{N_s}$ }
 \State $\Omega_{{\tind}}^1=0$, $U_{\tind}^1=U_{\tind}$
 \For{$i=2,\ldots,N_s$}
 \State $\Omega_{{\tind}}^i = \dt\sum\limits_{j=1}^{i-1} a_{i,j} \, \dcoo_{\Omega_{\tind}^j}^{-1}\big(\Lcal_{S_0}(U_{\tind}^j)\big)$,\label{line:Mi}
 \State $U_{\tind}^i = \coo(\Omega_{{\tind}}^i)\,U_{\tind}$,\label{line:Ui}
 \EndFor
 \State $\Omega_{{\tind}+1} = \dt\sum\limits_{i=1}^{N_s} b_i\,\dcoo_{\Omega_{{\tind}}^i}^{-1}\big(\Lcal_{S_0}(U_{\tind}^i)\big)$,
 \State \Return $U_{{\tind}+1} = \coo(\Omega_{{\tind}+1})\,U_{\tind}\in\St(r,\mathbb{R}^{N})$
% \EndProcedure
\end{algorithmic}
\end{algorithm}
The choices of the coordinate map $\coo$ and of the function 
$\Lcal_{S_0}:\St(r,\mathbb{C}^{N})\rightarrow\fru(N)$ are clearly not unique. In this work, we aim at choosing them such that the arithmetic complexity of \Cref{algo:RKMK} is at most linear in $N$ excluding the computational cost associated with the evaluation of $\Lcal_{S_0}$, which is problem dependent.
To this aim it is crucial to deal with low-rank quantities in the application of both the coordinate map and its tangent inverse, as it will be shown in \Cref{prop:RKMKCost}. 
Although the natural choice for $\Lcal_{S_0}$ would be $\Lcal_{S_0}(U)=-\hbar_N^{-1}P(US_0U^*)$ for any $U\in \St(r,\mathbb{C}^{N})$, this quantity is typically not low-rank. We thus opt for the alternative, yet equivalent, choice $\Lcal_{S_0}(U)=(I-UU^*)\mathcal{F}_{S_0}(U)U^*-U\mathcal{F}_{S_0}^*(U)$ where $\Fcal_{S_0}(U)=-\hbar_N^{-1}P(US_0U^*)U$.
Note that $\Lcal_{S_0}(U)\in\fru(N)$ since $U^*\Fcal_{S_0}(U)\in\fru(r)$
for any $U\in\St(r,\mathbb{C}^N)$.

As coordinate map we consider the Cayley transform
\begin{equation}\label{eq:cay}
    \cay(\Omega):=\Big(I-\dfrac{\Omega}{2}\Big)^{-1}\Big(I+\dfrac{\Omega}{2}\Big).
\end{equation}
\begin{proposition}\label{prop:RKMKCost}
Let us consider the explicit $N_s$-stage RK-MK time integration scheme in \Cref{algo:RKMK} for the approximation of problem \eqref{eq:US0} over the temporal interval $\Ical_{\tind}$, $\tind\geq 0$.
Let $\psi$ be the Cayley transform \eqref{eq:cay}.
The arithmetic complexity is % of the algorithm is
\begin{equation}\label{eq:RKMKcompl}
    O(N r^2 N_s^2) + O(r^3 N_s^4) + O(N^2 r N_s).
\end{equation}
%where $N_s$ is the number of stages of the underlying Runge--Kutta scheme.
\end{proposition}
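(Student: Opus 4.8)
The plan is to bound the cost of \Cref{algo:RKMK} operation by operation within one temporal step, using throughout the fact that every matrix produced by the scheme can be kept in factored low-rank form. First I set up the representations. Since $\Fcal_{S_0}(U)\in\mathbb{C}^{N\times r}$, both summands of $\Lcal_{S_0}(U)=(I-UU^*)\Fcal_{S_0}(U)U^*-U\Fcal_{S_0}^*(U)$ have rank at most $r$, so at each stage $j$ one stores $\Lcal_{S_0}(U_{\tind}^j)=B_jC_j^*$ with $B_j,C_j\in\mathbb{C}^{N\times 2r}$ (at cost $O(Nr^2)$ once $P(U_{\tind}^jS_0(U_{\tind}^j)^*)U_{\tind}^j$ is available), while the stage iterates $U_{\tind}^j$ are kept as $N\times r$ matrices. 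The quantities $K_j:=\dcoo_{\Omega_{\tind}^j}^{-1}\big(\Lcal_{S_0}(U_{\tind}^j)\big)$ are formed once and reused across line \ref{line:Mi}, so that $\Omega_{\tind}^i=\dt\sum_{j<i}a_{i,j}K_j$. Since $\Omega_{\tind}^1=0$ and each $K_j$ has rank at most $2r$, the matrix $\Omega_{\tind}^i$ is stored as $F_1^i(F_2^i)^*$ with $F_1^i,F_2^i\in\mathbb{C}^{N\times\rho_i}$ and $\rho_i\le 2r(i-1)=O(rN_s)$, the left factor being the concatenation of the left factors of $K_1,\dots,K_{i-1}$ and the right factor the same with the scalar weights $\dt\,a_{i,j}$ absorbed. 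Controlling this linear-in-$i$ growth of the ranks is what makes the final counts balance.

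For the tangent-inverse evaluations in line \ref{line:Mi} I would use the closed form of the right-trivialized inverse tangent of the Cayley map \eqref{eq:cay}, namely $\dcoo_\Omega^{-1}(A)=(I-\tfrac12\Omega)\,A\,(I+\tfrac12\Omega)$; since $\Omega\in\fru(N)$ one has $I+\tfrac12\Omega=(I-\tfrac12\Omega)^*$, so for a factored $A=BC^*$ one gets $\dcoo_\Omega^{-1}(A)=\big[(I-\tfrac12\Omega)B\big]\big[(I-\tfrac12\Omega)C\big]^*$, which stays factored of the same rank and is obtained by two multiplications of the rank-$\rho_j$ factors of $\Omega_{\tind}^j$ against $N\times 2r$ blocks, i.e.\ in $O(Nr\rho_j)=O(Nr^2 j)$ operations. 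Summing over stages yields an $O(Nr^2N_s^2)$ contribution, and the assembly of $\Omega_{\tind}^i$ from the $K_j$ (concatenation and rescaling of factors) is of lower order.

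For the reconstruction step in line \ref{line:Ui} (and the analogous final update) I would write $\cay(\Omega)U_{\tind}=2(I-\tfrac12\Omega)^{-1}U_{\tind}-U_{\tind}$ — well defined since $I-\tfrac12\Omega$ has spectrum of real part $1$ for every $\Omega\in\fru(N)$ — and invert the rank-$\rho_i$ update of the identity by the Sherman--Morrison--Woodbury identity, reducing the problem to factorizing the $\rho_i\times\rho_i$ capacitance matrix $I_{\rho_i}-(F_2^i)^*F_1^i$ plus a few products with the thin factors and with $U_{\tind}$. This is the step I expect to be the main obstacle: formed naively, $(F_2^i)^*F_1^i$ costs $O(N\rho_i^2)=O(Nr^2 i^2)$ per stage, hence $O(Nr^2N_s^3)$ overall, exceeding the claimed bound. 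The remedy is to observe that $(F_2^i)^*F_1^i$ has a $2r\times 2r$ block structure whose blocks are, up to the stage-dependent scalars $\dt\,a_{i,k}$, the inner products of the left and right factors of $K_1,\dots,K_{N_s}$; there are $O(N_s^2)$ such $2r\times 2r$ inner products, each of cost $O(Nr^2)$, so they are precomputed once in $O(Nr^2N_s^2)$ operations and reused, after which assembling each capacitance matrix costs only $O(r^2 i^2)$, with no $N$-dependence. Its factorization costs $O(\rho_i^3)=O(r^3 i^3)$, summing to $O(r^3N_s^4)$, while the remaining products that touch the ambient dimension cost $O(Nr\rho_i+r^2\rho_i)$ per stage, summing to $O(Nr^2N_s^2)$.

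Finally, the one problem-dependent ingredient, the evaluation of $\Lcal_{S_0}$, is bounded exactly as in \Cref{prop:IsoCost}: forming the vorticity matrix $U_{\tind}^jS_0(U_{\tind}^j)^*$ diagonal by diagonal costs $O(N^2r)$, solving the quantized Laplace problem for $P$ by the block-tridiagonal (Thomas) argument costs $O(N^2)$, and the product $P(U_{\tind}^jS_0(U_{\tind}^j)^*)\,U_{\tind}^j$ costs $O(N^2r)$, for $O(N^2r)$ per stage and $O(N^2rN_s)$ over the step. Collecting the three groups of contributions and discarding dominated terms gives \eqref{eq:RKMKcompl}.
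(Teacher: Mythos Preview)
Your proof is correct and follows essentially the same route as the paper: factor $\Lcal_{S_0}(U_{\tind}^i)$ in low-rank form, propagate the factorization through $\dcoo_{\Omega}^{-1}$ via the closed Cayley formula $\dcoo_\Omega^{-1}(A)=(I-\tfrac12\Omega)A(I+\tfrac12\Omega)$, track the rank growth of $\Omega_{\tind}^i$ as $O(ri)$, and handle $\coo(\Omega_{\tind}^i)U_\tind$ by a Woodbury-type inversion. The paper's only substantive differences are that it uses a rank-$3r$ rather than your rank-$2r$ representation (immaterial for the asymptotics), and that it delegates the Cayley-evaluation cost $O(Nrk_i)+O(k_i^2r)+O(k_i^3)$ to an external reference, whereas you spell out the Sherman--Morrison--Woodbury argument and explicitly address the capacitance-matrix assembly via reuse of the $O(N_s^2)$ pairwise $2r\times 2r$ inner products. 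That extra care is warranted: without it the naive $O(N\rho_i^2)$ Gram-matrix cost would yield $O(Nr^2N_s^3)$ rather than the stated $O(Nr^2N_s^2)$.
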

\begin{proof}
The idea is that one can decompose $\Lcal_{S_0}(U_{\tind})$ into the sum of low-rank factors. Indeed, for any $i=1,\ldots,N_s$,
$$\Lcal_{S_0}(U_{\tind}^i)=(I-U_{\tind}^i(U_{\tind}^i)^*)\mathcal{F}_{S_0}(U_{\tind}^i)(U_{\tind}^i)^*-U_{\tind}^i\mathcal{F}_{S_0}^*(U_{\tind}^i)=
a_ib_i^*-U_{\tind}^ic^*_i$$
where $a_i := [\Fcal_{S_0}(U_{\tind}^i)|-U_{\tind}^i]\in\mathbb{C}^{N\times 2r}$,
$b_i := [U_{\tind}^i|\Fcal_{S_0}(U_{\tind}^i)]\in\mathbb{C}^{N\times 2r}$, and
$c_i := U_{\tind}^i\Fcal_{S_0}^*(U_{\tind}^i)U_{\tind}^i\in\mathbb{C}^{N\times r}$.
This implies that the inverse tangent map of the Cayley transform admits, in turn, a low-rank factorization:
$$
\Lambda_i:=\dcoo_{\Omega_{\tind}^i}^{-1}\big(\Lcal_{S_0}(U_{\tind}^i)\big)
=\dcoo_{\Omega_{\tind}^j}^{-1}(a_ib_i^*-U_{\tind}^ic^*_i)
= A_{\tind}^ia_i(A_{\tind}^ib_i)^*-A_{\tind}^iU_{\tind}^i(A_{\tind}^ic_i)^*=\alpha_i\beta_i^*
$$
where
$\alpha_i:=[A_{\tind}^ia_i|-A_{\tind}^iU_{\tind}^i]\in\mathbb{C}^{N\times 3r}$,
$\beta_i:=[A_{\tind}^ib_i|A_{\tind}^ic_i]\in\mathbb{C}^{N\times 3r}$ and
$A_{\tind}^i:= I-\Omega_{\tind}^i/2$.

The evaluation of $\Fcal_{S_0}(U)$, for any $U\in\mathbb{C}^{N\times r}$, requires the computation of the stream matrix $P(US_0U^*)$ and its multiplication by $U$, for a total complexity of $O(N^2 r)$.
Then, given $\Lcal_{S_0}(U_{\tind}^i)$, the computation of $\alpha_i$ and $\beta_i$ requires $O(Nr\rank(\Omega_{\tind}^i))$ operations.
From the definition of $\Omega_{\tind}^i$ at line 4 of the algorithm and the properties of the matrices $\Lambda_i$, one has that $k_i:=\rank(\Omega_{\tind}^i)\leq 3r(i-1)$.
This implies that the computation of $U_{\tind}^i$ at line 5 requires $O(Nrk_i)+O(k_i^2r)+O(k_i^3)$, as shown in, e.g., \cite[Proposition 5.2]{P21}.
The conclusion follows by summing these quantities.
\end{proof}

\begin{remark}
In this work we focus on the complexity reduction in the number of degrees of freedom $N$ and consider numerical time integration schemes of order at most $2$, i.e., with $N_s=2$.
For higher order timestepping, the polynomial complexity in the number $N_s$ of stages in \eqref{eq:RKMKcompl} can be mitigated by using tangent methods as the one proposed in \cite{CO02}.
\end{remark}

\subsubsection{Properties of the approximate solution}
A bound on the approximation error $\normF{W(t_{\tind})-Y_{\tind}}$, for any $\tind$, can be obtained by combining the bound \eqref{eq:errYWsvd} between the exact solution $Y$ and the best rank-$r$ approximation of $W$ with the following result.
\begin{lemma}
Let $Y_{\tind}=U_{\tind}S_0U_{\tind}^*$ be the rank-$r$ approximate solution at time $t_{\tind}$ with $U_{\tind}$ obtained from a RK-MK method of order $p$ as in \Cref{algo:RKMK}.
Let $Y(t_{\tind})=U(t_{\tind})S_0 U^*(t_{\tind})$ with $U(t_{\tind})$ exact solution of \eqref{eq:US0} at time $t_{\tind}$.
Then, there exists a positive constant $c\in\mathbb{R}$ such that
$$\normF{Y_{\tind}-Y(t_{\tind})}\leq c\dt^p \sqrt{\sum_{i=1}^r\sigma_i^2(W_0)}.$$
\end{lemma}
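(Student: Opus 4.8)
The plan is to transfer a bound on $\normF{U_\tind-U(t_\tind)}$ to the desired bound on $\normF{Y_\tind-Y(t_\tind)}$, and to obtain the former from the classical convergence theory of Runge--Kutta Munthe-Kaas schemes.

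First I would telescope the difference of the two factorizations through $U_\tind S_0 U^*(t_\tind)$, obtaining
\begin{equation*}
Y_\tind-Y(t_\tind)=U_\tind S_0\big(U_\tind-U(t_\tind)\big)^*+\big(U_\tind-U(t_\tind)\big)S_0 U^*(t_\tind).
\end{equation*}
Taking Frobenius norms, using the submultiplicative bound $\normF{ABC}\le\norm{A}_2\normF{B}\norm{C}_2$, and recalling that both $U_\tind$ and $U(t_\tind)$ lie on $\St(r,\mathbb{C}^N)$, so that $\norm{U_\tind}_2=\norm{U(t_\tind)}_2=1$, this gives
\begin{equation*}
\normF{Y_\tind-Y(t_\tind)}\le 2\norm{S_0}_2\,\normF{U_\tind-U(t_\tind)}\le 2\,\normF{U_\tind-U(t_\tind)}\sqrt{\sum_{i=1}^r\sigma_i^2(W_0)},
\end{equation*}
where the last step uses $\norm{S_0}_2\le\normF{S_0}$ together with $\normF{S_0}^2=\sum_{i=1}^r|\lambda_i(W_0)|^2=\sum_{i=1}^r\sigma_i^2(W_0)$, the latter identity because $W_0\in\mathfrak{su}(N)$ has purely imaginary eigenvalues with $|\lambda_k(W_0)|=\sigma_k(W_0)$.

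It then remains to prove that $\normF{U_\tind-U(t_\tind)}\le\tilde c\,\dt^{p}$ for a constant $\tilde c$ independent of $\dt$. Here I would invoke the standard global-error estimate for RK-MK methods: \Cref{algo:RKMK} is a classical order-$p$ Runge--Kutta method applied to the Lie-algebra equation \eqref{eq:NalgS0}, whose right-hand side is smooth on a neighbourhood of the relevant compact set. Indeed, $\Lcal_{S_0}$ is polynomial in $U$ and $U^*$ composed with the bounded linear operator $P=\Delta_N^{-1}$, the Cayley transform $\coo=\cay$ and its right-trivialized inverse tangent are analytic on the skew-Hermitian matrices (where $I-\Omega/2$ is invertible because $\Omega$ has purely imaginary spectrum), and the exact trajectory $U(t)$ remains on the compact manifold $\St(r,\mathbb{C}^N)$ for all $t\in\Ical$ since the velocity field of \eqref{eq:US0} is bounded there. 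Consequently the one-step truncation error is $O(\dt^{p+1})$ and, by a standard discrete Gronwall (Lady Windermere's fan) argument over the bounded interval $\Ical$, the accumulated error obeys $\normF{U_\tind-U(t_\tind)}\le\tilde c\,\dt^{p}$; see \cite{RKMK}. Combining this with the estimate above and setting $c:=2\tilde c$ yields the claim.

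The only genuinely delicate point is this last step: one has to make sure that the RK-MK convergence theory is applicable, i.e., that the vector field of \eqref{eq:NalgS0} and the coordinate map are smooth on a neighbourhood of the relevant compact set and that all iterates $\Omega_\tind^i$ stay in the domain of $\cay$. Both facts are consequences of the skew-Hermitian structure and the compactness of $\St(r,\mathbb{C}^N)$, but they deserve to be spelled out; the rest of the argument is elementary.
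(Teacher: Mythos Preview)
Your argument is correct and follows essentially the same route as the paper: telescope $Y_\tind-Y(t_\tind)$ through a mixed term, bound via submultiplicativity and $\normF{S_0}^2=\sum_{i=1}^r\sigma_i^2(W_0)$, and invoke order-$p$ convergence of the RK-MK scheme for $\normF{U_\tind-U(t_\tind)}$. If anything, you are more careful than the paper, which simply asserts the inequality $\normF{Y_\tind-Y(t_\tind)}\le\normF{U_\tind-U(t_\tind)}\normF{S_0}(\norm{U(t_\tind)}_2+\norm{U_\tind}_2)$ and the value of $\normF{S_0}$, leaving the RK-MK convergence step implicit.
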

\begin{proof}
By simply applying the triangle inequality, one gets
$$\normF{Y_{\tind}-Y(t_{\tind})}
\leq \normF{U_{\tind}-U(t_{\tind})}\normF{S_0}\Big(\norm{U(t_{\tind})}_2+\norm{U_{\tind}}_2\Big),$$
and $\normF{S_0}^2=\normF{Y_0}^2=\sum_{i=1}^r\sigma_i^2(W_0)$.
\end{proof}

The discrete flow of $Y_{\tind}$ is isospectral and the Casimir functions \eqref{eq:meqtCas} satisfy,
$$|C_k(W(t_{\tind}))-C_k(Y_{\tind})|=\dfrac{4\pi}{N}\Bigg|\sum_{j=r+1}^N \lambda^k_j(W_0)\Bigg|,\qquad \forall\, 1\leq k\leq N.$$
Indeed, since the approximation $U_{\tind}$ of $U(t_{\tind})$ obtained from \Cref{algo:RKMK} belongs, by construction, to $\St(r,\mathbb{C}^{N})$ for any $\tind$, the approximate solution $Y_{\tind}=U_{\tind}S_0U_{\tind}^*$ belongs to $\fru(N)$.
As a consequence, the discrete flow of $Y_{\tind}$ belongs to the co-adjoint orbit $\mathcal{O}_{\U(N)}(Y_0)$ and the Casimir functions \eqref{eq:meqtCas} are exactly preserved, that is,
$|C_k(Y_{\tind})-C_k(Y_0)|=0$ for any $\tind$ and any $1\leq k\leq N$.

\subsection{A second order Lie--Poisson isospectral scheme}\label{sec:midU}
The family of methods described in \Cref{algo:RKMK} provides numerical time integration schemes that are isospectral, in the sense that $Y_{\tau}\in\mathcal{O}_{\U(N)}(Y_0)$ for any $\tau\geq 0$, but are not Lie--Poisson for the problem \eqref{eq:meqt} with initial condition $Y_0$.

A second order isospectral and Lie--Poisson time integration scheme for \eqref{eq:meqt} can be derived by discretizing the reconstruction equation \eqref{eq:US0} with the implicit midpoint rule.
\begin{proposition}
  Given $W_0\in\mathfrak{su}(N)$, let $Y_0=\svd{W_0}$ and let $U_0\in\St(r,\mathbb{C}^N)$ contain the eigenvectors of $W_0$ associated with its $r$ largest eigenvalues.
  The implicit midpoint rule applied to the reconstruction equation \eqref{eq:US0} in  the temporal interval $\Ical_{\tind}=(t_{\tind},t_{\tind+1}]$, namely
  \begin{equation}\label{eq:midU}
  U_{\tind+1}-U_{\tind}=-\hbar_N^{-1} \dt P(\widetilde{U} S_0 \widetilde{U}^*)\widetilde{U}\qquad\widetilde{U}:=\dfrac{U_{\tind+1}+U_{\tind}}{2},\quad \forall \tau\geq 0,
  \end{equation}
descends to an isospectral Lie--Poisson integrator for the Zeitlin model \eqref{eq:meqt} with initial condition $Y_0$.
In particular, the numerical time integration scheme
$$Y_{\tind}=U_{\tind}S_0 U^*_{\tind}\,\longmapsto\, Y_{\tind+1}=U_{\tind+1}S_0 U^*_{\tind+1}, \qquad\forall \tau\geq 0,$$
coincides with the second order isospectral Lie--Poisson integrator of
\cite[Definition 1]{MV20} described in \eqref{eq:Iso2NLit}-\eqref{eq:Iso2upd}.
\end{proposition}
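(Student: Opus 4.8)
The plan is to eliminate $U_{\tind+1}$ from the implicit midpoint update \eqref{eq:midU}, rewrite it as left multiplication of $U_{\tind}$ by a Cayley transform, and then match the induced map on $Y_{\tind}=U_{\tind}S_0U_{\tind}^*$ term by term with \eqref{eq:Iso2NLit}--\eqref{eq:Iso2upd}. Writing $c:=\dt/(2\hbar_N)$ and $P:=P(\widetilde{U}S_0\widetilde{U}^*)$, I would first substitute $U_{\tind+1}=2\widetilde{U}-U_{\tind}$ into \eqref{eq:midU}, which reduces it to $(I+cP)\widetilde{U}=U_{\tind}$, and back-substitution then gives $U_{\tind+1}=(I-cP)\widetilde{U}$. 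Since $P\in\fru(N)$ is skew-Hermitian, $I+cP$ is invertible for every $\dt$, so $\widetilde{U}=(I+cP)^{-1}U_{\tind}$ and
$$U_{\tind+1}=(I-cP)(I+cP)^{-1}U_{\tind}=:M\,U_{\tind},\qquad M=\cay(-\tfrac{\dt}{\hbar_N}P),$$
the last identity because $I-cP$ and $(I+cP)^{-1}$, being power series in $P$, commute.

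Next I would check that $M$ is unitary, which delivers isospectrality for free: $I\pm cP$ and their inverses commute, $(I-cP)(I+cP)=I-c^2P^2=(I+cP)(I-cP)$, and $(I\mp cP)^*=I\pm cP$ (since $c\in\mathbb{R}$, $P^*=-P$), whence $MM^*=I$. Therefore $U_{\tind}\in\St(r,\mathbb{C}^N)$ forces $U_{\tind+1}\in\St(r,\mathbb{C}^N)$, and
$$Y_{\tind+1}:=U_{\tind+1}S_0U_{\tind+1}^*=M\,Y_{\tind}\,M^*\in\mathcal{O}_{\U(N)}(Y_0),$$
where $Y_0=U_0S_0U_0^*=\svd{W_0}$; since $Y_{\tind+1}$ depends on $U_{\tind}$ only through $Y_{\tind}$, the update descends to a well-defined map on the co-adjoint orbit.

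The crux is to recognize this map as the scheme \eqref{eq:Iso2NLit}--\eqref{eq:Iso2upd} initialized at $Y_0$. I would set $\widetilde{W}:=\widetilde{U}S_0\widetilde{U}^*$, so that $P(\widetilde{W})=P$, and use $[P,\widetilde{W}]_N=\hbar_N^{-1}(P\widetilde{W}-\widetilde{W}P)$ to rewrite the fixed-point relation \eqref{eq:Iso2NLit} (with $W_{\tind}$ replaced by $Y_{\tind}$) as the congruence $Y_{\tind}=(I+cP(\widetilde{W}))\widetilde{W}(I-cP(\widetilde{W}))$ and the update \eqref{eq:Iso2upd} as $Y_{\tind+1}=(I-cP(\widetilde{W}))\widetilde{W}(I+cP(\widetilde{W}))$. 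Pulling the outer factors through the conjugate transpose by $(I\mp cP)^*=I\pm cP$, and inserting $(I+cP)\widetilde{U}=U_{\tind}$ and $(I-cP)\widetilde{U}=U_{\tind+1}$, I obtain
$$(I+cP(\widetilde{W}))\widetilde{W}(I-cP(\widetilde{W}))=\big[(I+cP)\widetilde{U}\big]S_0\big[(I+cP)\widetilde{U}\big]^*=U_{\tind}S_0U_{\tind}^*=Y_{\tind},$$
so that $\widetilde{W}$ indeed solves \eqref{eq:Iso2NLit} with datum $Y_{\tind}$, and similarly $(I-cP(\widetilde{W}))\widetilde{W}(I+cP(\widetilde{W}))=\big[(I-cP)\widetilde{U}\big]S_0\big[(I-cP)\widetilde{U}\big]^*=Y_{\tind+1}$, matching \eqref{eq:Iso2upd}. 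For $\dt$ small both implicit equations (for $\widetilde{U}$ and for $\widetilde{W}$) have a unique solution near $U_{\tind}$, resp. $Y_{\tind}$, by a standard contraction argument ($c=O(\dt)$), and $\widetilde{W}=\widetilde{U}S_0\widetilde{U}^*$ lies $O(\dt)$-close to $Y_{\tind}$; hence the two iterations produce the same iterate, the map $Y_{\tind}\mapsto Y_{\tind+1}$ is precisely the integrator of \cite[Definition 1]{MV20} started at $Y_0$, and its Lie--Poisson and isospectral character follows from that reference.

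The step I expect to be the main obstacle is the congruence identity at the heart of the identification: one must verify that the quadratic term $\tfrac{\dt^2}{4\hbar_N^2}P(\widetilde{W})\widetilde{W}P(\widetilde{W})$ in \eqref{eq:Iso2NLit} is exactly the cross term generated by expanding $(I+cP)\widetilde{W}(I-cP)$, and must handle the non-unitary factors $I\pm cP$ correctly through $P^*=-P$ instead of treating them as unitary conjugations. A secondary but easily dispatched subtlety is the well-posedness of the two implicit solves for $\dt$ small, which licenses concluding that the two schemes produce the same iterate rather than merely a shared fixed point.
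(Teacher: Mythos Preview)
Your proposal is correct and follows essentially the same route as the paper: both rewrite the midpoint rule as the pair $U_{\tind}=(I+cP)\widetilde{U}$, $U_{\tind+1}=(I-cP)\widetilde{U}$, define $\widetilde{W}=\widetilde{U}S_0\widetilde{U}^*$, and then substitute to obtain $Y_{\tind}=(I+cP(\widetilde{W}))\widetilde{W}(I-cP(\widetilde{W}))$ and $Y_{\tind+1}=(I-cP(\widetilde{W}))\widetilde{W}(I+cP(\widetilde{W}))$, which is the MV20 scheme. The paper simply appeals to this congruence form (citing \cite[Eq.~(2.3)]{Viviani20}) rather than expanding the product to match \eqref{eq:Iso2NLit} as you do, and it omits your Cayley-transform/unitarity step and the small-$\dt$ uniqueness discussion; note that the paper itself remarks, right after the proof, that the practical fixed-point iterations \eqref{eq:Iso2UNLit} and \eqref{eq:Iso2NLit} do \emph{not} produce identical iterates, so the proposition is about equivalence of the implicit maps, not of the iterative solvers.
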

\begin{proof}
Given $U_{\tind}\in\St(r,\mathbb{C}^N)$, for any $\tind\geq 0$, the numerical scheme \eqref{eq:midU} can be written as
\begin{equation}\label{eq:IsoU}
\left\{\begin{aligned}
    & U_{\tind} = \left(I+\frac{\dt}{2\hbar_N}P(\widetilde{U}S_0\widetilde{U}^*)\right)\widetilde{U},\\
    & U_{\tind+1} = \left(I-\frac{\dt}{2\hbar_N}P(\widetilde{U}S_0\widetilde{U}^*)\right)\widetilde{U}.
\end{aligned}\right.
\end{equation}
Then, $U_{\tind+1}$ belongs to $\St(r,\mathbb{C}^N)$.
Introducing the auxiliary variable $\widetilde{Y}:=\widetilde{U}S_0\widetilde{U}^*\in\mathbb{C}^{N\times N}$, and replacing in $Y_{\tind+1}$ the expression of $U_{\tind+1}$ given in \eqref{eq:IsoU}, results in
$$Y_{\tind+1}=U_{\tind+1}S_0 U^*_{\tind+1}=
\left(I-\frac{\dt}{2\hbar_N}P(\widetilde{Y})\right)\widetilde{Y}\left(I+\frac{\dt}{2\hbar_N}P(\widetilde{Y})\right).$$
Similarly, replacing in $Y_{\tind}$ the expression of $U_{\tind}$ given in \eqref{eq:IsoU}, results in
$$Y_{\tind}=U_{\tind}S_0 U^*_{\tind}=
\left(I+\frac{\dt}{2\hbar_N}P(\widetilde{Y})\right)\widetilde{Y}\left(I-\frac{\dt}{2\hbar_N}P(\widetilde{Y})\right).$$
The above equations for $Y_{\tind}$ and $Y_{\tind+1}$ coincide with \cite[Eq. (2.3)]{Viviani20}, namely the second order isospectral Lie--Poisson integrator of \cite{MV20}.
\end{proof}
The practical implementation of problem \eqref{eq:IsoU}
in each temporal subinterval $\mathcal{I}_{\tind}$, with $\tind\geq 0$, given $U_{\tind}\in\St(r,\mathbb{C}^N)$, is analogous to the one presented in \Cref{sec:isoInt}. More in detail, the method consists in setting $\widetilde{U}^{(0)}=U_{\tind}$ and then computing
\begin{equation}\label{eq:Iso2UNLit}
    \widetilde{U}^{(j+1)}=U_{\tind} - \frac{\dt}{2\hbar_N}P(\widetilde{U}^{(j)}S_0(\widetilde{U}^{(j)})^*)\widetilde{U}^{(j)},\qquad j=0,1,\ldots
\end{equation}
until a certain stopping criterion is satisfied.
Once the update has stopped at the $n_{\iter}^{\tind}$th iteration, one sets $\widetilde{U}=\widetilde{U}^{(n_{\iter}^{\tind})}$ and computes the updated basis as
\begin{equation}\label{eq:Iso2Uupd}
    U_{\tind+1}=\left(I-\frac{\dt}{2\hbar_N}P(\widetilde{U}S_0\widetilde{U}^*)\right)\widetilde{U}.
\end{equation}
Proceeding as in \Cref{prop:IsoCost}, it is easy to verify that the arithmetic complexity to solve problem \eqref{eq:midU} with the above algorithm \eqref{eq:Iso2UNLit}-\eqref{eq:Iso2Uupd} is
$O(N^2 r n_{\iter}^{\tind})$, %O(N^3 N_{\iter})
where $n_{\iter}^{\tind}$
it is the number of iterations required by the nonlinear step in the temporal interval $\Ical_{\tind}$.
Note that the numerical solution obtained with the time integration scheme \eqref{eq:IsoU} and implemented as above does not yield the same solution of the method in \Cref{sec:isoInt} because of the different nonlinear iteration.

%%%%%%%%%%%%%%%%%%%%%%%%%%%%%%%%%%%%%%%%%%%
\section{Approximation of the stream function}\label{sec:appot}

The computational cost of solving the discrete Laplace equation in \eqref{eq:meqt} for the stream matrix $P$ is the bottleneck of the algorithm, as shown in the proof of \Cref{prop:RKMKCost}.
One possibility to speed up its computation is to evaluate the stream function on an approximation of the vorticity matrix as follows.
%A very similar technique was introduced in \cite{CEV24} to filter large-scale components of the dynamics.

Let $0<\ap{N}\leq N-1$; we introduce the operator $\apT:\mathbb{C}^{N\times N}\rightarrow\mathbb{C}^{N\times N}$ that, when applied to $W$, sets to zero the $j$th diagonal of $W$ for any $|j|>\ap{N}$.
When considering matrices obtained as the image of the projection maps $p_N$ from \Cref{sec:zeitlin}, $\apT$ can be written as
\begin{equation}\label{eq:Top}
    \apT:\; W=\sum_{\ell=0}^{N-1}\sum_{m=-\ell}^{\ell}\ic\omega^{\ell m} T_{\ell,m}^N\;\longmapsto\;
    \sum_{\ell=0}^{N-1}\sum_{m=-\min\{\ell,\ap{N}\}}^{\min\{\ell,\ap{N}\}}\ic\omega^{\ell m} T_{\ell,m}^N.
\end{equation}
We point out that this truncation strategy has already appeared in \cite{CEV24}, where only the large-scale components of the dynamics of the 2D Euler equations are considered and a closure term is used to model their interaction with the small scales.
Using this approximation in the Laplace equation in \eqref{eq:meqt} yields the following evolution equation:
%given $Z(0)=Z_0\in\fru(N)$, find $Z\in C^1(0,T;\fru(N))$ such that
\begin{equation}\label{eq:meqt_hypred}
\left\{\begin{aligned}
    & \dot{Z} + [\ap{P}(Z),Z]_N = 0,\\
    & \Delta_N \ap{P} = \apT(Z).
\end{aligned}\right.
\end{equation}
%where $\Delta_N$ is the Laplace operator from \eqref{eq:lapl}.

To show that the flow \eqref{eq:meqt_hypred} is isospectral and Lie--Poisson we need the following technical, yet elementary, result.
\begin{lemma}\label{lem:Top}
    Let the operator $\apT:\fru(N)\rightarrow\fru(N)$ be defined as in \eqref{eq:Top} for some $0<\ap{N}\leq N-1$.
    Then, $\apT$ is linear, self-adjoint with respect to the Frobenius inner product and it commutes with $\Delta_{N}^{-1}$, the inverse of the Laplace operator from \eqref{eq:lapl}.
\end{lemma}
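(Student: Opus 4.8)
The plan is to verify the three claimed properties of $\apT$ directly from its definition on the orthonormal basis $\{T^N_{\ell,m}\}$. \textbf{Linearity} is immediate: zeroing out selected diagonals is a coordinatewise linear operation on entries, or equivalently, in the expansion \eqref{eq:Top}, $\apT$ acts by retaining a fixed subset of the basis coefficients $\omega^{\ell m}$ and discarding the rest, which is manifestly linear. One should also note that $\apT$ maps $\fru(N)$ to $\fru(N)$: since $(T^N_{\ell,m})^*$ is (up to sign) $T^N_{\ell,-m}$, zeroing the $j$th diagonal for $|j|>\ap N$ is symmetric under conjugate transposition, so $\apT(W)\in\fru(N)$ whenever $W\in\fru(N)$.

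For \textbf{self-adjointness}, the cleanest route is to observe that $\apT$ is the orthogonal projection (with respect to $\inprodF{\cdot}{\cdot}$, equivalently $\inprodN{\cdot}{\cdot}$, since they differ only by a positive scalar) onto the subspace spanned by $\{T^N_{\ell,m} : |m|\le \min\{\ell,\ap N\}\}$. Indeed, the matrices $E_{j}$ having support on a single diagonal $j$ are mutually Frobenius-orthogonal, and $\apT$ keeps exactly those with $|j|\le\ap N$; hence $\inprodF{\apT A}{B}=\inprodF{A}{\apT B}$ for all $A,B\in\mathbb{C}^{N\times N}$ by the standard fact that orthogonal projections onto a subspace are self-adjoint. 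Alternatively, one can argue entrywise: $\inprodF{\apT A}{B}=\sum_{|i-k|\le\ap N}\overline{(\apT A)_{ik}}\,B_{ik}=\sum_{|i-k|\le\ap N}\overline{A_{ik}}\,B_{ik}$, which is visibly symmetric in the roles of $A$ and $B$ up to conjugation, giving $\inprodF{\apT A}{B}=\inprodF{A}{\apT B}$.

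For \textbf{commutation with $\Delta_N^{-1}$}, the key point is that both operators are diagonal in the same basis. By \eqref{eq:lapl}, each $T^N_{\ell,m}$ is an eigenvector of $\Delta_N$, hence of $\Delta_N^{-1}$, with eigenvalue $-1/(\ell(\ell+1))$ (for $\ell\ge 1$), and $\Delta_N^{-1}$ annihilates the $i\mathbb{R}I$ part, which is the $\ell=0$ component $T^N_{0,0}$; meanwhile $\apT$ acts on the coefficient of $T^N_{\ell,m}$ by multiplication by the indicator $\mathbf 1_{|m|\le\ap N}$. Two operators that are simultaneously diagonalized by the basis $\{T^N_{\ell,m}\}$ commute, so $\apT\Delta_N^{-1}=\Delta_N^{-1}\apT$ on all of $\mathbb{C}^{N\times N}$ (or at least on $\fru(N)$, which is all that is needed). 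One small thing to check is that the expansion map $W\mapsto\{\omega^{\ell m}\}$ is well-defined on $\fru(N)$, i.e.\ that $\{iT^N_{\ell,m}\}$ spans $\fru(N)$, which follows from the quantization setup recalled in \Cref{sec:zeitlin}.

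I do not expect any genuine obstacle here; the statement is labelled ``technical, yet elementary,'' and the only mild subtlety is bookkeeping: making sure the diagonal-support statement (each $T^N_{\ell,m}$ lives on the $(-m)$th diagonal) is used correctly so that the index sets in \eqref{eq:Top} match the ``$|j|>\ap N$'' description, and remembering that $\Delta_N^{-1}$ has a kernel (the $\ell=0$ mode) so that ``$\Delta_N^{-1}$'' really means the pseudo-inverse that descends to the bijection $\mathfrak{su}(N)\to\mathfrak{su}(N)$ — on that kernel direction $\apT$ and $\Delta_N^{-1}$ trivially commute since $T^N_{0,0}=\tfrac{i}{\sqrt{?}}I$ is the zero-diagonal matrix, retained by $\apT$, and sent to $0$ by $\Delta_N^{-1}$ either way.
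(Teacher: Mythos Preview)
Your argument is correct. The paper does not actually supply a proof of this lemma: it is stated as a ``technical, yet elementary'' result and used immediately afterwards without justification. Your proposal fills in exactly the details the paper omits, via the natural route --- checking that $\apT$ is the Frobenius-orthogonal projection onto the span of the diagonals $|j|\le\ap N$ (hence self-adjoint) and that both $\apT$ and $\Delta_N^{-1}$ are diagonal in the common eigenbasis $\{T^N_{\ell,m}\}$ (hence commute). One cosmetic point: in your last paragraph you write $T^N_{0,0}=\tfrac{i}{\sqrt{?}}I$; you should either compute the constant or simply say $T^N_{0,0}$ is proportional to $I$, which is all that matters for the argument.
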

Owing to the properties of $\apT$ from \Cref{lem:Top} and to the self-adjointness of $\Delta_N$ with respect to the Frobenius inner product, problem \eqref{eq:meqt_hypred} is still isospectral 
 and Lie--Poisson, but with an approximate Hamiltonian,
resulting from the truncation introduced by \eqref{eq:Top}, that is
\begin{equation}\label{eq:apHam}
    \ap{\Ham}(Z) = -\frac12 \inprodN{Z}{\ap{P}(Z)}.
\end{equation}
% \end{lemma}
The error in the approximation of the Hamiltonian can be bounded by the error in the approximation of the initial condition and by the Hamiltonian approximation at the initial time, as follows.
For any $t\in\Ical$, let $W(t)$ be the solution of \eqref{eq:meqt} with initial condition $W_0$ and let $Z(t)$ be solution of \eqref{eq:meqt_hypred} with initial condition $Z_0$; then
$$|\Ham(W(t))-\ap{\Ham}(Z(t))|=|\Ham(W_0)-\ap{\Ham}(Z_0)|
\leq |\Ham(W_0)-\Ham(Z_0)|+|\Ham(Z_0)-\ap{\Ham}(Z_0)|.$$
The approximation introduced by $\apT$ has no effect on the Casimir invariants.
\begin{remark}
    When $Z_0=W_0$, the error in the approximation of the Hamiltonian is only due to the initial approximation error $|\Ham(W_0)-\ap{\Ham}(W_0)|$.    
\end{remark}
Concerning the accuracy of the approximation, we can establish an error bound analogous to \Cref{prop:err} with a further term that depends on the truncation \eqref{eq:Top} and goes to zero as $\ap{N}$ tends to $N$.
\begin{proposition}\label{prop:errT}
Let $W_0\in\mathfrak{su}(N)$. For any $t\in(0,T]$, let $W(t)$ be solution of \eqref{eq:meqt} with initial condition $W_0$, let $Z(t)$ be the solution of \eqref{eq:meqt_hypred} with initial condition $Z_0=\svd{W_0}$ and 
let $\svd{W(t)}$ be the best rank-$r$ approximation of $W(t)$. Then,
\begin{equation}\label{eq:errZWsvd}
\begin{aligned}
    \normF{Z(t)-\svd{W(t)}}
    \leq &\,
    \left(1 +\dfrac{\normF{W_0}}{\hbar_N\ap{K}}\right) (e^{\ap{K} t}-1) \sqrt{\sum_{i=r+1}^{N} \sigma_i^2(W_0)}\\
   & + \normF{W_0}\int_0^t \normF{P(W(s))-\ap{P}(W(s))}e^{\ap{K} (t-s)}\,ds,
   \end{aligned}
\end{equation}
with $\ap{K}$ Lipschitz continuity constant of
$X_{\ap{\Ham}}=-\ad^*_{\nabla \ap{H}}$
in the Frobenius norm.
\end{proposition}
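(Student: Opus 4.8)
The plan is to mimic the proof of \Cref{prop:err}, tracking the extra error introduced by replacing $P$ with $\ap{P}$. First I would write down the evolution equation for $\svd{W(t)}$ exactly as in \eqref{eq:evolSVD}, and the evolution equation for $Z$ from \eqref{eq:meqt_hypred}. Subtracting, I would split $\dot{Z}(t)-d_t(\svd{W(t)})$ into three pieces: the difference of Hamiltonian vector fields $X_{\ap{\Ham}}(Z)-X_{\ap{\Ham}}(\svd{W})$, a commutator term $[\svd{W},\ap{P}(W-\svd{W})]_N$ accounting for the neglected modes (as in \Cref{prop:err}), and a new term of the form $[\svd{W},(\ap{P}-P)(W)]_N$ (or equivalently $[\svd{W},P(W)-\ap{P}(W)]_N$, absorbed appropriately) accounting for the stream-function truncation. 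Here it is important that \Cref{lem:Top} guarantees $\apT$ commutes with $\Delta_N^{-1}$, so that $\ap{P}(Z)=\Delta_N^{-1}\apT(Z)=\apT(\Delta_N^{-1}Z)$, which keeps $X_{\ap{\Ham}}$ well-defined and, via the self-adjointness of $\apT$ and $\Delta_N^{-1}$, makes the argument structurally identical to that of \Cref{lem:LipsXH}; I would invoke the Lipschitz continuity of $X_{\ap{\Ham}}$ on the co-adjoint orbit $\mathcal{O}_{\U(N)}(W_0)$ with constant $\ap{K}$ exactly as stated.

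Next, the first term is bounded by $\ap{K}\normF{Z(t)-\svd{W(t)}}$ by that Lipschitz property. For the commutator term of neglected modes I would use linearity of the bracket and $\normF{\ap{P}(W-\svd{W})}\le \norm{\Delta_N^{-1}}_2\normF{\apT(W-\svd{W})}\le \tfrac12\normF{W-\svd{W}}=\tfrac12\normF{W_0-\svd{W_0}}$, together with $\normF{\svd{W(t)}}=\normF{\svd{W_0}}\le\normF{W_0}$, giving a contribution $\le \hbar_N^{-1}\normF{W_0}\,\normF{W_0-\svd{W_0}}$. For the truncation term I would bound $\normF{[\svd{W},P(W)-\ap{P}(W)]_N}\le 2\hbar_N^{-1}\normF{\svd{W(t)}}\,\normF{P(W(s))-\ap{P}(W(s))}\le 2\hbar_N^{-1}\normF{W_0}\,\normF{P(W(s))-\ap{P}(W(s))}$ (the factor of $2$ in the stated bound presumably being absorbed by the precise normalization of $[\cdot,\cdot]_N$, or simply kept as a constant). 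Assembling, $\normF{\dot{Z}-d_t(\svd{W})}$ is bounded by $\ap{K}\normF{Z-\svd{W}}$ plus a forcing term $\hbar_N^{-1}\normF{W_0}\big(\normF{W_0-\svd{W_0}}+\normF{P(W(s))-\ap{P}(W(s))}\big)$.

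Finally I would apply Gr\"onwall's inequality. The constant-in-$s$ part of the forcing integrates to $\hbar_N^{-1}\normF{W_0}\normF{W_0-\svd{W_0}}\cdot\ap{K}^{-1}(e^{\ap{K}t}-1)$, which together with the genuine best-approximation error $\normF{W_0-\svd{W_0}}=\sqrt{\sum_{i=r+1}^N\sigma_i^2(W_0)}$ and a $+1$ absorbing the direct $\normF{Y-\svd{W}}$-type contribution (mirroring the remark after \Cref{prop:err}) produces the first line of \eqref{eq:errZWsvd}; the time-dependent part of the forcing integrates to the convolution $\normF{W_0}\int_0^t\normF{P(W(s))-\ap{P}(W(s))}e^{\ap{K}(t-s)}\,ds$, giving the second line. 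The main obstacle I anticipate is not any single estimate but the bookkeeping: one must ensure that all the Frobenius norms of $\svd{W(t)}$, $W(t)-\svd{W(t)}$, and $\apT(W(t)-\svd{W(t)})$ are controlled by time-independent quantities, which relies crucially on isospectrality of the flow \eqref{eq:meqt_hypred} (so $\normF{Z(t)}$ is constant) and of the full Zeitlin flow (so $\normF{W(t)-\svd{W(t)}}$ is constant by Eckart--Young--Mirsky), together with the commuting property of $\apT$ with $\Delta_N^{-1}$ from \Cref{lem:Top} to be sure $X_{\ap\Ham}$ admits the same Lipschitz analysis as $X_\Ham$; getting the precise shape of the two terms in \eqref{eq:errZWsvd} (the grouping of the $+1$ and the $\normF{W_0}/(\hbar_N\ap{K})$ factor) then follows by the same integral computation as in \Cref{prop:err}.
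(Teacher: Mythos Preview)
Your plan is correct and follows essentially the same Gr\"onwall strategy as the paper. The one structural difference is the point at which you apply the Lipschitz bound: you compare $X_{\ap{\Ham}}(Z)$ with $X_{\ap{\Ham}}(\svd{W})$, whereas the paper compares $X_{\ap{\Ham}}(Z)$ with $X_{\ap{\Ham}}(W)$ and only afterwards splits $\normF{W-Z}\le\normF{W-\svd{W}}+\normF{\svd{W}-Z}$. That extra $\ap{K}\,\normF{W-\svd{W}}$ contribution is exactly what, after integrating $\int_0^t e^{\ap{K}(t-s)}\,ds$, produces the ``$+1$'' in the prefactor $(1+\normF{W_0}/(\hbar_N\ap{K}))$; with your splitting the $+1$ simply does not appear, so you would obtain a marginally sharper (but otherwise identical) bound. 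This also explains why you were unsure where the $+1$ comes from: it is an artifact of the paper's slightly coarser decomposition, not something your argument is missing. Likewise, in the paper the truncation term is written as $[W,P(W)-\ap{P}(W)]_N$ (with the full $W$) rather than your $[\svd{W},P(W)-\ap{P}(W)]_N$, but both are bounded in the same way; the paper records the bound as $\normF{W_0}\,\normF{P(W)-\ap{P}(W)}$ without the explicit $2\hbar_N^{-1}$ factor you flagged.
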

\begin{proof}
The reasoning is analogous to the proof of \Cref{prop:err}; here we need to consider the extra term associated with the approximation of $\Ham$ by $\ap{\Ham}$.

Using the evolution equations \eqref{eq:evolSVD} for $\Wsvd$ and \eqref{eq:meqt_hypred} for $Z$ gives
\begin{equation}\label{eq:bound1T}
\begin{aligned}
    \normF{\dot{Z}-d_t(\svd{W(t)})}
     =\,& \normF{X_{\ap{\Ham}}(Z)-[\Wsvd,P(W)]_N}
    \leq \normF{X_{\ap{\Ham}}(W)-X_{\ap{\Ham}}(Z)}\\
    & + \normF{X_{\Ham}(W)-[\Wsvd,P(W)]_N}
    + \normF{X_{\Ham}(W)-X_{\ap{\Ham}}(W)}.
\end{aligned}
\end{equation}
The first two terms can be bounded as in the proof of \Cref{prop:err} using the Lipschitz continuity of the Hamiltonian vector field $X_{\ap{\Ham}}$ and the linearity of the matrix commutator, resulting in
\begin{equation*}
\begin{aligned}
\normF{X_{\ap{\Ham}}(W)-X_{\ap{\Ham}}(Z)}+ \normF{X_{\Ham}(W)-[\Wsvd,P(W)]_N}
& \leq \ap{K} \normF{W-Z}\\
%+\norm{\Delta_N^{-1}}_2\norm{W_0}
+\hbar_N^{-1} \normF{W_0}
\normF{W-\Wsvd}.
\end{aligned}
\end{equation*}
%with $\gamma:=\norm{\Delta_N^{-1}}_2\norm{W_0}$.
The last term in \eqref{eq:bound1T} can be bounded as
\begin{equation*}
    \normF{X_{\Ham}(W)-X_{\ap{\Ham}}(W)}
    = \normF{[W,P(W)-\ap{P}(W)]_N}
    \leq \normF{P(W)-\ap{P}(W)}\normF{W_0}.
\end{equation*}
Combining the bounds above we get, for any $t\geq 0$,
\begin{equation*}
\begin{aligned}
    \normF{\dot{Z}-d_t(\svd{W(t)})}
    \leq &\, (\ap{K}+\hbar_N^{-1}\normF{W_0})\normF{W-\Wsvd} + \ap{K}\normF{\Wsvd-Z}\\
    & +\normF{W_0} \normF{P(W)-\ap{P}(W)}.
\end{aligned}
\end{equation*}
Using Gronwall's inequality and the fact that $\normF{W(t)-\svd{W(t)}}=\normF{W_0-\svd{W_0}}$ yields the conclusion.
\end{proof}
Note that the last term of \eqref{eq:errZWsvd}
can be further bounded as
\begin{equation*}
    \int_0^t \normF{P(W(s))-\ap{P}(W(s))}e^{\ap{K} (t-s)}\,ds
    \leq \dfrac12\int_0^t \normF{W(s)-\apT(W(s))}e^{\ap{K} (t-s)}\,ds.
\end{equation*}

%%%%%%%%%%%%%%%%%%%%%%%%%%%%%%%
\subsection{Computational complexity of the approximate dynamics}

The approximation of the stream function introduced in \eqref{eq:Top} can be applied to both the Zeitlin model \eqref{eq:meqt} and to the reconstruction equation \eqref{eq:rec}. In both cases the computational complexity is lowered by a factor $N$ leading to a complexity quadratic in $N$ for the full-rank Zeitlin model and linear in $N$ when solving the reconstruction equation, as shown in the next results.

\begin{proposition}%\label{prop:IsoCost}
Let us consider the numerical time integration scheme \eqref{eq:Iso2NLit}-\eqref{eq:Iso2upd} for the approximation of problem \eqref{eq:meqt_hypred} over the temporal interval $\Ical_{\tind}$, $\tind\geq 0$.
The arithmetic complexity of the algorithm is
$O(N^2\ap{N} n_{\iter}^{\tind})$
where $n_{\iter}^{\tind}$ is the
number of iterations required by the nonlinear step \eqref{eq:Iso2NLit}.
\end{proposition}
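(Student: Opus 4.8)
The plan is to mirror the proof of \Cref{prop:IsoCost} almost verbatim, the only change being that the stream matrix in \eqref{eq:meqt_hypred} is computed on the truncated vorticity $\apT(Z)$ rather than on $Z$ itself, which is what produces the gain of one factor of $N$. First I would observe that, since $\apT$ only keeps the diagonals $|j|\leq \ap{N}$ of $Z$, the right-hand side of the Laplace equation $\Delta_N\ap{P}=\apT(Z)$ has nonzero diagonals only for $|m|\leq\ap{N}$. Recalling from \Cref{sec:isoInt} that $\Delta_N$ decouples into the tridiagonal blocks $\{\Delta^m\}$, solving $\Delta_N\ap{P}=\apT(Z)$ amounts to solving only the $\ap{N}+1$ systems $\Delta^m\ap{p}_m=(\apT(Z))_m$ for $m=0,\ldots,\ap{N}$ (the diagonals of $\ap{P}$ for $|m|>\ap{N}$ being forced to zero, consistently with $\apT$ commuting with $\Delta_N^{-1}$ by \Cref{lem:Top}). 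Each such system is tridiagonal of size at most $N$, so Thomas' algorithm solves it in $O(N)$ operations; hence the total cost of computing $\ap{P}(Z)$ from $Z$ is $O(N\ap{N})$, down from the $O(N^2)$ of the untruncated case.

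Next I would analyze a single iteration of \eqref{eq:Iso2NLit}. Here one can exploit that $\ap{P}(Z)$ is a banded matrix with bandwidth $\ap{N}$. The bracket $[\ap{P}(\widetilde{Z}^{(j)}),\widetilde{Z}^{(j)}]_N$ requires multiplying the banded matrix $\ap{P}(\widetilde{Z}^{(j)})$ by the (generally dense) matrix $\widetilde{Z}^{(j)}$; since each row of $\ap{P}$ has at most $2\ap{N}+1$ nonzero entries, this product costs $O(N^2\ap{N})$ instead of $O(N^3)$. The last term $\ap{P}(\widetilde{Z}^{(j)})\widetilde{Z}^{(j)}\ap{P}(\widetilde{Z}^{(j)})$ is computed as two successive products, the first banded-times-dense giving a dense matrix in $O(N^2\ap{N})$ operations, the second dense-times-banded again in $O(N^2\ap{N})$. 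The update \eqref{eq:Iso2upd} is of the same form and costs $O(N^2\ap{N})$ as well. Thus each of the $n_{\iter}^{\tind}+1$ steps costs $O(N^2\ap{N})$, and adding the $O(N\ap{N})$ cost of each stream-matrix solve (dominated by the above) gives a total of $O(N^2\ap{N}\, n_{\iter}^{\tind})$.

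The only point that needs care — and the step I expect to be the mild obstacle — is making sure one really does use banded matrix–matrix multiplication rather than a naive dense routine, and that the banded structure of $\ap{P}(Z)$ genuinely follows from \eqref{eq:Top} and \Cref{lem:Top}: since $\apT(Z)$ has bandwidth $\ap{N}$ and $\Delta_N^{-1}$ preserves each diagonal (it acts blockwise), $\ap{P}(Z)=\Delta_N^{-1}\apT(Z)$ inherits the same bandwidth $\ap{N}$. Once this is recorded, every matrix–matrix product appearing in \eqref{eq:Iso2NLit}--\eqref{eq:Iso2upd} has one banded factor, so the $O(N^3)$ count of \Cref{prop:IsoCost} is replaced by $O(N^2\ap{N})$ throughout, and summing over the $n_{\iter}^{\tind}+1$ nonlinear iterations yields the claimed $O(N^2\ap{N}\,n_{\iter}^{\tind})$ complexity.
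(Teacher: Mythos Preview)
Your proposal is correct and follows essentially the same approach as the paper's proof: reduce the stream-matrix solve to $\ap{N}+1$ tridiagonal systems at cost $O(N\ap{N})$, then observe that the banded structure of $\ap{P}$ makes each matrix--matrix product in \eqref{eq:Iso2NLit}--\eqref{eq:Iso2upd} cost $O(N^2\ap{N})$ rather than $O(N^3)$. You supply slightly more detail (invoking \Cref{lem:Top} to justify the bandwidth of $\ap{P}$ and spelling out the banded-times-dense count), but the argument is the same.
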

\begin{proof}
Repeating the steps of the proof of \Cref{prop:IsoCost}, one has that
the stream matrix $\ap{P}$ satisfies an approximate Laplacian problem and it can be computed by solving $\ap{N}+1$ linear systems 
$\Delta^m p_m = z_m$ for $m=0,\ldots,\ap{N}$,
where $p_m$ and $z_m$ denote the $m$th diagonals of $P$ and $Z$, respectively.
Since each system has size $N-m$ and it is tridiagonal, the number of operations required is
$\sum_{m=0}^{\ap{N}} (N-m)$, thus leading arithmetic complexity $O(N\ap{N})$.

Moreover, at the $j$th iteration of the nonlinear solver \eqref{eq:Iso2NLit}, the computation of the bracket involves two matrix-matrix multiplications that depend on the sparse stream matrix. This requires $O(N^2\ap{N})$
operations for each update of $\widetilde{W}^{(j)}$ and $W_{\tind}$, namely $n_{\iter}^{\tind}+1$ times.
\end{proof}

The approximation of the stream function \eqref{eq:Top} can be combined to the factorization of the state proposed in \Cref{sec:Sfixed} leading to the evolution equation: given $U(t_{0})=U_0\in\St(r,\mathbb{C}^{N})$, find $U(t)\in\St(r,\mathbb{C}^N)$ such that
\begin{equation}\label{eq:apUS0}
    \left\{
    \begin{aligned}
        & \dot{U} + \dfrac{1}{\hbar_N} \ap{P}(US_0U^*) U=0,\\
        & \Delta_N \ap{P} = \apT(US_0U^*).
        \end{aligned}\right.
 \end{equation}
\begin{proposition}
    Let us consider the $N_s$-stage explicit RK-MK time integration scheme in \Cref{algo:RKMK} for the approximation of problem \eqref{eq:apUS0} over the temporal interval $\Ical_{\tind}$, $\tind\geq 0$.
Let $\psi$ be the Cayley transform \eqref{eq:cay}.
The arithmetic complexity is %of the algorithm is
\begin{equation*}
    O(N r^2 N_s^2) + O(r^3 N_s^4) + O(N\ap{N} r N_s).
\end{equation*}
\end{proposition}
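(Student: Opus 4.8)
The plan is to transcribe the proof of \Cref{prop:RKMKCost} almost verbatim, since the only difference between problems \eqref{eq:US0} and \eqref{eq:apUS0} at the algorithmic level is that the map $\Lcal_{S_0}$ now involves the truncated stream matrix $\ap{P}$ in place of $P$, equivalently that $\Fcal_{S_0}(U)=-\hbar_N^{-1}P(US_0U^*)U$ is replaced by $\ap{\Fcal}_{S_0}(U):=-\hbar_N^{-1}\ap{P}(US_0U^*)U$. I would first note that all the low-rank bookkeeping underlying \Cref{prop:RKMKCost} carries over unchanged: the splitting of $\Lcal_{S_0}(U_\tind^i)$ into a bounded number of rank-$r$ blocks, the induced low-rank form $\Lambda_i=\alpha_i\beta_i^*$ of the inverse Cayley tangent $\dcoo_{\Omega_\tind^i}^{-1}$, the rank bound $k_i:=\rank(\Omega_\tind^i)\leq 3r(i-1)$, and the $O(Nrk_i)+O(k_i^2r)+O(k_i^3)$ cost of each Cayley evaluation $U_\tind^i=\coo(\Omega_\tind^i)U_\tind$. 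None of these facts uses anything about $P$ beyond the observation that $\Lcal_{S_0}(U)$ is a sum of $O(1)$ rank-$r$ terms, a property the truncation $\apT$ preserves. Summing these contributions over the $N_s$ stages with $k_i=O(rN_s)$ then reproduces, exactly as in \Cref{prop:RKMKCost}, the $O(Nr^2N_s^2)+O(r^3N_s^4)$ part of the estimate.

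The only step I would genuinely redo is the per-stage cost of evaluating $\ap{\Fcal}_{S_0}$, which replaces the $O(N^2r)$ contribution of \Cref{prop:RKMKCost}; I would argue in three steps. First, since $S_0$ is diagonal --- it holds the eigenvalues of $W_0$ --- forming $\apT(US_0U^*)$ requires only its $2\ap{N}+1$ nonzero diagonals, namely $O(N\ap{N})$ entries, each obtained in $O(r)$ operations, hence $O(N\ap{N}r)$ in total. Second, because $\Delta_N$ acts diagonal-by-diagonal, solving $\Delta_N\ap{P}=\apT(US_0U^*)$ splits into $\ap{N}+1$ tridiagonal systems of sizes $N-m$, $m=0,\dots,\ap{N}$, handled by the Thomas algorithm in $\sum_{m=0}^{\ap{N}}(N-m)=O(N\ap{N})$ operations; the same diagonal-by-diagonal structure shows that $\ap{P}$ is banded with bandwidth $\ap{N}$. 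Third, by this banded structure the product $\ap{P}U$ costs $O(N\ap{N}r)$. Hence $\ap{\Fcal}_{S_0}$ is evaluated in $O(N\ap{N}r)$ operations per stage, and summing over the $N_s$ stages gives the last term $O(N\ap{N}rN_s)$. Collecting the three contributions yields the claimed complexity.

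The only point requiring care --- not a genuine obstacle --- is to ensure that the truncation does not silently reintroduce an $O(N^2)$ cost: one must never materialize $\apT(US_0U^*)$ as a dense $N\times N$ matrix, and one must exploit the bandwidth of $\ap{P}$ both in the Laplace solve and in the multiplication $\ap{P}U$. Once this is respected, the rest of the argument is a line-by-line copy of the proof of \Cref{prop:RKMKCost}.
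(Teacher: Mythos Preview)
Your proposal is correct and follows essentially the same route as the paper: reuse the low-rank bookkeeping of \Cref{prop:RKMKCost} verbatim to obtain the $O(Nr^2N_s^2)+O(r^3N_s^4)$ terms, and redo only the per-stage evaluation of $\ap{\Fcal}_{S_0}(U)$ by (i) assembling the $\ap{N}+1$ needed diagonals of $US_0U^*$ in $O(N\ap{N}r)$, (ii) solving the $\ap{N}+1$ tridiagonal systems in $O(N\ap{N})$, and (iii) applying the banded $\ap{P}$ to $U$. Your bound $O(N\ap{N}r)$ for step (iii) is in fact tighter than the paper's stated $O(Nr)$ for that product, but this does not affect the overall complexity since step (i) already dominates.
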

\begin{proof}
The reasoning is analogous to the proof of \Cref{prop:RKMKCost}. The only part that changes is the cost to evaluate the term
$\ap{\Fcal}(U):=\ap{P}(US_0U)U$ for any $U\in\mathbb{C}^{N\times r}$.

To solve the approximate stream matrix one needs to reconstruct the $m$th diagonals of the state $Y:=US_0U$ only for $0\leq m\leq \ap{N}$. The arithmetic complexity of computing the $m$th diagonal of $Y$ is $O(r(N-m))$.
The total cost to assemble the right-hand side of the approximate Laplace equation is thus $r\sum_{m=0}^{\ap{N}} (N-m)$, i.e.,
$O(N\ap{N}r)$.
The solution of the approximate Laplace equation is then $O(N\ap{N})$ and the matrix-matrix multiplication costs $O(Nr)$ owing to the sparsity of $\ap{P}$.

The result follows by combining this cost with the arithmetic complexities derived in the proof of \Cref{prop:RKMKCost} for the intermediate steps of the RK-MK time integrator.
\end{proof}

Using a similar argument as in the proof above one can show the following result for the implicit midpoint rule applied to the reconstruction equation on $\St(r,\mathbb{C}^N)$, as described in \Cref{sec:midU}.
\begin{proposition}%\label{prop:IsoCost}
Let us consider the implicit midpoint rule as in \eqref{eq:Iso2UNLit}-\eqref{eq:Iso2Uupd} for the approximation of problem \eqref{eq:apUS0} over the temporal interval $\Ical_{\tind}$, $\tind\geq 0$.
The arithmetic complexity of the algorithm is
$O(N\ap{N} r n_{\iter}^{\tind})$
where $n_{\iter}^{\tind}$ is the
number of iterations required by the nonlinear step \eqref{eq:Iso2UNLit}.
\end{proposition}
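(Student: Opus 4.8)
The plan is to repeat the bookkeeping used for the untruncated midpoint scheme (the $O(N^2 r\, n_{\iter}^{\tind})$ estimate noted after \eqref{eq:Iso2Uupd}), inserting the savings produced by the truncation operator $\apT$ in the same way the complexity proof for \eqref{eq:meqt_hypred} improved on \Cref{prop:IsoCost}. In each subinterval $\Ical_{\tind}$ the algorithm consists of the $n_{\iter}^{\tind}$ inner iterations \eqref{eq:Iso2UNLit}, with $\ap{P}$ replacing $P$, followed by the single update \eqref{eq:Iso2Uupd}; each of these $n_{\iter}^{\tind}+1$ steps requires exactly one evaluation of $\ap{\Fcal}(U):=\ap{P}(US_0U^*)\,U$ for some $U\in\St(r,\mathbb{C}^{N})$, so it suffices to show that this evaluation costs $O(N\ap{N} r)$ operations.

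I would split that evaluation into three parts, all exploiting the rank-$r$ factorization and the banded structure. First, since $S_0$ is diagonal and $\Delta_N^{-1}$ acts diagonal by diagonal through the tridiagonal blocks $\{\Delta^m\}$ of \Cref{sec:isoInt}, only the $m$th diagonals of $US_0U^*$ with $0\le m\le\ap{N}$ are needed; assembling the $m$th diagonal ($N-m$ entries, each an $O(r)$ inner product) costs $O(r(N-m))$, so the right-hand side $\apT(US_0U^*)$ of the Laplace equation in \eqref{eq:apUS0} is built in $r\sum_{m=0}^{\ap{N}}(N-m)=O(N\ap{N} r)$ operations. Second, solving the tridiagonal systems $\Delta^m p_m=\big(\apT(US_0U^*)\big)_m$ for $m=0,\dots,\ap{N}$ by the Thomas algorithm costs $\sum_{m=0}^{\ap{N}}O(N-m)=O(N\ap{N})$. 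Third, by \Cref{lem:Top} the matrix $\ap{P}=\Delta_N^{-1}\apT(US_0U^*)$ is supported on the diagonals $|m|\le\ap{N}$, hence has $O(N\ap{N})$ nonzero entries, so the product $\ap{P}\,U$ with $U\in\mathbb{C}^{N\times r}$ costs $O(N\ap{N} r)$. Summing, one evaluation of $\ap{\Fcal}$ is $O(N\ap{N} r)$, and multiplying by the $n_{\iter}^{\tind}+1$ steps performed in $\Ical_{\tind}$ gives the claimed $O(N\ap{N} r\, n_{\iter}^{\tind})$.

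The one point deserving care — and the only place the argument genuinely differs from the untruncated midpoint proof — is the sparsity assertion in the third part: one must invoke the commutation of $\apT$ with $\Delta_N^{-1}$ from \Cref{lem:Top} to conclude that $\ap{P}$ inherits the bandwidth $\ap{N}$ of its argument, so that neither the Laplace solve nor the subsequent multiplication ever touches a dense $N\times N$ array. Everything else is a routine transcription of the estimates in the proofs of \Cref{prop:IsoCost} and of the preceding propositions in this section, so I do not expect any real obstacle.
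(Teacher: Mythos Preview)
Your argument is correct and matches the paper's own treatment: the paper does not give an explicit proof here but simply says the result follows ``using a similar argument as in the proof above'' (the RK--MK proposition for \eqref{eq:apUS0}), and your three-part breakdown of the cost of $\ap{\Fcal}(U)$---assembling the $\ap{N}+1$ diagonals of $US_0U^*$ in $O(N\ap{N}r)$, the tridiagonal solves in $O(N\ap{N})$, and the banded product $\ap{P}U$ in $O(N\ap{N}r)$---is exactly that argument. The only cosmetic difference is that the bandedness of $\ap{P}$ follows already by construction (one solves only for the diagonals $|m|\le\ap{N}$), so invoking the commutation in \Cref{lem:Top} is not strictly needed, though it is of course a valid alternative justification.
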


%%%%%%%%%%%%%%%%%%%%%%%%%%%%%%%%%%%%%

\section{Approximate dynamics via splitting}\label{sec:splitting}

Another possibility to perform a low-rank approximation of problem \eqref{eq:meqt} is to consider the factorization of the approximate state, at any time $t\in\Ical$, given by
\begin{equation}\label{eq:factSt}
    Y(t)=U(t)S(t) U^*(t).
\end{equation}
At the initial time we set $U(t_0)=U_0\in\mathbb{C}^{N\times r}$ where the columns of $U_0$ are the eigenvectors associated with the $r$ largest eigenvalues of $W_0$,
%initial condition of the original problem \eqref{eq:meqt},
and $S_0\in\mathbb{C}^{r\times r}$ contains the $r$ largest eigenvalues of $W_0$.
Moreover, we require that, for any $t\in\Ical$,
$U(t)$ belongs to the Stiefel manifold $\St(r,\mathbb{C}^{N})$
and $S(t)\in\mathbb{C}^{r\times r}$ is skew-Hermitian.
These two conditions ensure that $Y(t)\in\fru(N)$ for any $t\in\Ical$.
Moreover, if $U$ belongs to $\St(r,\mathbb{C}^{N})$, then $\Tr(Y^k)=\Tr(S^k)$ for any $k\geq 1$.
Note that, differently from the factorization \eqref{eq:fact}, we allow both $U$ and $S$ to vary in time.
The factorization \eqref{eq:factSt} is less preferable than \eqref{eq:fact} when dealing with isospectral flows but it is suitable for more general flows on $\fru(N)$ and it can be easily adapted to other matrix algebras.

To derive evolution equations for the factors $U$ and $S$ in \eqref{eq:factSt}, we propose the following decomposition of the velocity field
\begin{equation}\label{eq:decomp}
    X_{\Ham}(Y)=\underbrace{\Pi_{\Rcal(U)}X_{\Ham}(Y)\Pi_{\Rcal^{\perp}(U)}+\Pi_{\Rcal^{\perp}(U)}X_{\Ham}(Y)\Pi_{\Rcal(U)}}_{=:\Pi_U(X_{\Ham}(Y))}
    + \underbrace{\Pi_{\Rcal(U)}X_{\Ham}(Y)\Pi_{\Rcal(U)}}_{=:\Pi_S(X_{\Ham}(Y))},
\end{equation}
where $\Pi_{\Rcal(U)}$ is the orthogonal projection onto the range of $U$.
Note that such decomposition holds for the orthogonal, with respect to the Frobenius norm, projection onto $T_Y\mathcal{M}_r$ of any vector field, that is
$\Pi_{T_Y\mathcal{M}_r}X=\Pi_U(X)+\Pi_S(X)$ for any $X\in\mathbb{C}^{N\times N}$ and $Y\in\Mr$.
Exploiting the decomposition of $X_{\Ham}$ introduced above we can split the evolution equation for $Y$ into
the following evolution equations:
\begin{align}
         & \dot{S}=U^*X_{H}(USU^*)U=-[U^*P(USU^*)U,S]_N,\label{eq:Scont}\\
         & \dot{U}+\dfrac{1}{\hbar_N}(I-UU^*)P(USU^*)U = 0.\label{eq:Ucont}
\end{align}
This system retains the geometric properties of the Zeitlin model, as shown in the next results.
\begin{proposition}\label{prop:LPIsoROMS}
Let $\mathbb{U}\in\St(r,\mathbb{C}^N)$ be fixed. Then, problem \eqref{eq:Scont} with $U(t)=\mathbb{U}$, for any $t\in\Ical$, is isospectral and Lie--Poisson on the dual of $\fru(r)$ with Hamiltonian %given by
\begin{equation*}
    H_{\mathbb{U}}(S) := H(\mathbb{U}S\mathbb{U}^*)=-\frac12 \inprodN{\mathbb{U}S\mathbb{U}^*}{P(\mathbb{U}S\mathbb{U}^*)}
    %\Tr(P(\mathbb{U}S\mathbb{U}^*)\mathbb{U}S^*\mathbb{U}^*).
\end{equation*}
\end{proposition}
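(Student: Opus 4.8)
The statement asks to show that, with $U\equiv\mathbb{U}$ fixed, equation \eqref{eq:Scont} is an isospectral Lie--Poisson flow on $\fru(r)^*\simeq\fru(r)$ with the stated reduced Hamiltonian $H_{\mathbb{U}}$. The natural strategy is to recognize that with $U$ frozen, \eqref{eq:Scont} is exactly an Euler--Arnold (Lie--Poisson) equation on $\fru(r)$ of the same structural form as the Zeitlin model \eqref{eq:meqt}, just with a different ``stream'' operator. I would proceed in three steps: (i) verify that $S(t)$ stays in $\fru(r)$ and that the flow is isospectral; (ii) identify the correct $\ad^*$ operator and the effective Laplace-type operator so that \eqref{eq:Scont} reads $\dot S + \ad^*_{\delta_S H_{\mathbb{U}}} S = 0$; (iii) conclude by the standard fact that such an equation is Lie--Poisson on the dual Lie algebra.

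\textbf{Step (i): skew-Hermitian invariance and isospectrality.} Since $U^*P(USU^*)U$ is skew-Hermitian whenever $S$ is (because $P$ maps $\fru(N)\to\fru(N)$, being built from $\Delta_N^{-1}$ applied to a skew-Hermitian matrix, and conjugation by $\mathbb{U}$ together with the congruence $A\mapsto \mathbb{U}^*A\,\mathbb{U}$ preserves skew-Hermiticity), the right-hand side $-[\,U^*P(USU^*)U,\,S\,]_N$ is a commutator of two skew-Hermitian matrices divided by $\hbar_N$, hence skew-Hermitian. Therefore $\fru(r)$ is invariant under the flow. Writing the right-hand side as $\ad^*_{P_{\mathbb{U}}(S)}S$ with $P_{\mathbb{U}}(S):=U^*P(USU^*)U\in\fru(r)$ and $\ad^*_Q S=-[Q,S]_N$, the flow is of Lax form $\dot S=-[\,\hbar_N^{-1}P_{\mathbb{U}}(S),\,S\,]$, so by the standard Lax-pair argument (or directly: $S(t)=R(t)S_0R(t)^*$ for the unitary solving $\dot R=-\hbar_N^{-1}P_{\mathbb{U}}(S)R$) the spectrum of $S(t)$ is constant; this is the isospectrality. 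I would note that this mirrors the discussion in \Cref{sec:zeitlin} for the Zeitlin model itself.

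\textbf{Step (ii)--(iii): Lie--Poisson structure via the Hamiltonian.} The key computation is to show that the effective stream operator is minus the variational derivative of $H_{\mathbb{U}}$. Compute $\delta_S H_{\mathbb{U}}$: since $H_{\mathbb{U}}(S)=-\tfrac12\inprodN{\mathbb{U}S\mathbb{U}^*}{P(\mathbb{U}S\mathbb{U}^*)}$ and $P=\Delta_N^{-1}$ is self-adjoint with respect to the (scaled) Frobenius inner product, differentiating gives $\langle \delta_S H_{\mathbb{U}},\,\delta S\rangle_{F,N}=-\inprodN{\mathbb{U}\,\delta S\,\mathbb{U}^*}{P(\mathbb{U}S\mathbb{U}^*)}=-\inprodN{\delta S}{\mathbb{U}^*P(\mathbb{U}S\mathbb{U}^*)\mathbb{U}}$, using $\mathbb{U}^*\mathbb{U}=I_r$. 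Hence $\delta_S H_{\mathbb{U}}=-P_{\mathbb{U}}(S)$, and \eqref{eq:Scont} becomes $\dot S=\ad^*_{P_{\mathbb{U}}(S)}S=-\ad^*_{\delta_S H_{\mathbb{U}}}S$, i.e., $\dot S+\ad^*_{\delta_S H_{\mathbb{U}}}S=0$. This is precisely the Lie--Poisson equation on $\fru(r)^*\simeq\fru(r)$ for the Hamiltonian $H_{\mathbb{U}}$, which (as recalled around \eqref{eq:LP}) is Hamiltonian with respect to the Lie--Poisson bracket on the dual; the $\SU(r)$ (here $\U(r)$) co-adjoint orbits are the symplectic leaves, which re-confirms isospectrality. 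I would close by remarking that $H_{\mathbb{U}}$ is conserved, being the Hamiltonian of a Lie--Poisson system.

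\textbf{Main obstacle.} The only non-routine point is the bookkeeping in Step (ii): one must use both the self-adjointness of $\Delta_N^{-1}$ (i.e.\ of $P$) with respect to $\inprodN{\cdot}{\cdot}$ and the identity $\mathbb{U}^*\mathbb{U}=I_r$ to push the congruence $A\mapsto \mathbb{U}^*A\mathbb{U}$ through the pairing cleanly, and to check that $P_{\mathbb{U}}$ plays the role of a bona fide symmetric ``inverse Laplacian'' on $\fru(r)$ so that the resulting equation is genuinely of Euler--Arnold type. Everything else is a direct transcription of the structure already established for \eqref{eq:meqt}.
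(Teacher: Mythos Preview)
The paper states this proposition without proof, so there is no argument to compare against; your proposal supplies exactly the verification the paper omits. Your three steps are correct: the Lax-pair observation for isospectrality, the computation $\delta_S H_{\mathbb{U}}=-\mathbb{U}^*P(\mathbb{U}S\mathbb{U}^*)\mathbb{U}$ via self-adjointness of $\Delta_N^{-1}$ and cyclicity of the trace, and the identification of \eqref{eq:Scont} with the Lie--Poisson form $\dot S+\ad^*_{\delta_S H_{\mathbb{U}}}S=0$ using $\ad^*_Q S=-[Q,S]_N$ as in \Cref{sec:zeitlin}. One minor remark: in Step~(ii) the identity $\mathbb{U}^*\mathbb{U}=I_r$ is not actually needed to pass from $\inprodN{\mathbb{U}\,\delta S\,\mathbb{U}^*}{P(\mathbb{U}S\mathbb{U}^*)}$ to $\inprodN{\delta S}{\mathbb{U}^*P(\mathbb{U}S\mathbb{U}^*)\mathbb{U}}$---cyclicity of the trace suffices---though it is of course what guarantees $P_{\mathbb{U}}(S)\in\fru(r)$ and that the embedding $S\mapsto\mathbb{U}S\mathbb{U}^*$ is a Frobenius isometry.
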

Moreover, the evolution of $U$ in \eqref{eq:Ucont} remains on the Stiefel manifold and the Hamiltonian is a conserved quantity whenever $S$ is fixed, as shown in the following result.
\begin{proposition}\label{prop:orthU}
Let $U(t)$ be solution of \eqref{eq:Ucont} in $\Ical$ with initial condition $U_0\in\St(r,\mathbb{C}^{N})$. Then, $U(t)$ belongs to $\St(r,\mathbb{C}^{N})$ for any $t\in\Ical$. Moreover, if $Y=U\mathbb{S}U^*$ with $\mathbb{S}\in\fru(r)$ fixed, the Hamiltonian $H(Y)$ defined in \eqref{eq:ham} is conserved.
\end{proposition}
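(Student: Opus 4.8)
The plan is to prove the two assertions separately: first that $U(t)\in\St(r,\mathbb{C}^N)$ for all $t$, and then that $H(Y)$ is conserved when $\mathbb{S}$ is held fixed. For the first part, the natural strategy is to show that $\dfrac{d}{dt}(U^*U)=0$. Differentiating the product and substituting $\dot U = -\hbar_N^{-1}(I-UU^*)P(USU^*)U$ from \eqref{eq:Ucont}, one gets
\begin{equation*}
\dfrac{d}{dt}(U^*U)=\dot U^* U + U^* \dot U = -\hbar_N^{-1}\Big( U^*P(USU^*)^*(I-UU^*)U + U^*(I-UU^*)P(USU^*)U\Big).
\end{equation*}
The key observation is that $(I-UU^*)U = U - U(U^*U)$, which, as long as $U^*U=I$ (which holds at $t=t_0$), equals $0$; hence the right-hand side vanishes identically along the flow, so $U^*U\equiv I$ is invariant. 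To make this rigorous one should set $G(t):=U^*U-I$, note $G(t_0)=0$, and observe that $\dot G$ is a (time-dependent, locally Lipschitz) linear function of $G$ that vanishes when $G=0$; by uniqueness for the ODE, $G\equiv 0$. This is the cleanest route and avoids any manifold-language technicalities. The mild obstacle here is just bookkeeping with the $(I-UU^*)$ projector and making sure the cancellation is phrased as a Gr\"onwall/uniqueness argument rather than a naive ``it's zero'' claim (since $(I-UU^*)U=0$ only holds once orthonormality is known).

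For the second part, with $\mathbb{S}\in\fru(r)$ fixed write $Y(t)=U(t)\mathbb{S}U^*(t)$ and compute
\begin{equation*}
\dot Y = \dot U \mathbb{S} U^* + U\mathbb{S}\dot U^*.
\end{equation*}
Using $\delta_{\omega}H(Y)=-P(Y)$ and the chain rule, $\dfrac{d}{dt}H(Y)=\inprodN{\delta H(Y)}{\dot Y} = -\inprodN{P(Y)}{\dot Y}$. Substituting $\dot U$ from \eqref{eq:Ucont} and its adjoint, and using that $P(Y)$ is Hermitian (or skew-Hermitian in $\fru(N)$, up to the $\ic$ convention) together with cyclicity of the trace, the two contributions combine into something proportional to $\Tr\big(P(Y)(I-UU^*)P(Y)\,U\mathbb{S}U^*\big)$ plus its conjugate; the factor $(I-UU^*)$ sitting between the two copies of $P(Y)$ in the symmetric combination, together with $P(Y)U\mathbb{S}U^*$ having range in $\Rcal(U)$ and being annihilated by $U^*(I-UU^*)=0$, forces the whole expression to vanish. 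More structurally, one can appeal to \eqref{eq:decomp}: the flow \eqref{eq:Ucont} moves $Y$ only in the ``$\Pi_U$'' part of the tangent space, i.e.\ $\dot Y = \Pi_U(X_H(Y))$ in the notation there, which is the off-diagonal block $\Pi_{\Rcal(U)}X_H(Y)\Pi_{\Rcal^\perp(U)}+\text{h.c.}$, while $\nabla H(Y)=-P(Y)$ has the property that $\inprodN{\nabla H(Y)}{\Pi_U(X_H(Y))}$ pairs a block-diagonal-acting object with an off-diagonal one and hence vanishes.

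I expect the main obstacle to be the second part: making the algebraic cancellation in $\dfrac{d}{dt}H(Y)$ fully transparent, in particular keeping straight the roles of $P(Y)$ as a multiplier versus as a gradient, the Hermitian-vs-skew-Hermitian conventions (the $\ic$ in $\fru(N)$), and the scaling constants $\hbar_N$, $4\pi/N$. The cleanest presentation is probably to first record the identity $\dot Y = \Pi_U(X_H(Y))$ (an immediate consequence of \eqref{eq:Ucont}, \eqref{eq:factSt} with $\dot S=0$, and \eqref{eq:decomp}), and then observe that $\inprodN{P(Y)}{A}=0$ for any $A$ of the form $\Pi_{\Rcal(U)}B\Pi_{\Rcal^\perp(U)}+\Pi_{\Rcal^\perp(U)}B^*\Pi_{\Rcal(U)}$ with $B$ such that... — actually the quick finish is just: $\Tr\!\big(P(Y)\,\Pi_{\Rcal(U)}X_H(Y)\Pi_{\Rcal^\perp(U)}\big)=\Tr\!\big(\Pi_{\Rcal^\perp(U)}P(Y)\Pi_{\Rcal(U)}X_H(Y)\big)$, which need not vanish by itself, so one really does need to add the two adjoint blocks and use that $P(Y)$ commutes with neither projector but that the \emph{sum} pairs to zero because $\inprodN{\cdot}{\cdot}$ of a self-adjoint-type object with the two cross terms telescopes. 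I would therefore fall back on the direct trace computation with explicit substitution of \eqref{eq:Ucont} as the safe, verifiable route, and only mention the \eqref{eq:decomp} viewpoint as the conceptual explanation.
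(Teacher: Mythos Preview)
Your plan for the first assertion is correct and matches the paper's one-line argument: differentiate $U^*U$, use \eqref{eq:Ucont}, and observe that $(I-UU^*)U=0$ as long as $U^*U=I$. The Gr\"onwall/uniqueness packaging you propose is a perfectly acceptable way to make this rigorous.

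For the second assertion your \emph{fallback plan} (direct trace computation after substituting \eqref{eq:Ucont}) is correct and would succeed, but the cancellation mechanism you describe is wrong. The claim that ``$P(Y)U\mathbb{S}U^*$ has range in $\Rcal(U)$'' is false in general, since $P(Y)$ need not preserve $\Rcal(U)$; so the argument via $U^*(I-UU^*)=0$ does not close. What actually makes the direct computation terminate is plain cyclicity of the trace: writing $Q=I-UU^*$ and $P=P(Y)$, one finds $-\inprodN{P}{\dot Y}$ proportional to $\Tr(PQPY)-\Tr(PYPQ)$, and cycling $P$ in the second term gives $\Tr(PQPY)$, so the difference vanishes. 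The paper reaches the same conclusion more transparently by first rewriting
\[
\dot Y \;=\; -[P(Y),Y]_N \;+\; U\,[\,U^*P(Y)U,\ \mathbb{S}\,]_N\,U^*,
\]
after which $d_tH(Y)=-\inprodN{P(Y)}{\dot Y}$ is a sum of two terms, each of the form $\inprodN{A}{[A,B]_N}$ (with $A=P(Y)$, resp.\ $A=U^*P(Y)U$), hence zero. This is also exactly the $\Pi_U/\Pi_S$ decomposition you gesture at: the second commutator is $-\Pi_S(X_H(Y))$, and since $\inprodN{P(Y)}{X_H(Y)}=0$ always, the $\Pi_U$ and $\Pi_S$ parts each pair to zero with $P(Y)$. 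I would adopt this commutator rewriting rather than the block-projection heuristics.
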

\begin{proof}
To show that the solution of \eqref{eq:Ucont} remains on the Stiefel manifold $\St(r,\mathbb{C}^{N})$ one can simply verify that
$d_t(U^*(t)U(t))=0$ for all $t$.
Since $U_0\in\St(r,\mathbb{C}^{N})$ by assumption, the conclusion follows.

To show the conservation of the Hamiltonian, we first write the evolution equation for $Y=U\mathbb{S}U^*$ based on \eqref{eq:Ucont}; thereby
$$\dot{Y}=\dot{U}\mathbb{S}U^*+U\mathbb{S}\dot{U}^*
=-[P(Y),Y]_N+U[U^*P(Y)U,\mathbb{S}]_N U^*.$$
Using this expression for $\dot{Y}$ 
%and the self-adjointness of the inverse Laplacian 
yields
\begin{equation*}
\begin{aligned}
    d_t H(Y)
    & = \inprodN{P(Y)}{[P(Y),Y]_N}- \inprodN{P(Y)}{U[U^*P(Y)U,\mathbb{S}]_NU^*}\\
    & = -\inprodN{U^*P(Y)U}{[U^*P(Y)U,\mathbb{S}]_N}=0.
    \end{aligned}
    \end{equation*}
\end{proof}
Note that results analogous to \Cref{prop:LPIsoROMS,prop:orthU} hold if we introduce in \eqref{eq:Scont}-\eqref{eq:Ucont} the approximation of the stream function $P$ from \Cref{sec:appot} with Hamiltonian given by 
$\ap{H}_{\mathbb{U}}(S):=\ap{H}(\mathbb{U}S\mathbb{U}^*)$
where $\ap{H}$ is defined in \eqref{eq:apHam}.

\begin{corollary}
Let $t\in\Ical$ be fixed. Assume $Y(t)=U(t)S(t)U^*(t)$ is obtained from $S(t)$, solution of \eqref{eq:Scont} and $U(t)$, solution of \eqref{eq:Ucont}. Then, $Y(t)$ belongs to $\fru(N)$ for all $t\in\Ical$. Moreover, the Casimir functions \eqref{eq:meqtCas} satisfy
$C_k(Y(t))=C_k(S(t))$, for any $k\geq 1$, and are, thus, conserved quantities.
\end{corollary}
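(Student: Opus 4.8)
The plan is to assemble the corollary from the three facts already established: that \eqref{eq:Ucont} preserves the Stiefel manifold, that \eqref{eq:Scont} preserves skew-Hermitianness of $S$, and the algebraic identity $\Tr(Y^k) = \Tr(S^k)$ valid whenever $U \in \St(r,\mathbb{C}^N)$. I would first observe that, by \Cref{prop:orthU}, the solution $U(t)$ of \eqref{eq:Ucont} remains in $\St(r,\mathbb{C}^N)$ for all $t \in \Ical$, so that $U^*(t)U(t) = I_r$. Next I would check that $S(t)$, the solution of \eqref{eq:Scont}, stays skew-Hermitian: since $\dot S = -[U^*P(USU^*)U, S]_N$ and $U^*P(USU^*)U$ is skew-Hermitian (because $P$ maps into skew-Hermitian matrices and $S\mapsto U^*(\cdot)U$ preserves this), the commutator bracket of two skew-Hermitian matrices is again skew-Hermitian; alternatively one verifies $d_t(S + S^*) = 0$ directly, and $S_0 + S_0^* = 0$ by construction. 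Combining these two facts with $Y(t) = U(t)S(t)U^*(t)$ gives $Y^*(t) = U(t)S^*(t)U^*(t) = -U(t)S(t)U^*(t) = -Y(t)$, so $Y(t) \in \fru(N)$.

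For the Casimir identity, I would use $U^*U = I_r$ to telescope powers: $Y^k = (USU^*)^k = U S (U^*U) S (U^*U)\cdots S U^* = U S^k U^*$, and hence, by cyclicity of the trace and $U^*U = I_r$ again,
\begin{equation*}
    C_k(Y(t)) = \frac{4\pi}{N}\Tr(Y^k(t)) = \frac{4\pi}{N}\Tr(U(t)S^k(t)U^*(t)) = \frac{4\pi}{N}\Tr(S^k(t)) = C_k(S(t)).
\end{equation*}
Finally, to conclude that these quantities are conserved, I would invoke \Cref{prop:LPIsoROMS}: with $U$ fixed equal to $U(t)$ at a given instant — or, more carefully, by the isospectrality asserted there — the flow \eqref{eq:Scont} is isospectral on $\fru(r)$, so the eigenvalues of $S(t)$, and therefore $\Tr(S^k(t))$, are independent of $t$. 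Hence $C_k(Y(t)) = C_k(S(t)) = C_k(S_0)$ for all $t$.

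I expect the only genuine subtlety to be the last step: \Cref{prop:LPIsoROMS} asserts isospectrality of \eqref{eq:Scont} under the hypothesis that $U$ is \emph{frozen}, whereas in the corollary $U$ evolves according to \eqref{eq:Ucont}. The resolution is that the spectrum of $S$ is governed purely by \eqref{eq:Scont}, which at each instant has the form $\dot S = -[\Omega(t), S]_N$ for the skew-Hermitian matrix $\Omega(t) := U^*(t)P(U(t)S(t)U^*(t))U(t)$; any flow of this Lax-pair type is isospectral regardless of how $\Omega$ depends on time, since $d_t\Tr(S^k) = k\Tr(S^{k-1}\dot S) = -k\hbar_N^{-1}\Tr(S^{k-1}[\Omega,S]) = 0$ by cyclicity. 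So the cleanest route is to prove $d_t\Tr(S^k(t)) = 0$ directly from \eqref{eq:Scont} rather than quoting \Cref{prop:LPIsoROMS} verbatim. With that in hand, conservation of $C_k(Y(t))$ follows immediately from the identity $C_k(Y) = C_k(S)$ established above.
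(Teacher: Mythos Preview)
Your proposal is correct and matches the paper's intended argument. The paper states this result as a corollary without proof, relying on \Cref{prop:orthU}, \Cref{prop:LPIsoROMS}, and the observation made earlier in \Cref{sec:splitting} that $\Tr(Y^k)=\Tr(S^k)$ whenever $U\in\St(r,\mathbb{C}^N)$; you have simply fleshed out these ingredients. Your remark about the frozen-versus-evolving $U$ subtlety, and its resolution via the direct Lax-pair computation $d_t\Tr(S^k)=0$, is a worthwhile clarification that the paper leaves implicit.
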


\subsection{Time integration of the approximate dynamics via splitting}
Let $\Phi_U$ and $\Phi_S$ denote the flux associated with the projection operator $\Pi_U$ and $\Pi_S$ from \eqref{eq:decomp}, respectively, so that $\Phi_U(t_{\tind},t_{\tind+1},Y(t_{\tind}))$ is solution of $\dot{Y} = \Pi_U(\dot{Y})$ in $\Ical_{\tind}=(t_{\tind},t_{\tind+1}]$, with $\tind\geq 0$ and similarly for $\Phi_S$.
Problem \eqref{eq:Scont}-\eqref{eq:Ucont} can be solved using a splitting scheme. We focus on a second order consistent splitting, such as the Strang splitting, since the lowest order isospectral Lie--Poisson integrator of \cite{MV20} has order $2$.

The Strang splitting integrator in the time interval $\Ical_{\tind}$ reads
\begin{equation*}%\label{eq:Strang}
Y_{\tind+1}=\Phi_U\Big(t_{\tind+1/2},t_{\tind+1},\Phi_S\big(t_{\tind},t_{\tind+1},\Phi_U(t_{\tind},t_{\tind+1/2},Y_{\tind})\big)\Big).
\end{equation*}
Given $Y(t_{\tind})=U(t_{\tind})S(t_{\tind}) U^*(t_{\tind})$, perform the following steps in $\Ical_{\tind}$.
\begin{itemize}
    \item Let $\mathbb{S} = S(t_{\tind})$. Starting from the initial condition $U(t_{\tind})$, compute $U(t_{\tind+1/2})$ by solving the $N\times r$ problem
    \begin{equation}\label{eq:U}
        \dot{U}(t) +\hbar_N^{-1} (I-U(t)U^*(t))P(U(t)\mathbb{S}U^*(t))U(t)\qquad \mbox{for}\;t\in(t_{\tind},t_{\tind+1/2}].
    \end{equation}
    \item Let $\mathbb{U} = U(t_{\tind+1/2})$.
    Starting from the initial condition $S(t_{\tind})$, 
    compute $S(t_{\tind+1})$ by solving the $r\times r$ problem 
    \begin{equation}\label{eq:S}
        \dot{S}(t) 
        = -[\mathbb{U}^*P(\mathbb{U}S(t)\mathbb{U}^*) \mathbb{U},S(t)]_N\qquad \mbox{for}\;t\in(t_{\tind},t_{\tind+1}].
    \end{equation}
    \item Starting from the initial condition $U(t_{\tind+1/2})$, compute $U(t_{\tind+1})$ by solving 
    problem \eqref{eq:U} in $(t_{\tind+1/2},t_{\tind+1}]$,
    with $\mathbb{S} = S(t_{\tind+1})$.
\end{itemize}
Then, return $Y(t_{\tind+1})=U(t_{\tind+1})S(t_{\tind+1})U^*(t_{\tind+1})$. 

The Hamiltonian is conserved by the splitting owing to \Cref{prop:LPIsoROMS,prop:orthU}.
Taking $\mathbb{U}=U(t_{\tind+1/2})$,
$\mathbb{S}_{\tind}=S(t_{\tind})$, and
$\mathbb{S}_{\tind+1}=S(t_{\tind+1})$ results in
\begin{equation*}
    \begin{aligned}
        H(Y(t_{\tind+1}))&
        =H(U(t_{\tind+1/2})\mathbb{S}_{\tind+1} U^*(t_{\tind+1/2}))
=H_{\mathbb{U}}(S(t_{\tind+1}))=H_{\mathbb{U}}(S(t_{\tind}))\\
& = H(U(t_{\tind+1/2})\mathbb{S}_{\tind}U^*(t_{\tind+1/2}))
=H(U(t_{\tind})\mathbb{S}_{\tind}U^*(t_{\tind}))
=H(Y(t_{\tind})).
    \end{aligned}
\end{equation*}
Similarly, the Casimir functions are preserved since the factor $S$ satisfies an isospectral flow and the factor $U$ is in the Stiefel manifold.
\begin{remark}
In principle one could solve problem \eqref{eq:Scont}-\eqref{eq:Ucont} via the DLRA splitting integrator introduced in \cite{LO14} or one of its extensions.
However, the conservation of the Casimirs is not guaranteed: for one, integrating the evolution equation \eqref{eq:Scont} for $S$ from the initial condition $S(t_{\tind})=(U^*(t_{\tind+1})U(t_{\tind})) S(t_{\tind})(U^*(t_{\tind+1})U(t_{\tind}))^*$ prevents the flow of $S$ from being isospectral. Moreover, the Hamiltonian is, in general, no longer globally conserved.
\end{remark}

The evolution equation for the low-dimensional factor $S$ in \eqref{eq:Scont} can be solved using the isospectral and Lie--Poisson integrator proposed in \cite{MV20}.
The time integrator is as in \eqref{eq:Iso2NLit}-\eqref{eq:Iso2upd} with the stream function $P$ replaced by $U^*PU$.
The evolution equation for the factor $U$ can be solved using one of the time integrators of \Cref{sec:RKMK} and \Cref{sec:midU}.
By construction, these numerical time integrators combined with the splitting ensure that $S_{\tind}\in\fru(r)$ and $U_{\tind}\in\St(r,\mathbb{C}^N)$ for any $\tind\geq 0$, which gives
$Y_{\tind}=U_{\tind}S_{\tind}U^*_{\tind}\in\fru(N)$ for any $\tind\geq 0$.
Moreover, the Casimir functions are preserved by the temporal discretization.

\subsection{Computational complexity of solving the approximate dynamics}
The computational complexity of solving the evolution equation for $S$, in the temporal subinterval $\Ical_{\tind}$, with the isospectral scheme of \Cref{sec:isoInt} is $O(N^2 r n_{\iter}^{\tind})+O(r^3 n_{\iter}^{\tind})$ where $n_{\iter}^{\tind}$ is the
number of iterations required by the nonlinear step \eqref{eq:Iso2NLit}.
This can be easily verified by reproducing the steps of the proof of \Cref{prop:IsoCost}.
The evolution of the factor $U$ in \eqref{eq:Ucont} solved with a RK-MK time integrator has the arithmetic complexity proven in \Cref{prop:RKMKCost}, namely
$O(N r^2 N_s^2) + O(r^3 N_s^4) + O(N^2 r N_s)$.
The proof is analogous but with $\Lcal_{S}(U):=\Fcal_{S}(U)U^*-U\Fcal_{S}^*(U)$ and $\Fcal_{S}(U)=-\hbar_N^{-1}(I-UU^*)P(USU^*)U$.
This means that, contrary to the solver for the original model, see \Cref{prop:IsoCost}, the proposed low-rank splitting scales quadratically with $N$.

When the stream function $P$ is approximated as in \Cref{sec:appot}, then the computational complexity of the algorithm in the time interval $\Ical_{\tind}$ reduces to
$$O(N\ap{N} r^2 n_{\iter}^{\tind})+O(r^3 n_{\iter}^{\tind})+
O(N r^2 N_s^2) + O(r^3 N_s^4) + O(N\ap{N} r N_s).$$

%%%%%%%%%%%%%%%%%%%%%%%%%%%%%%%%%%%%%%%%%%%%%
\section{Numerical experiments}\label{sec:numexp}
We test the performances of the proposed methods on two test cases.
In the first one we consider as initial condition a random skew-Hermitian matrix with a prescribed spectrum. In the second test case we consider a more physical simulation of vortex blob dynamics.

Concerning the notation, if not otherwise specified, the symbol $\Wref$ will denote a reference solution, $\svd{\Wref}$ the truncated SVD of $\Wref$, and $\Omega$ a generic numerical solution.
In the legend we will use the shorthand ``Zeitlin'' to refer to the Zeitlin model \eqref{eq:meqt} with initial condition $W_0\in\mathfrak{su}(N)$, ``Rec($r$)'' to refer to the reconstruction equation \eqref{eq:US0} associated with the initial condition $Y_0=\svd{W_0}$ of rank $r$, and ``TRec($r$,$\ap{N}$)'' to refer to the reconstruction equation with truncation \eqref{eq:apUS0} associated with the rank-$r$ initial condition $Z_0=Y_0=\svd{W_0}$ and truncation to the $\ap{N}$th mode.

We compare different time integrators:
$\Iso$ refers to the second order isospectral Lie--Poisson integrator of \cite{MV20} and summarized in \Cref{sec:isoInt};
RKMK-p, with p $\in\{1,2\}$, refers to the $p$th order Runge--Kutta Munthe-Kaas time integrator described in \Cref{sec:RKMK} for the solution of the evolution equation \eqref{eq:US0} (or \eqref{eq:U}) for the factor $U$ in the low-rank factorization \eqref{eq:fact} (or \eqref{eq:factSt}).

\subsection{Random initial condition with prescribed spectrum}
Given $N$, we consider as initial condition a matrix $W_0\in\fru(N)$ generated randomly from a standard normal distribution but with prescribed spectrum: the eigenvalues are taken such that the absolute value of their imaginary part ranges uniformly in the interval $[10^{-11},10]$.
%as given in \Cref{fig:eigW0} (for the case $N=500$).
We assess the performances of the different algorithms by comparison with a reference solution $\Wref$ obtained by solving the Zeitlin model \eqref{eq:meqt} with initial condition $W_0$, $N$ degrees of freedom, and the second order isospectral solver $\Iso$ with a fine time step $\dt$ that will be specified case by case. We compare the different algorithms on the temporal interval $[0,T=1]$.

\subsubsection{Full-rank approximation $r=N$, different time steps $\dt$}
As a first test we set $N=100$ and compute the reference solution $\Wref$ using $\Iso$ with $\dt=10^{-6}$.
We take the approximation rank $r$ equal to the problem dimension $N$ and let $\dt$ vary. This allows us to assess the performances of the numerical time integration schemes and of the factorization \eqref{eq:fact} without considering the error introduced by a low-rank approximation.
In \Cref{fig:errvsdtcpu} we show the error in the Frobenius norm between the reference solution $\Wref(T)$ at final time $T=1$ and an approximation $\Omega(T)$. We compare the cases where $\Omega$ is obtained by solving the Zeitlin model \eqref{eq:meqt} with the $\Iso$ scheme and the full-rank reconstruction equation \eqref{eq:US0} with the RK-MK time integrator of order one and two.
In \Cref{fig:errvsdtcpu} (left) we observe that, as expected, all numerical solutions converge to the reference one with the order of the corresponding scheme. We also record that, at least in this test case, the error obtained with the reconstruction equation (red line with stars) is lower than the one obtained with the Zeitlin model and $\Iso$ solver (blue line with dots).
%We also study the runtime of each algorithm taking into account the accuracy of the approximation.
In \Cref{fig:errvsdtcpu} (right) we report the error between the reference and the approximate solutions versus the algorithm runtime (in seconds). Each datum refers to a different value of $\dt$.
%with $\dt\in\{5\rm{e-}4,1\rm{e-}3,5\rm{e-}3,1\rm{e-}2,5\rm{e-}2,1\rm{e-}1\}$.
We observe that the second order RK-MK solver (red line with stars) is computationally more efficient than the $\Iso$ solver (blue line with dots), for a given time step $\dt$. As an example an error of the order of $10^{-9}$ is achieved with the second order approximate solver 8 times faster than with the $\Iso$ solver for the Zeitlin model.

\begin{figure}[H]
\centering
\begin{tikzpicture}
    \begin{groupplot}[
      group style={group size=2 by 1,
                  horizontal sep=.5cm},
      width=6.5cm, height=5.25cm
    ]
    \nextgroupplot[xlabel={$\dt$},
                 ylabel={$\norm{\Wref(T)-\Omega(T)}$},
                 axis line style = thick,
                 grid=both,
                 minor tick num=2,
                 grid style = {gray,opacity=0.2},
                 xmode=log, ymode=log,
                 xmin = 5e-4, xmax = 1e-1,
                 ymax = 1e-4, ymin = 1e-12,
                 every axis plot/.append style={thick},
                 legend cell align=left,
                 ylabel near ticks,
                 xlabel style={font=\footnotesize},
                 ylabel style={font=\footnotesize},
                 x tick label style={font=\footnotesize},
                 y tick label style={font=\footnotesize},
                 legend style={font=\scriptsize},
                 legend columns=5, 
                 legend style={at={(.95,1.2)},anchor=north}]
        \addplot+[mark=diamond,color=cyan] table[x=dt,y=errRefYf] {data/errvsdt_N100.txt}; 
        \addplot+[mark=star,color=red] table[x=dt,y=errRefY] {data/errvsdt_N100.txt};
        \addplot+[mark=*,color=blue] table[x=dt,y=errRefW] {data/errvsdt_N100.txt};
        \addplot+[mark=none,color=black, dashed] table[x=dt,y=h] {data/errvsdt_N100.txt};
        \addplot+[mark=none,color=black, dotted] table[x=dt,y=htwo] {data/errvsdt_N100.txt};
        \legend{Rec($r{=}N$) RKMK-1$\quad$,
                Rec($r{=}N$) RKMK-2$\quad$,
                Zeitlin $\Iso\quad$,
                $\dt\quad$,
                $\dt^2$};
        \nextgroupplot[xlabel={Runtime $[s]$},
                 axis line style = thick,
                 grid=both,
                 minor tick num=2,
                 grid style = {gray,opacity=0.2},
                 ymode=log,
                 ymin = 1e-12, ymax = 1e-4,
                 xmin = 0, xmax = 30,
                 every axis plot/.append style={thick},
                 %ylabel near ticks,
                 xlabel style={font=\footnotesize},
                 x tick label style={font=\footnotesize},
                 yticklabel=\empty]
        \addplot+[mark=diamond,color=cyan] table[x=cpuYf,y=errRefYf] {data/errvscpu_N100.txt}; 
        \addplot+[mark=star,color=red] table[x=cpuY,y=errRefY] {data/errvscpu_N100.txt};
        \addplot+[mark=*, color=blue,
        every node near coord/.append style={xshift=1cm},
            every node near coord/.append style={yshift=-0cm},
            nodes near coords, 
            point meta=explicit symbolic,
            every node near coord/.append style={font=\footnotesize}]
            table[x=cpuW,y=errRefW, meta index = 7] {data/errvscpu_N100.txt};
    \end{groupplot}
\end{tikzpicture}
\vspace{-1em}
\caption{Error, at the final time $T$, between the reference solution and different approximate solutions vs. the time step $\dt$ (left) and vs. the algorithm runtime (right). On the right plot each datum refers to a different value of $\dt$.}
\label{fig:errvsdtcpu}
\end{figure}

\subsubsection{Fixed time step $\dt$, different approximation ranks $r$}
In this test case we fix $N=500$ and compute the reference solution $\Wref$ using the second order isospectral integrator $\Iso$ with $\dt=10^{-5}$. We want to compare the approximate solution also with the best low-rank approximation $\svd{\Wref}$ given by the truncated SVD of $\Wref$ at each time.

As time integrator for the evolution equation \eqref{eq:US0} of the factor $U$ in \eqref{eq:fact} we consider the second order RK-MK scheme with $\dt=10^{-2}$.

In \Cref{fig:ROMerrvsr} we report the error, in the Frobenius norm and at the final time, $\normF{Y(T)-\Wref(T)}$ between the rank-$r$ solution and the reference solution and the error $\normF{Y(T)-\svd{\Wref(T)}}$ between the rank-$r$ solution and the best low-rank approximation of the reference solution vs. the approximation rank $r$.
We can infer that the distance between the approximate trajectory and the best rank-$r$ approximation is lower than the approximation error due to the low-rank truncation. 
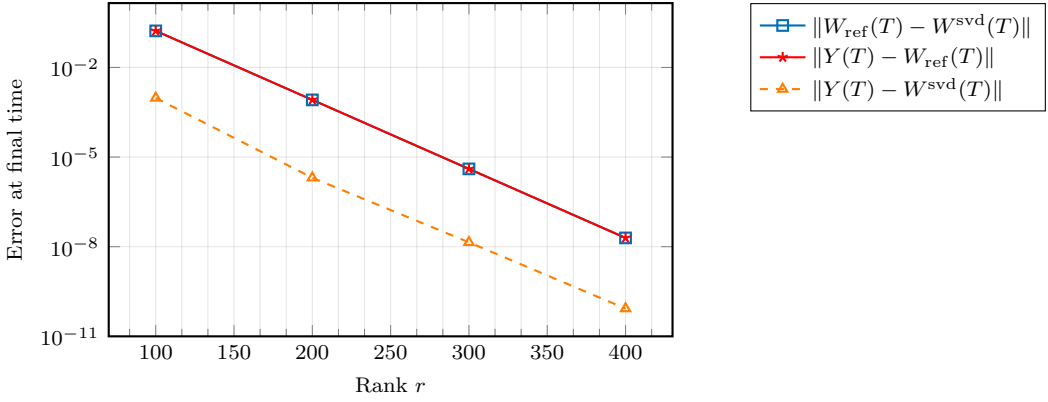
\begin{figure}[H]
\centering
\begin{tikzpicture}
    \begin{axis}[width = 7.8cm, height = 5.3cm,
                 xlabel={Rank $r$},
                 ylabel={Error at final time},
                 axis line style = thick,
                 grid=both,
                 minor tick num=2,
                 grid style = {gray,opacity=0.2},
                 ymode=log,
                 %xmin = 5e-4, xmax = 1e-1,
                 %ymax = 1e-4, ymin = 1e-12,
                 every axis plot/.append style={thick},
                 legend cell align=left,
                 ylabel near ticks,
                 xlabel style={font=\footnotesize},
                 ylabel style={font=\footnotesize},
                 x tick label style={font=\footnotesize},
                 y tick label style={font=\footnotesize},
                 legend style={font=\scriptsize},
                 legend style={at={(1.4,1)},anchor=north},
                 mark options={solid}]
        \addplot+[mark=square,color=NavyBlue] table[x=ROMsize,y=errRefWsvd] {data/errvsROMsize_N500.txt};
        \addplot+[mark=star,color=red] table[x=ROMsize,y=errRefY] {data/errvsROMsize_N500.txt}; 
        \addplot+[mark=triangle,color=orange,dashed] table[x=ROMsize,y=errWsvdY] {data/errvsROMsize_N500.txt};
        %\addplot+[mark=diamond,color=black] table[x=ROMsize,y=errWY] {data/errvsROMsize_N500.txt};
        \legend{$\normF{\Wref(T)-\svd{\Wref(T)}}$,
                $\normF{Y(T)-\Wref(T)}$,
                $\normF{Y(T)-\svd{\Wref(T)}}$};
%                $\normF{W(T)-Y(T)}$};
    \end{axis}
\end{tikzpicture}
\caption{Approximation errors at the final time vs. the approximation rank $r$.}
\label{fig:ROMerrvsr}
\end{figure}

To assess the performances of the low-rank approximation in terms of computational efficiency, we report in \Cref{fig:ROMerrvsCPU} the error between the reference solution and the approximate solution versus the algorithm runtime. The blue line with dots refers to the solution of the Zeitlin model \eqref{eq:meqt} using the $\Iso$ scheme,
while the red line with crosses refers to the low-rank approximation \eqref{eq:fact}.
We observe that, although the computational cost of solving the low-rank model increases with its size $r$, as expected, it is always lower than the cost required to solve the Zeitlin model.
In particular, from the left plot we can infer that the rank-$r$ approximation can achieve a more accurate solution at a lower computational cost. %; see for example the case $r=N=500$.

\begin{figure}[H]
\centering
\begin{tikzpicture}
    \begin{groupplot}[
      group style={group size=2 by 1,
                  horizontal sep=1.2cm},
      width=6.5cm, height=5.2cm
    ]
    \nextgroupplot[ylabel={$\normF{\Wref(T)-\Omega(T)}$},
                  xlabel={Runtime [s]},
                  axis line style = thick,
                  grid=both,
                  minor tick num=3,
                  grid style = {gray,opacity=0.2},
                  xmin = 14, xmax = 34,
                  xtick = {14,18,22,26,30,34},
                  ytick={1e-11,1e-9,1e-7,1e-5,1e-3,1e-1},
                  ymode=log,
                  xlabel style={font=\footnotesize},
                  ylabel style={font=\footnotesize, inner sep=-5pt},
                  x tick label style={font=\footnotesize},
                  y tick label style={font=\footnotesize},
                  legend style={font=\scriptsize},
                  legend columns = 1,
                  legend style={at={(.75,.95)},anchor=north}]
        \addplot+[mark=*,color=blue] table[x=timeFOM,y=errRefW] {data/errvsCPUandROMsize_N500.txt};
        \addplot+[mark=star,color=red,
        every node near coord/.append style={xshift=0.65cm},
            every node near coord/.append style={yshift=-0.2cm},
            nodes near coords, 
            point meta=explicit symbolic,
            every node near coord/.append style={font=\footnotesize}] 
        table[x=timeROM,y=errRefY, meta index=5] {data/errvsCPUandROMsize_N500.txt}; 
        \legend{Zeitlin $\Iso$, Rec($r$) RKMK-2};
        \nextgroupplot[ylabel={Runtime [s]},
                  xlabel={Rank $r$},
                  axis line style = thick,
                  grid=both,
                  minor tick num=0,
                  xmin = 100, xmax = 500,
                  grid style = {gray,opacity=0.2},
                  xlabel style={font=\footnotesize},
                  ylabel style={font=\footnotesize,inner sep=-2pt},
                  x tick label style={font=\footnotesize},
                  y tick label style={font=\footnotesize}]
        \addplot+[mark=*,color=blue] table[x=ROMsize,y=timeFOM] {data/errvsCPUandROMsize_N500.txt};
         \addplot+[mark=star,color=red] table[x=ROMsize,y=timeROM] {data/errvsCPUandROMsize_N500.txt};
    \end{groupplot}
\end{tikzpicture}
\caption{Left: error between the reference solution and the solution of the Zeitlin model and error between the reference solution and the approximate low-rank solution versus the algorithm runtime. Right: algorithm runtime vs. the approximation rank $r$.}\label{fig:ROMerrvsCPU}
\end{figure}
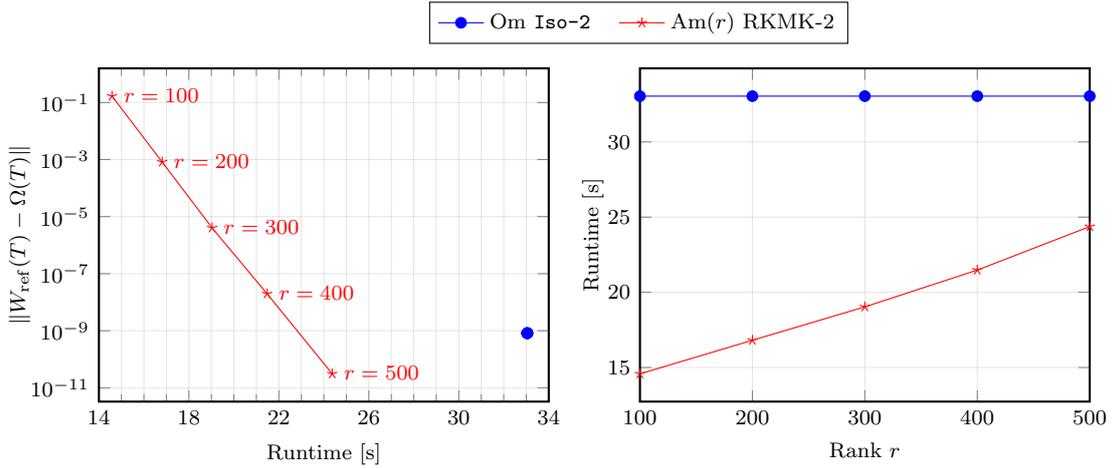

\Cref{fig:ROMerrCasHamvstime} (left) shows that
%that the Hamiltonian is preserved by the low-rank approximate trajectories to machine precision and independently of $r$ (right plot).
the error in the Hamiltonian evaluated at the low-rank approximate solution is only due to the quality of the low-rank approximation at the initial time (left plot) and it decreases as $r$ grows, as expected.
The Hamiltonian is preserved by the low-rank approximate trajectories to machine precision and independently of $r$ (not shown here).
To check the numerical conservation of the Casimir invariants we look at the error in the eigenvalues of the approximate solution.
In \Cref{fig:ROMerrCasHamvstime} (right) we report the $\ell^{\infty}$ error between the eigenvalues of the solution at final time and of the initial reference solution. Note that the error is constant in time since both temporal integrators are isospectral and it only depends on the neglected eigenvalues associated with the low-rank factorization.
As expected, the solution obtained with the $\Iso$ solver preserves the eigenvalues and, hence, the Casimirs to machine precision (blue lines with dots). Concerning the low-rank approximation, the error in the eigenvalues decreases as the rank $r$ increases.

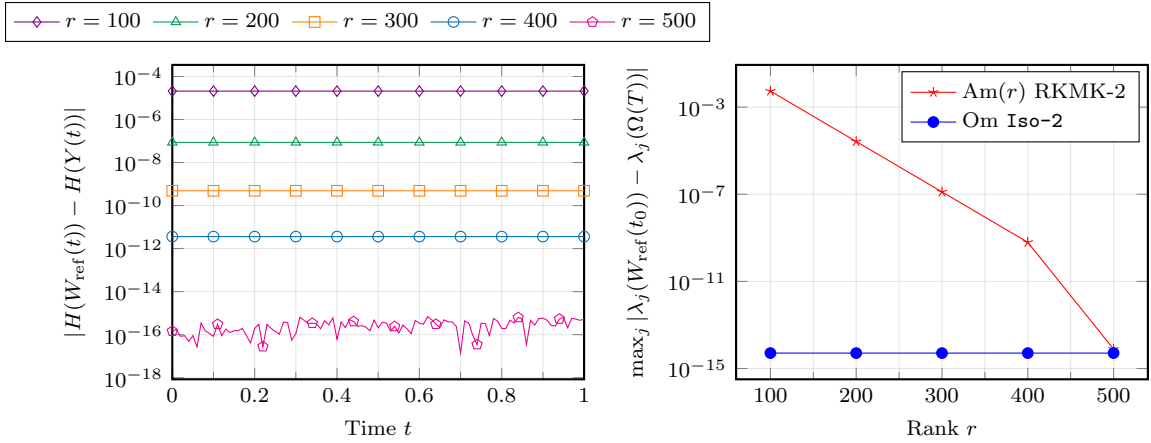
\begin{figure}[H]
\centering
\begin{tikzpicture}
    \begin{groupplot}[
      group style={group size=2 by 1,
                  horizontal sep=1.9cm},
      width=6.3cm, height=5.2cm
    ]
    \nextgroupplot[ylabel={$\frac{N}{4\pi}|H(\Wref(t))-H(Y(t))|$},
                  xlabel={Time $t$},
                  axis line style = thick,
                  grid=both,
                  minor tick num=1,
                  grid style = {gray,opacity=0.2},
                  xmin = 0, xmax = 1,
                  ymode=log,
                  ytick={1e-18,1e-16,1e-14,1e-12,1e-10,1e-8,1e-6,1e-4},
                  xlabel style={font=\footnotesize},
                  ylabel style={font=\footnotesize},
                  x tick label style={font=\footnotesize},
                  y tick label style={font=\footnotesize},
                  legend style={font=\scriptsize},
                  legend columns = 5,
                  legend style={at={(0.7,1.2)},anchor=north}]
        \addplot+[mark=diamond,color=violet,mark repeat=10] table[x=time,y=errHRefY100] {data/errHamvsROMsizeandTime_N500.txt};
        \addplot+[mark=triangle,color=ForestGreen,mark repeat=10] table[x=time,y=errHRefY200] {data/errHamvsROMsizeandTime_N500.txt};
        \addplot+[mark=square,color=orange,mark repeat=10] table[x=time,y=errHRefY300] {data/errHamvsROMsizeandTime_N500.txt};
        \addplot+[mark=o,color=NavyBlue,mark repeat=10] table[x=time,y=errHRefY400] {data/errHamvsROMsizeandTime_N500.txt};
        \addplot+[mark=pentagon,color=magenta,mark repeat=10] table[x=time,y=errHRefY500] {data/errHamvsROMsizeandTime_N500.txt};
        \legend{$r=100\;$,$r=200\;$,$r=300\;$,$r=400\;$,$r=500$};
        \nextgroupplot[ylabel={$\max_j |\lambda_j(\Wref(t_0))-\lambda_j(\Omega(T))|$},
                  xlabel={Rank $r$},
                  axis line style = thick,
                  grid=both,
                  minor tick num=0,
                  grid style = {gray,opacity=0.2},
                  minor tick num=1,
                  ymode = log,
                  xlabel style={font=\footnotesize},
                  ylabel style={font=\footnotesize},
                  x tick label style={font=\footnotesize},
                  y tick label style={font=\footnotesize},
                  legend style={font=\scriptsize},
                  legend cell align=left]
        \addplot+[mark=star,color=red] table[x=ROMsize,y=errEIGWrefY] {data/errEIGYWrefvsROMsize_N500.txt};
        \addplot+[mark=*,color=blue] table[x=ROMsize,y=errEIGWrefW] {data/errEIGYWrefvsROMsize_N500.txt};
        \legend{Rec($r$) RKMK-2, Zeitlin $\Iso$};
    \end{groupplot}
\end{tikzpicture}
\caption{Left: evolution of the error between the Hamiltonian evaluated at the reference solution and the Hamiltonian evaluated at the low-rank approximate solution. Different ranks $r$ are considered.
Right: Error between the eigenvalues of the rank-$r$ approximate solution at final time and the eigenvalues of the reference solution at initial time vs. the rank $r$ of the approximation.
}\label{fig:ROMerrCasHamvstime}
\end{figure}

%%%%%%%%%%%%%%%%%%%%%%%%%%%%%%%%%%%%%%%%%%%%%%%
\subsection{Vortex blob dynamics}

Point-vortex dynamics describes the evolution of solutions where the vorticity is characterized by a finite sum of Dirac distributions.
Point vortices are used to study geophysical turbulence.

In this test we consider an approximation of four point vortices via blobs distributed, at the initial time, according to
$R_j = {\mathrm{exp}}\big(100\sqrt{N}(\ic a_j (T^N_{1,1}-T^N_{1,-1})- b_j (T^N_{1,1}+T^N_{1,-1})+ \ic c_j T^N_{1,0})\big)$, for $j\in\{1,2,3,4\}$,
where $a_j,b_j,c_j$ are random real numbers and the matrices $T^N_{\ell,m}$ are defined according to \eqref{eq:lapl}.
%, $R_x=T_{1,1}-T_{1,-1}$, $R_y=i(T_{1,1}+T_{1,-1})$, and $R_z=T_{1,0}$.
The initial condition is then
$W_0=2\ic \sum_{j=1}^{4} R_j^{\top}B R_j$
where $B\in\mathbb{C}^{N\times N}$ has zero entries except for $B_{N,N}=1$.

% % rotations
% T11 = 1i*basis_Tlms{2, 3};
% T10 = 1i*basis_Tlms{1, 3};
% T1m1 = real(T11)+1i*imag(T11');
% Rx = T11 - T1m1;
% Ry = 1i*(T11 + T1m1);
% Rz = T10;

% W0 = zeros(NMAT);
% BLOB = zeros(NMAT);
% idx = NMAT;
% BLOB(idx,idx) = 2i;
% for j=1:Nb
%     R = expm(sqrt(NMAT)*100*(randn*Rx+randn*Ry+randn*Rz));
%     Rvec = R(idx,:);
%     M = BLOB(idx,idx)*(Rvec')*Rvec; % rank-1 matrix
%     W0 = W0 + M;
% end
% end

We set $N=500$.
The reference solution is obtained by solving the Zeitlin model \eqref{eq:meqt}
with the isospectral Lie-Poisson solver $\Iso$ of \cite{MV20} with $\dt=0.1$.
The reference solution at the initial and final times is reported in \Cref{fig:Wref}. The numerical final time is $T=63000$.
%The physical final time is $40$ (the numerical one is $T=63078$).

\begin{figure}[H]
\includegraphics[scale=0.57]{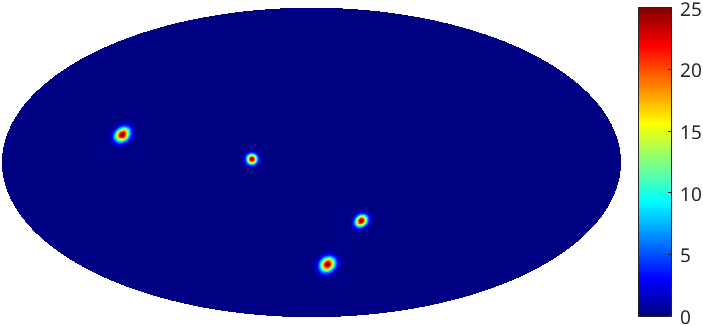}\hspace{-2.2em}
\includegraphics[scale=0.57]{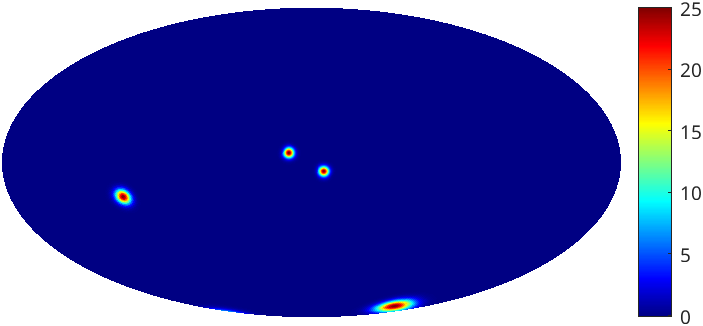}
\caption{Reference solution at the initial time (left) and at the final time (right).}
\label{fig:Wref}
\end{figure}

In this test we compare the performances of the Zeitlin model \eqref{eq:meqt} with initial condition $W_0$ solved with $\Iso$ and the low-rank approximation with rank $r=4$ which corresponds to the actual rank of the vorticity matrix for this test case.
In tests that, for the sake of brevity, we do not report here we observed that there is no improvement in accuracy in performing a low-rank approximation with rank $r>4$ but the computational cost is higher, as expected. %In such situations the accuracy does not improve but the computational cost is higher than in the case $r=4$.
Taking $r<4$, we observe, at every time, an error proportional to the best rank-$r$ approximation error at the initial time and, thus, proportional to the magnitude of the neglected singular values of $W_0$.
In view of these results we focus on the case $r=4$.

In \Cref{fig:blob_errvscpu} we compare the performances of the Zeitlin model
and of the rank-$4$ approximation obtained from \eqref{eq:US0} in terms of accuracy and computational cost.
We let the time step $\dt$ vary and compute the errors with respect to the reference solution at the final time $T$. We observe that, for a fixed $\dt$, the low-rank approximation yields a smaller error at a lower computational cost.

\begin{figure}[H]
\centering
\begin{tikzpicture}
    \begin{axis}[width = 8.5cm, height=5.5cm,
                ylabel={$\normF{\Wref(T)-\Omega(T)}$},
                  xlabel={Runtime [s]},
                  axis line style = thick,
                  grid=both,
                  minor tick num=3,
                  grid style = {gray,opacity=0.2},
                  xmax = 1e+5,
                  ymin = 1e-3, ymax = 10,
                  ymode=log,
                  xmode=log,
                  xlabel style={font=\footnotesize},
                  ylabel style={font=\footnotesize, inner sep=0pt},
                  x tick label style={font=\footnotesize},
                  y tick label style={font=\footnotesize},
                  legend style={font=\scriptsize},
                  legend columns = 1,
                  legend style={at={(0.25,0.25)},anchor=north},
                  legend cell align=left]
        %\addplot+[mark=star,color=red] table[x=cpuYMK,y=errWrefYMK] {data/blobs_errvsCPU_N500r4.txt};
        \addplot+[mark=star,color=red,
        every node near coord/.append style={xshift=0.65cm},
            every node near coord/.append style={yshift=-0.01cm},
            nodes near coords, 
            point meta=explicit symbolic,
            every node near coord/.append style={font=\footnotesize}] table[x=cpuYMKN,y=errWrefYMKN,meta index=10] {data/blobs_errvsCPU_N500r4.txt};
        %%\addplot+[mark=diamond,color=purple] table[x=cpuYT,y=errWrefYT] {data/blobs_errvsCPU_N500r4.txt};
        \addplot+[mark=*,color=blue,
        every node near coord/.append style={xshift=0.65cm},
            every node near coord/.append style={yshift=-0.01cm},
            nodes near coords, 
            point meta=explicit symbolic,
            every node near coord/.append style={font=\footnotesize}] 
        table[x=cpuW,y=errWrefW, meta index=10] {data/blobs_errvsCPU_N500r4.txt}; 
        \legend{Rec($4$) RKMK-2,Zeitlin $\Iso \quad$};
    \end{axis}
\end{tikzpicture}
\caption{Error, at the final time, between the reference solution and the rank-$4$ solution \eqref{eq:fact} obtained from \eqref{eq:US0} and error between the reference solution and the solution of the Zeitlin model \eqref{eq:meqt} versus the algorithm runtime. Different values of the time step $\Delta t$ are considered.}
\label{fig:blob_errvscpu}
\end{figure}

In a second set of numerical experiments we fix the time step $\dt=1$ and study the performances of the rank-$4$ approximation with and without truncation of the stream matrix; see \Cref{sec:appot}.

\begin{figure}[H]
\centering
\begin{tikzpicture}
    \begin{axis}[width = 8.5cm, height=5.7cm,
                ylabel={$\normF{\Wref(T)-\Omega(T)}$},
                  xlabel={Runtime [s]},
                  axis line style = thick,
                  grid=both,
                  minor tick num=3,
                  grid style = {gray,opacity=0.2},
                  ymin = 3e-3,
                  ymode=log,
                  xlabel style={font=\footnotesize},
                  ylabel style={font=\footnotesize, inner sep=-1pt},
                  x tick label style={font=\footnotesize},
                  y tick label style={font=\footnotesize},
                  legend style={font=\scriptsize},
                  legend columns = 1,
                  legend style={at={(0.65,0.95)},anchor=north},
                  legend cell align=left]
        \addplot+[mark=square*,color=ForestGreen,
             every node near coord/.append style={xshift=0.65cm},
             every node near coord/.append style={yshift=-0.45cm},
             anchor = east,
             nodes near coords, 
             point meta=explicit symbolic,
             every node near coord/.append style={font=\footnotesize}]      table[x=cpuZMKN,y=errWrefZMKN, meta index=11] {data/blobs_hrom_errvsCPU_N500r4.txt}; 
        %%\addplot+[mark=square,color=ForestGreen] table[x=cpuZT,y=errWrefZT] {data/blobs_hrom_errvsCPU_N500r4.txt};
        \addplot+[mark=star,color=red] table[x=cpuYMKN,y=errWrefYMKN] {data/blobs_hrom_errvsCPU_N500r4.txt};
        %%\addplot+[mark=square,color=red] table[x=cpuYT,y=errWrefYT] {data/blobs_hrom_errvsCPU_N500r4.txt};
        %\addplot+[mark=*,color=blue] table[x=cpuW,y=errWrefW] {data/blobs_hrom_errvsCPU_N500r4.txt};
        %\legend{TRec($4,m$) with RKMK-2, Rec($r$) with RKMK-2};
        \legend{TRec({$4,\ap{N}$}) RKMK-2, Rec($4$) RKMK-2};
    \end{axis}
\end{tikzpicture}
\caption{Error of the rank-$r$ approximate solution from \eqref{eq:US0} and error of the rank-$r$ approximate solution from the reconstruction equation with truncation \eqref{eq:apUS0} versus the algorithm runtime, for different values of the truncation size $\ap{N}$.}
%The Zeitlin model requires $27334$ s (to achieve the error of $5.55\rm{e}{-}2$ and it is thus not reported.}
\label{fig:blob_hrom_errvscpu}
\end{figure}

\Cref{fig:blob_hrom_errvscpu} reports the error at the final time vs. the algorithm runtime of the rank-$4$ approximation with and without the truncation.
It can be observed that the rank-$4$ reconstruction equation with truncation is computationally cheaper than solving the rank-$4$ reconstruction equation without truncation but at the cost of a decrease in accuracy when $\ap{N}<75$. For a truncation size $\ap{N}$ equal to $75$ or larger the rank-$4$ approximation with truncation achieves the error of the rank-$4$ approximation, and, for $\ap{N}=75$, at less than half of the computational cost.
The Zeitlin model requires $27334$s %(around $7.6$ hours)
to achieve an error of $5.55\rm{e}{-}2$, and it is not reported in the figure. This means that it is roughly 17 times more expensive than the truncated low-rank pproximation with $\ap{N}=75$ and has even a slightly larger error.

In \Cref{fig:blobs_errHam}, on the left, we report the error in the conservation of the Hamiltonian. 
Exact conservation of the Hamiltonian is not expected from the proposed numerical time integration schemes. We observe that the reference solution has a smaller error than the other trajectories owing to a smaller time step.
The low-rank approximations have a similar behavior to the full-rank solution of the Zeitlin model with a slightly lower error.
On the right plot of \Cref{fig:blobs_errHam}, the evolution of the error between the Hamiltonian evaluated at the reference solution and at the low-rank approximation with truncation is reported. Different truncation sizes $\ap{N}$ are considered. As predicted by the theory, the truncated model is Hamiltonian with a Hamiltonian $\ap{H}$ \eqref{eq:apHam} that is an approximation of the original one. As $\ap{N}$ increases, the error in the approximation of the Hamiltonian decreases, and for $\ap{N}>75$ the truncation is no longer affecting the accuracy of the approximation.

\begin{figure}[H]
\centering
\begin{tikzpicture}
    \begin{groupplot}[
      group style={group size=2 by 1,
                  horizontal sep=1.7cm},
      width=6.4cm, height=5.7cm
    ]
    \nextgroupplot[ylabel={$\frac{N}{4\pi}|H(\Wref(t_0))-H(\Omega(t))|$},
                  xlabel={Time $t$},
                  axis line style = thick,
                  grid=both,
                  minor tick num=1,
                  grid style = {gray,opacity=0.2},
                  ymode=log,
                  ymax = 1e-5, ymin=1e-20,
                  ytick={1e-20,1e-17,1e-14,1e-11,1e-8,1e-5},
                  xlabel style={font=\footnotesize},
                  ylabel style={font=\footnotesize},
                  x tick label style={font=\footnotesize},
                  y tick label style={font=\footnotesize},
                  legend style={font=\scriptsize},
                  legend columns = 1,
                  legend style={at={(0.575,0.5)},anchor=north},
                  legend cell align=left]
        \addplot+[mark=*,color=blue,mark repeat=11] table[x=time,y=consHWdt1] {data/blobs_consHamvstime_N500r4.txt};
        \addplot+[mark=star,color=red,mark repeat=11] table[x=time,y=consHYMKNdt1] {data/blobs_consHamvstime_N500r4.txt};
        \addplot+[mark=square,color=ForestGreen,mark repeat=11] table[x=time,y=consHZMKNdt1m75] {data/blobs_consHamvstime_N500r4.txt};
        \addplot+[mark=diamond*,color=black,mark repeat=11] table[x=time,y=consHWref] {data/blobs_consHamvstime_N500r4.txt};
        \legend{Zeitlin $\Iso$, Rec($4$) RKMK-2,TRec{$(4,75)$} RKMK-2, $\Wref$};
        \nextgroupplot[ylabel={$\frac{N}{4\pi}|H(\Wref(t))-H(Z(t))|$},
                  xlabel={Time $t$},
                  axis line style = thick,
                  grid=both,
                  minor tick num=0,
                  grid style = {gray,opacity=0.2},
                  minor tick num=1,
                  ymode = log,
                  ymin=1e-16,
                  ytick={1e-16,1e-13,1e-10,1e-7,1e-4,1e-1},
                  xlabel style={font=\footnotesize},
                  ylabel style={font=\footnotesize},
                  x tick label style={font=\footnotesize},
                  y tick label style={font=\footnotesize},
                  legend style={font=\scriptsize},
                  legend columns = 1,
                  legend style={at={(0.8,0.55)},anchor=north},
                  legend cell align=left]
        \addplot+[mark=diamond,color=violet,mark repeat=10] table[x=time,y=errHZMKNdt1m4] {data/blobs_hrom_errHamvstime_N500r4.txt};
        \addplot+[mark=triangle,color=orange,mark repeat=10] table[x=time,y=errHZMKNdt1m50] {data/blobs_hrom_errHamvstime_N500r4.txt};
        \addplot+[mark=square,color=ForestGreen,mark repeat=10] table[x=time,y=errHZMKNdt1m75] {data/blobs_hrom_errHamvstime_N500r4.txt};
        \addplot+[mark=o,color=NavyBlue,mark repeat=10] table[x=time,y=errHZMKNdt1m100] {data/blobs_hrom_errHamvstime_N500r4.txt};
        \addplot+[mark=pentagon,color=magenta,mark repeat=10] table[x=time,y=errHZMKNdt1m250] {data/blobs_hrom_errHamvstime_N500r4.txt};
         \legend{$\ap{N}=4$,$\ap{N}=50$,$\ap{N}=75$,$\ap{N}=100$,$\ap{N}=250$};
    \end{groupplot}
\end{tikzpicture}
\caption{
Left: Conservation of the Hamiltonian.
Right: evolution of the error between the Hamiltonian evaluated at the reference solution and the Hamiltonian evaluated at the low-rank approximate solution of \eqref{eq:meqt_hypred} for different values of the truncation size $\ap{N}$.
}\label{fig:blobs_errHam}
\end{figure}
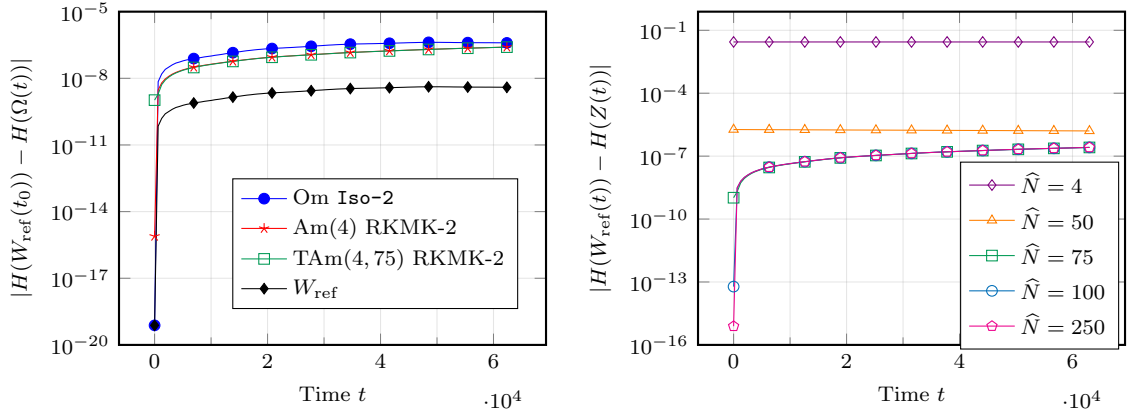

The conservation of the Casimir functions is numerically studied in \Cref{fig:blobsEIG}, where we plot the evolution of the $\ell^{\infty}$ error between the eigenvalues of the solution at each time and of the initial reference solution.
While some numerical errors are affecting the solution of the Zeitlin model, the low-rank approximations preserve the $4$ dominant eigenvalues to almost machine precision.

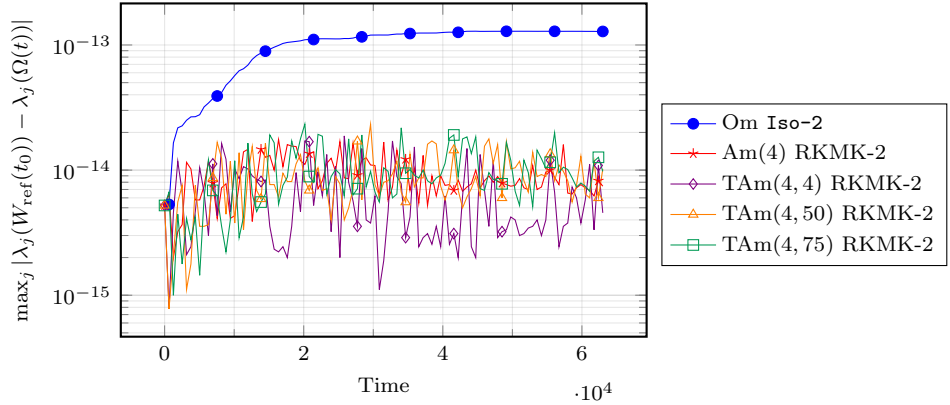
\begin{figure}[H]
\centering
\begin{tikzpicture}
    \begin{axis}[width=8.5cm, height=5.7cm,
                  ylabel={$\max_j |\lambda_j(\Wref(t_0))-\lambda_j(\Omega(t))|$},
                  xlabel={Time},
                  axis line style = thick,
                  grid=both,
                  minor tick num=0,
                  grid style = {gray,opacity=0.2},
                  minor tick num=1,
                  ymode = log,
                  xlabel style={font=\footnotesize},
                  ylabel style={font=\footnotesize},
                  x tick label style={font=\footnotesize},
                  y tick label style={font=\footnotesize},
                  legend style={font=\scriptsize},
                  legend style={at={(1.3,0.7)},anchor=north},
                  legend cell align=left]
        \addplot+[mark=*,color=blue,mark repeat=11] table[x=time,y=eW2] {data/blobs_errEIGvstime_N500r4.txt};
        \addplot+[mark=star,color=red,mark repeat=11] table[x=time,y=eYMKN2] {data/blobs_errEIGvstime_N500r4.txt};
        \addplot+[mark=diamond,color=violet,mark repeat=11] table[x=time,y=eZMKNA] {data/blobs_errEIGvstime_N500r4.txt};
        \addplot+[mark=triangle,color=orange,mark repeat=11] table[x=time,y=eZMKNB] {data/blobs_errEIGvstime_N500r4.txt};
        \addplot+[mark=square,color=ForestGreen,mark repeat=11] table[x=time,y=eZMKNC] {data/blobs_errEIGvstime_N500r4.txt};
        %\addplot+[mark=square,color=ForestGreen] table[x=time,y=eZMKND] {data/blobs_errEIGvstime_N500r4.txt};
        %\addplot+[mark=square,color=ForestGreen] table[x=time,y=eZMKNE] {data/blobs_errEIGvstime_N500r4.txt};
        \legend{Zeitlin $\Iso$, Rec($4$) RKMK-2, TRec{$(4,4)$} RKMK-2,TRec{$(4,50)$} RKMK-2,TRec{$(4,75)$} RKMK-2,TRec{$(4,100)$} RKMK-2,TRec{$(4,250)$ RKMK-2}};
    \end{axis}
\end{tikzpicture}
\caption{Error between the eigenvalues of the low-rank approximate solution at final time and the eigenvalues of the reference solution at initial time vs. the rank $r$ of the approximation.}\label{fig:blobsEIG}
\end{figure}

As a final test, we consider the low-rank approximation with the factorization proposed in \Cref{sec:splitting}, namely where the factor $S$ is time-dependent. The problem parameters are as described above.
In the temporal splitting of \Cref{sec:splitting}, the evolution equation for $U$ in \eqref{eq:U} is solved using a RK-MK integrator of order $2$, while the evolution of $S$ in \eqref{eq:S} is solved with $\Iso$.
In \Cref{table:St} we compare the performances of the Zeitlin model, low-rank approximation \eqref{eq:fact} with fixed factor $S_0$, and the low-rank approximation \eqref{eq:factSt} with time-dependent $S$.
We observe that the low-rank approximation with splitting is slightly more accurate than the other models, both in the solution and in the Hamiltonian conservation. Compared to the factorization with $S_0$ fixed, the model with time-dependent $S$ is computationally considerably more expensive (by almost 4.5 times), although it is cheaper than solving the Zeitlin model \eqref{eq:meqt}.
This suggests that the factorization \eqref{eq:factSt} together with the temporal splitting of \Cref{sec:splitting} is a valid low-rank approximation and it is best suited for flows on matrix manifolds that are not isospectral.

\renewcommand{\arraystretch}{1.75}
\begin{table}[H]
\footnotesize
\caption{Comparison of the Zeitlin model \eqref{eq:meqt}, the low-rank approximation with factorization \eqref{eq:fact} and the low-rank approximation \eqref{eq:factSt} with time-dependent $S$.}
\label{table:St}
\begin{center}
    \begin{tabular}{c|c|c|c}
         & Zeitlin with $\Iso$  & Rec($4$) with $S_0$ & Rec($4$) with $S(t)$ \\ \hline
        $\normF{\Wref(T)-\Omega(T)}$ & 5.55\rm{e}{-}2 & 1.29\rm{e}{-}2 & 5.30\rm{e}{-}3 \\ \hline
        $\frac{N}{4\pi}\max_{\tind}|H(\Wref(t_0))-H(\Omega(t_{\tind}))|$ & 4.15\rm{e}{-}7 & 2.57\rm{e}{-}7 & 3.21\rm{e}{-}8 \\ \hline
        Runtime [s] & 27334 & 4331 & 19322
    \end{tabular}
\end{center}
\end{table}

%%%%%%%%%%%%%%%%%%%%%%%%%%%%%%%%%%%%%%%%%%%%
\section{Concluding remarks}
\label{sec:concl}
We have proposed a low-rank approximation of the Zeitlin model that provides a finite-dimensional description of the incompressible Euler equations on the sphere.
Two factorizations of the vorticity matrix have been considered: one based on a eigendecomposition where only the basis of eigenvectors depends on time, and a second one where all factors are time-dependent.
Despite having both structure-preserving and favorable approximability properties, the first turns out to be more suitable for isospectral flow, while the second factorization might be used for other type of Euler--Arnold equations. Extensive numerical experiments on the latter are left for future work.
%%%%%%%%%%%%%%%%%%%%%%%%%%%%%%%%

\section*{Acknowledgments}
The author would like to thank Klas Modin for inspiring discussions on various aspects of the Zeitlin model.
Several discussions with Arnout Franken and Erwin Luesink in a preliminary stage of this work, and with Milo Viviani are also gratefully acknowledged.

\printbibliography

\end{document}